\DeclareMathAlphabet{\mathpzc}{OT1}{pzc}{m}{it}
\newcommand{\bb}[1]{\mathbb{#1}}
\newcommand{\pard}[2]{\frac{\partial #1}{\partial #2}}
\newcommand{\ho}{\left(\frac{d}{dt} -\Delta \right)}
\newcommand{\ddt}[1]{\frac{ d #1}{dt}}
\newcommand{\ip}[2]{\left \langle #1 , #2 \right\rangle}
\newcommand{\n}{\nabla}
\newcommand{\diam}{\operatorname{diam}}
\newcommand{\R}{\mathbb{R}}
\newcommand{\e}{\epsilon}
\renewcommand{\l}{\lambda}
\renewcommand{\a}{\alpha}
\newcommand{\ra}{\rightarrow}
\newcommand{\ov}{\overline}
\renewcommand{\t}{\theta}
\renewcommand{\div}{\text{div}}
\newcommand{\T}{\Theta}
\newcommand{\on}[1]{{\operatorname{#1}}}
\begin{document}
\theoremstyle{plain}
\newtheorem{theorem}{Theorem}
\newtheorem{lemma}[theorem]{Lemma}
\newtheorem{claim}[theorem]{Claim}
\newtheorem{proposition}[theorem]{Proposition}
\newtheorem{cor}[theorem]{Corollary}
\theoremstyle{definition}
\newtheorem{defses}{Definition}
\newtheorem{example}{Example}
\newtheorem{assumption}{Assumption}
\theoremstyle{remark}
\newtheorem{remark}{Remark}
\title{Nonlocal estimates for the Volume Preserving Mean Curvature Flow and applications}
\author{Ben Lambert}
\email{b.s.lambert@leeds.ac.uk}
\address{ School of Mathematics, University of Leeds, Leeds, LS2 9JT, United Kingdom}
\author{Elena M\"ader-Baumdicker}
\email{maeder-baumdicker@mathematik.tu-darmstadt.de}
\address{Fachbereich Mathematik, Schlossgartenstr. 7, 64289 Darmstadt, Germany}
\begin{abstract}
We obtain estimates on nonlocal quantities appearing in the Volume Preserving Mean Curvature Flow (VPMCF) in the closed, Euclidean setting. As a result we demonstrate that blowups of finite time singularities of VPMCF are ancient solutions to Mean Curvature Flow (MCF), prove that monotonicity methods may always be applied at finite times and obtain information on the asymptotics of the flow.
\end{abstract}
\maketitle

\section{Introduction}

A key question for the study of Volume Preserving Mean Curvature Flow (VPMCF) is the extent to which it has similar properties to the Mean Curvature Flow (MCF). It is now known that the extra nonlocal term in VPMCF causes significant differences.  For example, Cabezas-Rivas and Miquel \cite{EstherCounterexamples} prove that key properties used in many MCF methods do not hold for VPMCF, such as preservation of mean convexity (along with most other curvature conditions bar convexity). This demonstrates the need to a VPMCF specific methods, and in this paper we begin to develop some of these. In this paper, we give evidence that there are still significant links between singularities of these two flows: We show that blowups of type I finite time singularities of VPMCF are indeed ancient solutions to MCF (compare with \cite[point (c), p288]{EstherCounterexamples}). Finite time type~II singularities of the VPMCF look like eternal solutions of MCF after a suitable parabolic blowup procedure (first used by Hamilton~\cite{Hamilton}).  This is particularly interesting in the light of the significant progress has been made in recent years in the classification of such singularities - see for example \cite{AngenentDaskalopoulosSesum}\cite{BrendleChoi}\cite{ChoiHaslhoferHershkovits}\cite{ChoiHaslhoferHershkovitsWhite}.\\[-0.2cm]

In order to study these blowup solutions we needed to get control over the (nonlocal) Lagrange parameter in the equation. We found links between $L^2$-bounds on the (nonlocal) averaged mean curvature and the extrinsic diameter along the flow. These estimates play an important role in the monotonicity formula, and essentially allow initial results on the analysis of singularities to be proven without assuming any further bounds on $\ov{H}$. \\[-0.2cm]

The VPMCF of $n$ dimensional hypersurfaces in $\bb{R}^{n+1}$ is the $L^2$-gradient flow of the area functional under the constraint that the $(n+1)$-dimensional enclosed volume is fixed.
This flow was introduced by Gage~\cite{Gage} for curves and by Huisken~\cite{HuiskenConvexVPMCF} for hypersurfaces. We consider the closed setting, meaning that $M^n$ is a smooth, orientable, compact $n$-dimensional manifold without boundary. Then a smooth family of immersions $X:M^n\times[0,T)\ra \bb{R}^{n+1}$ is a solution of the VPMCF if 
\begin{equation*}
\left\{
\begin{aligned}
\ddt{X}& = -(H-\ov{H})\nu\ , &&\text{ on } M^n \times [0,T),\\
X(\cdot, t) &= X_0 \hspace{2,1cm} \ &&\text{ on } M^n, 
\end{aligned}\right.
\end{equation*}
where $X_0: M^n \to \bb{R}^{n+1}$ is a given immersion. Here, $\nu$ is a unit normal along $X(\cdot, t) =:M_t$,  $h_{ij} = - \langle \partial_i\partial_jX,\nu\rangle$ are the local coefficients of the second fundamental form $A$, $g_{ij} = \ip{X_i}{X_j}$ is the induced metric and $H = \sum g^{ij}h_{ij}$ is the mean curvature. Furthermore, $\ov{H} =\frac{\int H d\mu_t}{|M_t|}$ is the Lagrange parameter of the above mentioned geometric variational problem. Thus, $\frac{d}{dt}|M_t|\leq 0$ and $\frac{d}{dt}V(t)=0$ is satisfied along the flow (for the precise definition of the enclosed volume $V(t)$ see Section~\ref{sec:Notation}). Note that the formulation of the VPMCF does not depend on the choice of the unit normal. If it is possible to choose between an ``inner'' and ``outer'' unit normal with respect to $M_t$, then we choose the outer unit normal, so that $H>0$ for round spheres.\\[-0.2cm]

We collect some known results about the VPMCF. Gage (for $n=1$) and Huisken (for $n\geq 2$) proved that convex solutions of the VPMCF stay convex and do not develop singularities. They converge smoothly to a round sphere enclosing the same amount of volume as the initial hypersurface. In~\cite{EscherSimonett}, Escher and Simonett proved that hypersurfaces being $h^{1+\beta}$ close to a round sphere converge to (possibly another) round sphere with the same enclosed volume as the initial surface. Li obtained related results in \cite{Li}. For example, he proved convergence to a round sphere if the integrated trace-free second fundamental form $\overset{\circ}A_{ij} = h_{ij} - \frac{H}{n}g_{ij}$ of the initial surface is small enough in $L^2$. The needed smallness depends on  $\max|A|(0)$ , $\ov{H}(0)>0$ and $V(0)>0$, see \cite[Theorem~1]{Li}. 
Further results about the VPMCF in a non-Euclidean ambient manifold can be found for example in \cite{HuiskenYau, EckerHuisken, Alikakos, Rigger, Cabezas07, Cabezas09}.\\[-0.2cm]

Athanassenas, later with Kandanaarachchi, studied in \cite{Atha97, Atha03, AthaKanda12} the VPMCF of a rotational symmetric surface with and without boundary conditions. For example, in \cite{AthaKanda12} it is shown that a height bound on the generating curve for the rotational symmetric surfaces prevents the flow from developing a singularity. The hypersurfaces then converge to a sphere (or a half-sphere in the considered boundary setting). Note that rotational symmetry (additionally to convexity) is one the of few properties that is known to be preserved under the VPMCF.\\[-0.2cm]

For curves, we expect stronger results compared to the hypersurface case because the main part of the Lagrange parameter has a geometric interpretation -- the term $\int \kappa ds$ is scaling invariant and measures the turn of the angle of a curve. In the closed setting, this is just $2\pi m$ with $m\in\mathbb{N}$ (the index of a closed curve). So an $L^\infty$-in-time bound on $\ov{\kappa} = \frac{\int \kappa ds_t}{L_t}$ is immediate if one controls the length from below (e.g.\ by an isoperimetric inequality). Inspired by \cite{Chou}, Escher and Ito proved in \cite{EscherIto} the existence of singularities of the VPMCF for curves under some conditions on the initial curve. One scenario of the initial curve leading to a singularity is that the enclosed volume of the initial curve with $\frac{1}{2\pi}\int \kappa ds \geq 1$ is negative. Inspired by \cite{EscherIto}, the second author proved in \cite{Elena18} an analogous result for curves in the (volume preserving) free boundary setting. In this setting, $\int \kappa ds$ is not bounded a priori, but still, an $L^\infty$-in-time proven earlier in \cite{Elena15} is used in the proof of existence of singularities. In \cite{Elena15}, the second author had introduced the (Neumann) free boundary setting for the VPMCF in detail (for curves and surfaces) and proved convergence to a stationary solution for (non-closed) curves under some conditions on the initial curve. The most advanced result about closed curves can be found in \cite{Dittberner}. There, Dittberner was able to derive a comparison principle between the intrinsic and extrinsic distance -- based on previous work of Huisken~\cite{HuiskenComparison} for the MCF -- to show convergence to a sphere of a closed curve under the condition that the initial curve is embedded and satisfies $\int_{x}^y \kappa ds \geq -\pi$ for all $x,y$. This can be seen as the analogue of Grayson's theorem -- which says that the MCF first makes every embedded, closed curve convex and then shrinks it to a point while becoming more and more round, see \cite{Grayson, GageHamilton}. Without assuming $\int_{x}^y \kappa ds \geq -\pi$ for all $x,y$ we cannot expect that such a strong result holds because there is an example, already suggested by Gage \cite{Gage} and also studied by Dittberner \cite{Dittberner} where this condition is violated and a self-intersections develops after starting the VPMCF. Numerical computations in fact indicate that then a singularity appears for this initial curve, see Mayer~\cite{Mayer}. For curves, it was also shown recently, that a star-shaped, centrosymmetric set keeps this properties and converges to a round sphere, see~\cite{Gaoetal}.\\[-0.2cm]

Whether star-shapedness is perserved in general seems to be an open questions. In the work of Kim and Kwon certain approximating solutions of the VPMCF are considered. Using them, it is proven that a strong version of star-shapedness is preserved \cite{KimKwon}. In  \cite{Mugnaietal}, Mugnai, Seis and Spadaro presented a distributional formulation of the VPMCF using the setting of Caccioppoli sets. They show global-in-time existence of their weak flow. A phase field method of the VPMCF was studied for example in the work of Takasao \cite{TakasaoI, TakasaoII}, see also the references therein. In which generality these weak flows or limits of approximating sets can be applied to any initially smoothly immersed surfaces in unclear to the authors. An immersed surface does not need to bound a domain, see also the definition of Alexandrov immersions below. Recently, Laux showed in  in \cite{Laux} that a (strong) solution where the hypersurfaces bound a domain agrees with a distributional solution he defines (as long as the smooth one exists).  \\[-0.2cm]

We now point out the structure of the paper and formulate the main results. Due to the lack of preserved quantities under the VPMCF we need to work in a quite general setting. In Section~\ref{sec:diamL2}, we study the connection between (extrinsic) diameter bounds and $L^2$-estimates of $\ov{H}(t)=\frac{\int H d\mu_t}{|M_t|}$. One of our results is the following:
\begin{center}
\emph{If the initial immersion has non-vanishing enclosed volume $V_0\not=0$, then there \\ are constants $c$ and $C$ only depending on $M_0$ such that
\begin{align*}
 \int_0^t \ov{H}^2 (\tau) d\tau \leq C(1+ t^2) e^{c\sqrt{t}}.
\end{align*}
}
\end{center}
We provide an example of a convex, embedded curve where the diameter is growing for a short time (note the contrast to the classical MCF). In Section~\ref{sec:4}, we re-prove a monotonicity formula for the VPMCF already shown in \cite{ElenaDiss} and study consequences for finite time singularities. In the monotonicity formula it is important that the $L^2$-norm of $\ov{H}$ is appearing in form of $\exp(-\frac{1}{2}\int_0^t\ov{H}^2(\tau) d\tau)$ as a multiplicative factor in front of the usual integrated Gau\ss ian in Huisken's monotonicity formula \cite{HuiskenMono}. Consequently, the $L^2$-control from Section~\ref{sec:diamL2} allows us to get properties of asymptotic flows appearing after suitable parabolic blowups at finite time singularities. One result is:
\begin{center}
\emph{Parabolic blowups of VPMCF about finite time singularities of type I ($\max|A|^2 \leq \frac{C}{T-t}$) produce ancient, homothetically shrinking  solutions of MCF. Finite type II blowups are eternal solutions of the MCF after Hamilton's parabolic rescaling.}
\end{center}
In Subsection~\ref{subsec:clearingout}, we illustrate that an $L^\infty$-bound on $\ov{H}$ in fact implies that the diameter of $|M_t|$ along the VPMCF always stays uniformly bounded for all times. \\[-0.2cm]

Section~\ref{sec:longtime} contains statements about the VPMCF with infinite life-span and uniform diameter bound. We introduce a new ``extended isoperimetric ratio'' by
\begin{align*}
 \mathcal{I}(M_t): = \frac{n+1}{n}\ov{H} \frac{V(t)}{|M_t|}.
\end{align*}
Note that $\mathcal{I}$ of the round sphere is equal to one. 
We motivate this definition with the Alexandrov-Fenchel inequalities proven for $k$-convex, star-shaped domains in \cite{GuanLiQuermass}. The main result of Section~\ref{sec:longtime} is the following:
\begin{center}
\emph{If $V_0\not =0$ and $M_t$ satisfies a uniform diameter bound along the VPMCF with $T=\infty$, then, for each $t_i\to\infty$, there exists a subsequence (not relabeled) such that \\ either $\mathcal{I}(M_{t_i})\to 0$ or $\mathcal{I}(M_{t_i})\to 1$. }
\end{center}
As a corollary of the  Alexandrov-Fenchel inequalities \cite{GuanLiQuermass} we get that
\begin{center}
\emph{Any mean convex, star-shaped solution of the VPMCF with uniformly bounded curvature exists for all times and converges to a round sphere.}
\end{center}
In Section~\ref{sec:Alex}, we remind the reader of the notion of an \emph{Alexandrov immersion}, which is -- roughly speaking -- an $n$-dimensional immersion that bounds an $(n+1)$-dimensional manifold that is immersed in $\mathbb{R}^{n+1}$ (see Definition~\ref{Def:Alexim} for the precise definition). Alexandrov immersions bound an $(n+1)$-dimensional ``domain'' that is allowed to have a certain kind of self-overlaps. The definition goes back to Alexandrov~\cite{Alexandrov} in his work about closed surfaces of constant mean curvature in Euclidean space\footnote{Note that being Alexandrov immersed is not called like this in \cite{Alexandrov}, of course. Also, the property that the $(n+1)$-dimensional domain is \emph{immersed} in $\mathbb{R}^{n+1}$ is missing in this paper (there, only the expression \emph{smooth mapping} is used). But it is meant to be part of the definition as it is used in the proof.}. It has also been been used successfully for example by Brendle in his work about minimal tori in $\mathbb{S}^3$, see \cite{BrendleAlex}. Using maximum principle arguments in a ``one-sided situation'' we prove the following result:
\begin{center}
\emph{The property of being Alexandrov immersed is preserved under the VPMCF as long as $\ov{H}\geq 0$ (using the outer unit normal for the definition of $\ov{H}$). }
\end{center}
Unfortunately, we found an (even embedded) example where $\ov{H}\geq 0$ is lost along the VPMCF. We explain this example in Appendix~\ref{appendix1}.

\section*{Acknowledgments}

We would like to thank Karsten Gro\ss e-Brauckmann for discussions about Alexandrov immersions and for his interest in our work. The second author is supported by the DFG (MA 7559/1-2) and thanks the DFG for the support.

\section{Notation}\label{sec:Notation}
We complement the definitions from the introduction with the formula of the signed volume on the immersion $M_t$
%
%
%
 \[V(t):= \on{Vol}(M_t) = \frac 1 {n+1}\int_{M_t} \ip{X}{\nu} d\mu.\]
The defining property of VPMCF is that it preserves this volume, that is, 
\[\on{Vol}^{n+1}(M_t)=\on{Vol}^{n+1}(M_0)=:V_0\ .\]
In particular if $\Omega_t\subset \bb{R}^{n+1}$ is a domain with smooth boundary $\partial \Omega_t=M_t$, then the divergence theorem implies that $\on{Vol}^{n+1}(M_t)=\mathcal{L}^{n+1}(\Omega_t)$ where $\mathcal{L}^{n+1}$ is the $(n+1)$--dimensional Lebesgue measure.

We will regularly use the \emph{extrinsic} diameter of the immersion $M_t$ defined by
\begin{equation}
\operatorname{diam}(M_t):=\operatorname{diam}_{\on{ext}}(M_t) = \max_{(x,y)\in M^n\times M^n}|X(x,t) - X(y,t)|\ .\label{eq:diamdef}
\end{equation}
This is distinct to the intrinsic metric diameter of $M_t$, $\on{diam}_{\on{int}}(M_t)$ where we have the inequality $\diam(M_t)\leq \on{diam}_{\on{int}}(M_t)$.

\section{Diameter bounds and $L^2$ bounds of the Lagrange parameter}\label{sec:diamL2}
\begin{lemma} \label{lemma:diffdiam}
 Let $X:M^n\times [0,T) \to\R^{n+1}$ be a closed VPMCF for $t\in[0,T)$. Then the extrinsic diameter (as in equation \eqref{eq:diamdef}) satisfies
 \begin{align*}
  \frac{d}{dt} \diam (M_t) = \langle -(H(x,t) -\ov{H}(t)) \nu(x,t) + (H(y,t) - \ov{ H }(t)) \nu(y,t), \tfrac{X(x,t) - X(y,t)}{|X(x,t) - X(y,t)|}\rangle
 \end{align*}
for almost every $t\in [0,T)$, where $(x,y)\in M^n\times M^n$ are points where the maximum of the diameter is attained.
\end{lemma}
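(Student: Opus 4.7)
The plan is to recognize $\diam(M_t)$ as the maximum of a smooth family of functions and apply a standard envelope/Danskin-type argument. Define
\[
 f(x,y,t) := |X(x,t) - X(y,t)|,
\]
so that $\diam(M_t) = \max_{(x,y) \in M^n \times M^n} f(x,y,t)$. Away from the diagonal $\{x=y\}$ this is smooth, and for a closed immersed $M^n$ we have $\diam(M_t) > 0$, so all maximizers lie in the smooth region of $f$.

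First I would show that $t \mapsto \diam(M_t)$ is locally Lipschitz on $[0,T)$. Since $X$ is smooth and $M^n$ is compact, on any $[0,t_1] \subset [0,T)$ the velocity $|\partial_t X| = |H - \ov{H}|$ is uniformly bounded. Given $t, s \in [0,t_1]$, for any maximizer $(x_s, y_s)$ at time $s$,
\[
 \diam(M_s) - \diam(M_t) \leq f(x_s, y_s, s) - f(x_s, y_s, t) \leq 2 \sup_{[0,t_1] \times M^n} |\partial_t X|\cdot|s-t|,
\]
and a symmetric bound holds. Therefore by Rademacher's theorem $\frac{d}{dt}\diam(M_t)$ exists for a.e.\ $t \in [0,T)$.

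Next I would fix such a differentiability time $t$ and a maximizing pair $(x,y)$, and derive the formula by a two-sided envelope argument. For the lower bound, since $f(x,y,t+h) \leq \diam(M_{t+h})$,
\[
 \liminf_{h \to 0}\frac{\diam(M_{t+h})-\diam(M_t)}{h}\ \geq \ \partial_t f(x,y,t),
\]
(using the signed limit as $h \to 0^+$ and $h \to 0^-$ respectively, applied to the appropriate inequality). For the matching upper bound, pick maximizers $(x_h,y_h)$ at time $t+h$; by compactness of $M^n\times M^n$, along a subsequence $(x_h,y_h)\to (x^*,y^*)$, which is necessarily a maximizer at time $t$. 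Then
\[
 \diam(M_{t+h})-\diam(M_t) \leq f(x_h,y_h,t+h) - f(x_h,y_h,t),
\]
and smoothness of $f$ near $(x^*,y^*,t)$ yields $\limsup \leq \partial_t f(x^*,y^*,t)$. Since $\diam(M_t)$ is differentiable at $t$, the liminf and limsup must coincide; moreover the one-sided inequality above holds for \emph{any} maximizer, so the derivative equals $\partial_t f(x,y,t)$ for every maximizer $(x,y)$.

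Finally I would compute $\partial_t f(x,y,t)$ directly. Since $X(x,t) \neq X(y,t)$,
\[
 \partial_t f(x,y,t) = \left\langle \partial_t X(x,t) - \partial_t X(y,t),\ \tfrac{X(x,t)-X(y,t)}{|X(x,t)-X(y,t)|} \right\rangle,
\]
and substituting the VPMCF equation $\partial_t X = -(H - \ov{H})\nu$ yields exactly the formula claimed. The only mildly delicate step is the Danskin envelope argument in the presence of possibly multiple maximizers, but the compactness of $M^n\times M^n$ together with upper semicontinuity of the maximizer set reduces it to the standard argument; as a side remark, the first-order condition at an interior maximum forces $X(x,t)-X(y,t)$ to be parallel to both $\nu(x,t)$ and $\nu(y,t)$, which makes the stated right-hand side well behaved but is not needed for the derivation itself.
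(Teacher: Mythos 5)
Your proof is correct and takes the same route as the paper: the paper invokes ``Hamilton's trick'' (citing Mantegazza) for the Lipschitz--Rademacher--Danskin envelope argument, and you have simply written out that argument in full before doing the same direct computation of $\partial_t f$.
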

\begin{proof}
 The Lipschitz continuous map $t\mapsto \operatorname{diam}(M_t)$ can be differentiated for almost every $t$ using Hamilton's trick (see e.g. \cite[Lemma 2.1.3]{Mantegazza}), which implies
 \begin{align*}
   \frac{d}{dt} \diam (M_t) & = \frac{1}{|X(x,t) - X(y,t)|} \langle \partial_t X(x,t) - \partial_t X(y,t), X(x,t) - X(y,t) \rangle\\
   &= \langle -(H(x) -\ov{H}) \nu(x) + (H(y) - \bar H) \nu(y), \tfrac{X(x) - X(y)}{|X(x) - X(y)|}\rangle,
 \end{align*}
where $(x,y)\in M^n\times M^n$ are points where the maximum of the diameter is attained. 
\end{proof}

 \begin{lemma} \label{lemma:derivativeDiam}
   Let $X:M^n\times [0,T) \to\R^{n+1}$ be a closed VPMCF for $t\in[0,T)$. Then the extrinsic diameter satisfies
   \begin{align*}
    \frac{d}{dt} \diam (M_t) & \leq \ov{H} \left[\langle \nu^x,  \tfrac{X(x,t) - X(y,t)}{|X(x,t) - X(y,t)|}\rangle - \langle \nu^y, \tfrac{X(x,t) - X(y,t)}{|X(x,t) - X(y,t)|}\rangle\right] - \frac{4n}{\diam(M_t)}\\
    &\leq 2 |\ov{H}| - \frac{4n}{\diam(M_t)}
   \end{align*}
for almost every $t\in [0,T)$.
 \end{lemma}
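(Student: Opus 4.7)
Starting from Lemma~\ref{lemma:diffdiam}, set $v := \tfrac{X(x,t)-X(y,t)}{|X(x,t)-X(y,t)|}$ and expand the inner product to split off the nonlocal term:
\begin{equation*}
\frac{d}{dt}\diam(M_t) \;=\; \ov{H}\bigl[\langle \nu^x, v\rangle - \langle \nu^y, v\rangle\bigr] \;-\; \bigl[H(x)\langle \nu^x, v\rangle - H(y)\langle \nu^y, v\rangle\bigr].
\end{equation*}
The first bracket is already the expression appearing in the statement, so the task reduces to showing $H(x)\langle \nu^x, v\rangle - H(y)\langle \nu^y, v\rangle \geq 4n/\diam(M_t)$.

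The plan is to exploit the necessary conditions at the maximum of $F(p,q):=|X(p,t)-X(q,t)|^2$ at $(x,y)$. The first-order conditions force $X(x,t)-X(y,t)$ to be normal to $M_t$ at both endpoints, so $\nu^x = \sigma_x v$ and $\nu^y = \sigma_y v$ for some signs $\sigma_x,\sigma_y\in\{\pm 1\}$, and, crucially, $T_xM_t = T_yM_t = v^\perp$ as hyperplanes in $\R^{n+1}$. For the second-order step, the idea yielding the sharp factor $4n$ (rather than $2n$) is a \emph{coupled} variation: fix $W \in v^\perp$, set $p(s) := \exp_x(sW)$ and $q(s) := \exp_y(-sW)$ (geodesics in $M_t$ with its induced metric), and use that $s \mapsto F(p(s),q(s))$ still attains a maximum at $s=0$. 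The Gauss formula gives $(X\circ p)''(0)=-A_x(W,W)\nu^x$ and $(X\circ q)''(0)=-A_y(W,W)\nu^y$, while $U(s) := X(p(s)) - X(q(s))$ satisfies $U'(0) = 2W$ and $U(0) = \sigma_x\diam(M_t)\nu^x$. A direct computation then yields
\begin{equation*}
0 \;\geq\; \tfrac{d^2}{ds^2}\bigl|_{s=0}\, F(p(s),q(s)) \;=\; 8|W|^2 \;-\; 2\,\diam(M_t)\bigl[\sigma_x A_x(W,W) - \sigma_y A_y(W,W)\bigr].
\end{equation*}
Tracing the resulting pointwise inequality $\sigma_x A_x(W,W) - \sigma_y A_y(W,W) \geq 4|W|^2/\diam(M_t)$ over an orthonormal basis of the common tangent space $v^\perp$ gives $\sigma_x H(x) - \sigma_y H(y) \geq 4n/\diam(M_t)$, which is the desired bound since $\langle \nu^x,v\rangle = \sigma_x$ and $\langle \nu^y,v\rangle = \sigma_y$. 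The second inequality in the lemma is then immediate from $|\sigma_x - \sigma_y| \leq 2$.

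The main subtlety is recognising the need for the coupled variation. Varying only in $p$ with $q=y$ fixed (or vice versa) gives merely $\sigma_x H(x) \geq n/\diam(M_t)$ and $-\sigma_y H(y) \geq n/\diam(M_t)$, which sum to a factor $2n$ that is not sharp. Using the same $W$ at $x$ and $-W$ at $y$ exploits the coincidence $T_xM_t = T_yM_t$ to double the contribution $|U'(0)|^2 = 4|W|^2$, producing the sharp constant $4n$ (attained with equality on round spheres, where $H=n/R$ and $\diam = 2R$).
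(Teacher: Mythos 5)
Your proposal is correct and follows essentially the same route as the paper: both arguments exploit the second-order maximum condition for the squared distance in a \emph{coupled} direction on $M^n\times M^n$, which the paper writes as $\nabla^2_{e_i^x-e_i^y,\,e_i^x-e_i^y}\varphi\leq 0$ and you write as the second variation of $s\mapsto |X(\exp_x(sW))-X(\exp_y(-sW))|^2$ at $s=0$; tracing over an orthonormal basis of the common tangent hyperplane then gives the sharp constant $4n$ in both cases. Your remark that uncoupled variations lose half the constant correctly identifies the key point of the computation.
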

 
 \begin{proof}
  We consider the squared distance function $\varphi: M^n \times M^n \times [0,T) \ra \bb{R}$, $\varphi(x,y,t): = |X(x,t) - X(y,t)|^2$. We write $(z_1, \ldots, z_{2n}) := (x_1, \ldots, x_n ,y_1, \ldots, y_n)$ if needed. We leave out the time dependence in the notation from now on. We calculate the second derivatives of $\varphi$. But we only want to use them for $x$ and $y$ such that the maximum in $\diam(M_t)^2= \max_{(x,y)\in M^n\times M^n}|X(x,t) - X(y,t)|^2$ is attained. Thus, we compute
\[\partial_{x_i}\varphi = 2\ip{\partial_{x_i} X(x)}{X(x)-X(y)}, \qquad \partial_{y_i}\varphi = -2\ip{\partial_{y_i}X(y)}{X(x)-X(y)}\]
\begin{flalign*}
\n^2_{x^i x^j}\varphi &= -2h_{ij}(x) \ip{\nu(x)}{X(x)-X(y)}+2g_{ij}(x)\\
\n^2_{x^i y^j}\varphi &=-2\ip{\partial_{x_i} X (x)}{\partial_{y_j}X(y)}\\
\n^2_{y^i y^j}\varphi &= 2h_{ij}(y) \ip{\nu(y)}{X(x)-X(y)}+2g_{ij}(y)
\end{flalign*}
At almost every time $t$, $\diam(M_t)^2$ is differentiable, and by Hamilton's trick, we know that
\[\ddt{} \diam(M_t)^2 = 2 \langle -(H(x) -\ov{H}) \nu(x) + (H(y) - \bar H) \nu(y), {X(x) - X(y)}\rangle\]
for $x$ and $y$ attaining the maximum. At such points we have that $0=\partial_{x_i}\varphi = \partial_{y_i}\varphi$ and  $0\geq \n^2_{z^iz^j} \varphi$.  This particularly implies that $X(x) -X(y)$ is normal to the surface in $x$ and $y$, so the tangent spaces at $x$ and $y$ are parallel and we may take local orthonormal coordinates so that $\partial_{x_i} X(x)=\partial_{y_i}X(y)=e_i$. Furthermore, 
\[0\geq \sum_i \n^2_{e_i^x-e_i^y,e_i^x-e_i^y}\varphi = -2 \ip{\nu(x)}{X(x)-X(y)}H(x)+2\ip{\nu(y)}{X(x)-X(y)}H(y) +8n\]
which implies
\[\ddt{}\diam^2(M_t)\leq 2\ov{H} [\ip{\nu(x)}{X(x)-X(y)}-\ip{\nu(y)}{X(x)-X(y)}]-8n\]
or alternatively
\begin{align}\begin{split}\label{eq:ineqdt}
 \ddt{}\diam(M_t)&\leq \ov{H} \left[\ip{\nu(x)}{\frac{X(x)-X(y)}{|X(x)-X(y)|}}-\ip{\nu(y)}{\frac{X(x)-X(y)}{|X(x)-X(y)|}}\right]-\frac{4n}{\diam(M_t)}\\
&=:2\left(\ov{H} \sigma-\frac{2n}{\diam (M_t)}\right)\\
&\leq 2\left(|\ov{H}|-\frac{2n}{\diam (M_t)}\right) \end{split}
\end{align}
as stated in the lemma. Here, $\sigma \in \{1,0,-1\}$ depends on the configuration of normals and positions. 
 \end{proof}
 
 \begin{remark}
  Lemma~\ref{lemma:derivativeDiam} implies that the diameter along the VPMCF is bounded on every bounded time interval $[0,T)$, $T<\infty$, if one is able to show a bound $\sup_{t\in[0,T)}|\ov{H}(t)| \leq c$. Unfortunately, such a bound is hard to get in a very general setting. See Proposition \ref{prop:uniformdiambound} for a weakening of this condition.
 \end{remark}

\begin{remark}
The above calculation is to some extent optimal in bounding diameter, even in the case of convex curves. Indeed, even in the convex setting, the nonlocal term can force the diameter to increase initially, before converging back to a circle. See example \ref{ex:convexcurve} for details.
\end{remark}

  \begin{defses}\label{Def:Alexim}
An immersion $X:M^n\ra \bb{R}^{n+1}$ is \emph{Alexandrov immersed} if there exists an $(n+1)$-dimensional manifold $\ov{\Omega}$ with $\partial \ov{\Omega} = M^n$ and an immersion $G:\ov{\Omega} \ra \bb{R}^{n+1}$ such that $G|_{\partial \ov{\Omega}}$ also parametrises $\on{Im}(X)$. 
\end{defses}

\begin{remark}
 For an Alexandrov immersion, there is obviously a natural notion of an an \emph{inner unit normal $\tilde\nu$} (the one where $G^\ast(\tilde\nu)$ shows into $\Omega$) and an \emph{outer unit normal $-\tilde \nu$}.
\end{remark}

 \begin{cor}\label{cor:Alexdiam}
  Let $X:M^n\times [0,T) \to\R^{n+1}$ be a closed VPMCF for $t\in[0,T)$ that is an Alexandrov immersion for all $t\in [0,T)$. Then we choose $\nu$ to be the outer unit normal. In this case 
  \begin{align*}
  \ip{\nu(x)}{\frac{X(x)-X(y)}{|X(x)-X(y)|}}-\ip{\nu(y)}{\frac{X(x)-X(y)}{|X(x)-X(y)|}} = 2
  \end{align*}
 for $x(t)$ and $y(t)$ such that the maximum in the definition of the diameter is attained. As a consequence, we have that
 \begin{align*}
   \ddt{}\diam(M_t) \leq  2\left(\ov{H} - \frac{2n}{\diam(M_t)}\right)
 \end{align*}
 for almost every $t\in [0,T)$.
 \end{cor}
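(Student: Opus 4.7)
The plan is to reduce Corollary~\ref{cor:Alexdiam} to the vectorial identity $\nu(x) = w$ and $\nu(y) = -w$ at any pair $(x,y)$ realising $\diam(M_t)$, where $w := \tfrac{X(x)-X(y)}{|X(x)-X(y)|}$. Given these two equalities, the bracketed expression in the first inequality of Lemma~\ref{lemma:derivativeDiam} collapses to $2$, and the stated differential inequality follows by direct substitution. From the proof of Lemma~\ref{lemma:derivativeDiam} we already know that $X(x)-X(y)$ is normal to $M_t$ at both $x$ and $y$, so $\nu(x), \nu(y) \in \{+w,-w\}$; all that remains is to determine the signs, and this is where the Alexandrov structure $G : \overline\Omega \to \R^{n+1}$ together with the outer-normal convention enters.

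The main step, and the main obstacle, is to show $G(\overline\Omega) \subset \overline{B(X(y), \diam(M_t))}$. For this I consider the smooth function $\phi : \overline\Omega \to \R$, $\phi(p) := |G(p) - X(y)|^{2}$. At any interior critical point $p \in \Omega$, criticality forces $DG(p)^{T}(G(p) - X(y)) = 0$; since $G$ is an immersion between equidimensional manifolds, $DG(p)$ is an isomorphism, so $G(p) = X(y)$ and therefore $\phi(p) = 0$. Because $\phi(x) = \diam(M_t)^{2} > 0$ (the degenerate case $\diam(M_t)=0$ being trivial), the maximum of $\phi$ over the compact manifold $\overline\Omega$ must be attained on the boundary and equals $\diam(M_t)^{2}$, giving the containment. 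It is precisely here that one needs $G$ to be an immersion on the entire $(n+1)$-dimensional manifold $\overline\Omega$ and not merely on $\partial\overline\Omega$; without this the dimension count would fail.

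With the containment in hand the signs are easy. Because $-\nu(x)$ is the inner unit normal at $X(x)$ and $G$ is a local diffeomorphism near $x$, for all small $\epsilon > 0$ one has $X(x) - \epsilon\nu(x) \in G(\Omega) \subset G(\overline\Omega)$. The ball containment therefore yields
\[
\diam(M_t)^{2} - 2\epsilon\langle \nu(x),\, X(x)-X(y)\rangle + \epsilon^{2} \;=\; |X(x) - \epsilon\nu(x) - X(y)|^{2} \;\leq\; \diam(M_t)^{2},
\]
and dividing by $\epsilon > 0$ and sending $\epsilon \to 0^{+}$ gives $\langle \nu(x),\, X(x) - X(y)\rangle \geq 0$. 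Combined with $\nu(x) = \pm w$ this forces $\nu(x) = w$. The symmetric argument with $x$ and $y$ interchanged yields $\nu(y) = -w$, and substituting the resulting value $2$ for $\sigma$ into the first estimate of Lemma~\ref{lemma:derivativeDiam} completes the proof.
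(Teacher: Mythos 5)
Your proof is correct and follows the same geometric idea as the paper, which gives the argument in a single sentence ("since there are no points further from $X(y)$ than $X(x)$, the outer normal must agree with $\tfrac{X(x)-X(y)}{|X(x)-X(y)|}$"). What you add is the missing justification: the containment $G(\overline\Omega)\subset\overline{B(X(y),\diam(M_t))}$, proved via the observation that an interior critical point of $|G(\cdot)-X(y)|^2$ forces $G(p)=X(y)$ because $DG(p)$ is invertible, followed by the perturbation in the inner normal direction to pin down the sign. This is a genuine filling-in of the paper's terse reasoning rather than a different route; the one cosmetic slip is that in the last sentence the quantity you substitute the value $2$ for is the bracketed expression in Lemma~\ref{lemma:derivativeDiam}, not the symbol $\sigma$ (which there equals $1$), but this does not affect the conclusion.
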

 
 \begin{proof}
  Since there are no points $X(\tilde x)$ that are further apart from $X(y)$ in $\R^{n+1}$ than $X(x)$, the outer unit normal $\nu(x)$ must agree with $\frac{X(x)-X(y)}{|X(x)-X(y)|}$. For $X(y)$, an analogous argument works. 
 \end{proof}

 \begin{cor} \label{cor:Alexhleq0}
   Let $X:M^n\times [0,T) \to\R^{n+1}$ be a closed VPMCF that is Alexandrov-immersed for all $t\in [0,T_{\text{max}})$ and $\ov{H}\leq 0$ on $t\in [0,T_{\text{max}})$, where $\ov{H}$ is computed with respect to the outer unit normal.  Then $T_{\text{max}}\leq \frac{\diam(M_0)^2}{8n}$.
 \end{cor}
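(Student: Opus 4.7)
The plan is to directly combine the differential inequality from Corollary~\ref{cor:Alexdiam} with the sign hypothesis on $\ov{H}$ to obtain a strict decay of $\diam(M_t)^2$, which forces a finite terminal time.

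First, I would invoke Corollary~\ref{cor:Alexdiam} to assert that, since $M_t$ is Alexandrov immersed and we use the outer unit normal, for a.e.\ $t \in [0,T_{\max})$
\[
\ddt{}\diam(M_t) \leq 2\left(\ov{H} - \frac{2n}{\diam(M_t)}\right).
\]
Using the hypothesis $\ov{H}\leq 0$, the first term can be dropped, yielding
\[
\ddt{}\diam(M_t) \leq -\frac{4n}{\diam(M_t)}.
\]
Multiplying both sides by $2\diam(M_t)>0$ and recognising the left side as $\ddt{}\diam(M_t)^2$, I obtain
\[
\ddt{}\diam(M_t)^2 \leq -8n
\]
for a.e.\ $t\in[0,T_{\max})$.

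Since $t\mapsto \diam(M_t)$ is Lipschitz (so $\diam(M_t)^2$ is absolutely continuous on compact subintervals of $[0,T_{\max})$), integrating this differential inequality from $0$ to $t$ gives
\[
\diam(M_t)^2 \leq \diam(M_0)^2 - 8nt.
\]
Because $\diam(M_t)^2 \geq 0$ as long as the flow exists, this forces $t \leq \frac{\diam(M_0)^2}{8n}$ throughout the interval of existence, and hence $T_{\max}\leq \frac{\diam(M_0)^2}{8n}$.

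There is essentially no obstacle: the key work has already been done in Lemma~\ref{lemma:derivativeDiam} and Corollary~\ref{cor:Alexdiam}. The only mild technical point is to justify passing from the a.e.\ differential inequality to the integrated bound, which follows from the Lipschitz continuity of $t\mapsto \diam(M_t)$ established via Hamilton's trick in the preceding lemmata.
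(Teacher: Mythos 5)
Your proof is correct and follows exactly the same route as the paper's: invoke Corollary~\ref{cor:Alexdiam}, drop the $\ov{H}$ term using the sign hypothesis, integrate the resulting differential inequality for $\diam(M_t)^2$, and conclude from positivity of the diameter. The only difference is that you spell out the Lipschitz/absolute-continuity justification for the integration step, which the paper leaves implicit.
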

 
 \begin{proof}
  The assumption $\ov{H}\leq 0$ and Corollary~\ref{cor:Alexdiam} imply
\begin{align*}
 \ddt{}\diam(M_t) \leq -\frac{4n}{\diam(M_t)}, \ \ \ \ t\in [0,T_{\text{max}})
\end{align*}
which is equivalent to 
\begin{align*}
 \diam(M_t)^2 \leq \diam(M_0)^2 - 8nt \ \ \text{for } t\in [0,T_{\text{max}})
\end{align*}
via integration. So we have that
\begin{align*}
\limsup_{t \to T_{\text{max}}} \diam(M_t)^2 \leq \diam(M_0)^2 - 8nT_{\text{max}}
\end{align*}
which leads to a contradiction for $T_{\text{max}}> \frac{\diam(M_0)^2}{8n}$.
 \end{proof}
 
 \begin{remark}
  This above statement should be interpreted in the following way: If we know the flow exists for quite some time \emph{and stays Alexandrov immersed}, then $\ov{H} \leq 0$ is not preserved. For more on this see also Theorem \ref{thm:asymptotics}.
 \end{remark}

\begin{lemma}
Let $X:M^n\times [0,T) \to\R^{n+1}$ be a VPMCF with enclosed volume $V_0 \not =0$. Then we can express $\ov{H}$ as
\begin{align}\label{eq:barH}
 \ov{H} = \frac{\int_M\ip{\partial_t X}{X-x}d\mu+n|M_t|}{(n+1)V_0}
\end{align}
for any $x=x(t) \in \R^{n+1}$.
\end{lemma}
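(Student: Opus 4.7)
The plan is to substitute the VPMCF equation $\partial_t X = -(H-\ov{H})\nu$ into the numerator on the right hand side and simplify using two classical identities. Splitting the inner product, I would first rewrite
\[\int_M \ip{\partial_t X}{X-x}d\mu = -\int_M H\ip{\nu}{X-x}d\mu + \ov{H}\int_M \ip{\nu}{X-x}d\mu,\]
since $\ov{H}$ is a spatial constant and may be pulled out of the integral. It is then enough to evaluate the two integrals on the right.

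For the second of these, I would use translation invariance of the signed volume: for any fixed $x\in\R^{n+1}$,
\[\int_M \ip{X-x}{\nu}d\mu = \int_M \ip{X}{\nu}d\mu - \ip{x}{\int_M \nu\, d\mu} = (n+1)V_0,\]
because $\int_M \nu\, d\mu=0$ for any closed orientable immersion into $\R^{n+1}$. The vanishing of $\int_M \ip{c}{\nu}d\mu$ for every constant $c$ follows from Stokes' theorem, viewing $\ip{c}{\nu}d\mu$ as the pullback to $M$ of the closed $n$-form $c \lrcorner d\on{vol}$ on $\R^{n+1}$.

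For the first integral, I would use the identity $\Delta_M X = -H\nu$ (a direct consequence of the Gauss formula together with the sign convention $h_{ij}=-\ip{\partial_i\partial_j X}{\nu}$ fixed in the paper) to rewrite
\[\int_M H\ip{\nu}{X-x}d\mu = -\int_M \ip{\Delta_M X}{X-x}d\mu.\]
Integration by parts on the closed manifold $M$ gives $\int_M g^{ij}\ip{\partial_i X}{\partial_j(X-x)}d\mu$; since $x$ does not depend on the point on $M$, $\partial_j(X-x)=\partial_j X$, and the integrand reduces to $g^{ij}g_{ij}=n$. Hence this integral equals $n|M_t|$.

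Substituting both identities back, the right hand side becomes
\[\int_M \ip{\partial_t X}{X-x}d\mu + n|M_t| = -n|M_t| + \ov{H}(n+1)V_0 + n|M_t| = \ov{H}(n+1)V_0,\]
and dividing by $(n+1)V_0$ (nonzero by hypothesis) yields the claimed expression for $\ov{H}$. The only delicate point is keeping track of signs under the paper's convention for $h_{ij}$; once $\Delta_M X = -H\nu$ is in hand, both key identities are a single integration by parts each, and time-dependence of $x$ plays no role since the identity is pointwise in $t$.
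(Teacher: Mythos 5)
Your proof is correct and follows essentially the same route as the paper: both hinge on the identity $\int_M H\ip{\nu}{X-x}\,d\mu = n|M_t|$ (you derive it from $\Delta_M X=-H\nu$ plus one integration by parts, the paper from $\Delta|X-x|^2 = -2H\ip{\nu}{X-x}+2n$ plus the divergence theorem — the same computation packaged differently) and on the translation-invariance $\int_M\ip{X-x}{\nu}\,d\mu=(n+1)V_0$, which you justify via $\int_M\nu\,d\mu=0$ while the paper simply asserts it. The sign bookkeeping under $h_{ij}=-\ip{\partial_i\partial_jX}{\nu}$ is handled correctly.
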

\begin{proof}
 We have that $\Delta |X-x|^2= -2H\ip{\nu}{X-x}+2n$ and so by the divergence theorem
\begin{equation}
|M_t| = \frac 1 n\int_M H\ip{\nu}{X-x}d\mu\ .\label{eq:HsuppisArea}
\end{equation}
The formula for the signed volume enclosed by $M_t$ reads $V(t) =\frac{1}{n+1}\int_{M_t}\ip{X-x}{\nu}d\mu$ for any $x=x(t)\in \R^{n+1}$.
We compute
\[\int_M\ip{X_t}{X-x}d\mu = \ov{H}\int_M \ip{\nu}{X-x}d\mu -\int_M H\ip{\nu}{X-x}d\mu\]
and so
\[\ov{H} = \frac{\int_M\ip{\partial_t X}{X-x}d\mu+\int_M H\ip{\nu}{X-x}d\mu}{\int_M \ip{\nu}{X-x}d\mu}=\frac{\int_M\ip{\partial_t X}{X-x}d\mu+n|M_t|}{(n+1)V_0}\]
using the volume preserving property. 
\end{proof}

\begin{proposition}\label{prop:ElenasArgument2}
Suppose that $(M_t)_{t\in[0,T)}$ satisfies  $V_0\neq 0$ and $\text{diam}(M_t)<R (t)$ with $R(s) \leq R(t)$ for $s<t\leq T$. Then we have for $t\in [0,T)$,
\[\int_0^t \ov{H}^2(\tau) d\tau \leq \frac{ 2|M_0|^2}{V_0^2(n+1)^2}(R(t)^2+n^2t) \]
\end{proposition}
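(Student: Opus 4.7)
The plan is to combine the formula for $\ov{H}$ in the previous lemma with the classical identity $\int_{M_t}(H-\ov{H})^2 d\mu_t = -\frac{d}{dt}|M_t|$ (valid for VPMCF), which makes the right-hand side telescope nicely when integrated in time.

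First I would pick some fixed $x_0 \in M^n$ and set $x = x(t) := X(x_0,t)$, so that $|X(y,t) - x(t)| \leq \diam(M_t) < R(t)$ for every $y \in M^n$. Substituting $\partial_t X = -(H-\ov{H})\nu$ into the formula \eqref{eq:barH} from the previous lemma gives
\[
\ov{H}(n+1)V_0 - n|M_t| \;=\; -\int_{M_t}(H-\ov{H})\,\ip{\nu}{X-x}\,d\mu_t.
\]
Cauchy--Schwarz and $|\ip{\nu}{X-x}|\leq |X-x|\leq R(t)$ then yield
\[
\bigl(\ov{H}(n+1)V_0 - n|M_t|\bigr)^2 \;\leq\; R(t)^2\,|M_t|\int_{M_t}(H-\ov{H})^2\,d\mu_t.
\]

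Next I would recall that on a VPMCF $\partial_t d\mu_t = -H(H-\ov{H})\,d\mu_t$ and $\int_{M_t}\ov{H}(H-\ov{H})\,d\mu_t=0$, so
\[
\int_{M_t}(H-\ov{H})^2\,d\mu_t \;=\; -\tfrac{d}{dt}|M_t|.
\]
Using the elementary inequality $(a+b)^2\leq 2a^2+2b^2$ with $a=\ov{H}(n+1)V_0 - n|M_t|$ and $b=n|M_t|$, we obtain the pointwise-in-time bound
\[
\ov{H}^2(n+1)^2 V_0^2 \;\leq\; 2n^2|M_t|^2 \;+\; 2R(t)^2\,|M_t|\bigl(-\tfrac{d}{dt}|M_t|\bigr).
\]

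Finally I would integrate this inequality from $0$ to $t$. Because $|M_\tau|$ is non-increasing under VPMCF we have $|M_\tau|\leq |M_0|$, giving $2n^2\int_0^t|M_\tau|^2\,d\tau \leq 2n^2 t\,|M_0|^2$. For the second term, the monotonicity hypothesis $R(\tau)\leq R(t)$ for $\tau\leq t$ lets me pull $R(t)^2$ out of the integral, after which the remaining integrand is an exact derivative:
\[
2R(t)^2\int_0^t |M_\tau|\bigl(-\tfrac{d}{d\tau}|M_\tau|\bigr)\,d\tau \;=\; R(t)^2\bigl(|M_0|^2-|M_t|^2\bigr)\;\leq\; R(t)^2 |M_0|^2.
\]
Combining and dividing by $(n+1)^2V_0^2$ produces the claimed estimate (in fact even a slightly sharper version, with $R(t)^2|M_0|^2 + 2n^2 t|M_0|^2$, which is majorised by $2|M_0|^2(R(t)^2+n^2 t)$). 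There is no real obstacle once one spots the two key inputs — the choice of the free point $x$ as a point on the surface, and the telescoping identity for $\int(H-\ov{H})^2$; everything else is Cauchy--Schwarz and bookkeeping.
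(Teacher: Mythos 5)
Your argument is correct and follows essentially the same route as the paper: both start from formula \eqref{eq:barH}, apply Cauchy--Schwarz with $|X-x|\le R(t)$, invoke $\int_{M_t}(H-\ov H)^2\,d\mu=-\frac{d}{dt}|M_t|$, use $(a+b)^2\le 2a^2+2b^2$, and integrate in time. The only differences are cosmetic — you take $x(t)$ on the surface rather than as a center of a containing ball, and you keep $|M_\tau|$ inside the telescoping integral, which gives a marginally sharper constant that is still dominated by the stated bound.
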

\begin{proof}
First we choose $x(t)\in \R^{n+1}$ to be any point such that $M_t\subset B_R(x(t))$.
We have that 
\[\ddt{}|M_t| = -\int_M H(H-\ov{H})d\mu=-\int_M (H-\ov{H})^2d\mu = -\int_M|\partial_t X|^2d\mu.\]
Formula (\ref{eq:barH}) implies
 \begin{align*}
  |\ov{H}| \leq \frac{1}{(n+1)|V_0|}\left(\sqrt{\int_M|\partial_t X|^2d\mu}\sqrt{\int_M|X-x|^2d\mu}+n|M_0|\right).
 \end{align*}
and thus
\[\ov{H}^2\leq \frac 2 {(n+1)^2V_0^2}\left(-\ddt{}|M_t|\int_M |X-x|^2d\mu +n^2|M_0|^2\right)\ .\]
Estimating $|X-x|^2\big|_{s}\leq \text{diam}^2M_s \leq R^2(s) \leq R^2(t)$ for $s<t$ and using $|M_s|\leq |M_0|$ again,  we know now
\begin{align*}
 \int_0^t\ov{H}^2ds \leq \frac{2}{V_0^2(n+1)^2}|M_0|^2(R^2(t)+ n^2t)\ .
\end{align*}

\end{proof}
 
 \begin{proposition}\label{prop:dtdiam}
  Let $X:M^n\times [0,T) \to\R^{n+1}$ be a closed VPMCF for $t\in[0,T)$ with enclosed volume $V_0\not =0$. Then there are constants $c,C$ only depending on $M_0$ such that 
  \begin{align*}
   \diam(M_t) \leq C(1+t) e^{c\sqrt{t}}
  \end{align*}
  for $t\in [0,T)$

 \end{proposition}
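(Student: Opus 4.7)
The plan is to reduce the bound to a linear Gr\"onwall inequality in $\diam(M_t)$ with an integrable time-dependent coefficient. The starting point is Lemma~\ref{lemma:derivativeDiam}, which already gives $\ddt{}\diam(M_t)\leq 2|\ov{H}(t)|$ almost everywhere. To turn the nonlocal $\ov H$ into something quantitative, I will use formula~\eqref{eq:barH} with the \emph{moving} base point $x(t):=X(x_0,t)$ for a fixed $x_0\in M^n$, so that $|X(y,t)-x(t)|\leq \diam(M_t)$ for every $y\in M^n$.

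First, I would apply Cauchy--Schwarz to the numerator in \eqref{eq:barH}, bounding the first term by $\sqrt{B(t)}\,\diam(M_t)\sqrt{|M_t|}$, where $B(t):=\int_M |\partial_t X|^2 d\mu = -\ddt{}|M_t|\geq 0$. Since $|M_t|\leq |M_0|$, this yields the pointwise control
\[|\ov{H}(t)|\leq \frac{\sqrt{|M_0|\,B(t)}\,\diam(M_t)+n|M_0|}{(n+1)|V_0|}.\]
Combining with Lemma~\ref{lemma:derivativeDiam} gives a linear differential inequality
\[\ddt{}\diam(M_t)\leq a(t)\,\diam(M_t)+b,\qquad a(t):=\tfrac{2\sqrt{|M_0|\,B(t)}}{(n+1)|V_0|},\quad b:=\tfrac{2n|M_0|}{(n+1)|V_0|},\]
valid for almost every $t$; since $t\mapsto\diam(M_t)$ is Lipschitz and $a(t)$ is integrable, Gr\"onwall applies and gives $\diam(M_t)\leq (\diam(M_0)+bt)\exp(\int_0^t a(\tau)\,d\tau)$.

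The last step is to control $\int_0^t a(\tau)d\tau$ by another Cauchy--Schwarz, this time in the time variable: $\int_0^t \sqrt{B(\tau)}d\tau \leq \sqrt{t \int_0^t B(\tau)d\tau}= \sqrt{t(|M_0|-|M_t|)}\leq \sqrt{t\,|M_0|}$, using that area is non-increasing along VPMCF with total dissipation at most $|M_0|$. This produces $\int_0^t a(\tau)d\tau\leq c\sqrt{t}$ with $c:=\tfrac{2|M_0|}{(n+1)|V_0|}$, and taking $C:=\max\{\diam(M_0),b\}$ yields the claimed form $C(1+t)e^{c\sqrt{t}}$.

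The hard part is that $\ov H$ admits no general pointwise bound in terms of purely geometric data at time $t$. The idea I am relying on is that while $\ov H$ cannot be controlled pointwise, its square has a dissipation-type upper bound via $B(t)=-\ddt{}|M_t|$, and the time integral of $\sqrt{B}$ is tame (of order $\sqrt{t}$) precisely because the area is bounded. This is exactly what delivers the $e^{c\sqrt{t}}$ rather than an $e^{ct}$ or worse. I would also verify carefully that the a.e.\ differential inequality is sufficient to invoke Gr\"onwall, which is standard for Lipschitz functions with locally integrable coefficients.
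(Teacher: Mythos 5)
Your proof is correct and follows essentially the same route as the paper: bound $\ddt{}\diam$ by $2|\ov H|$, control $|\ov H|$ via formula~\eqref{eq:barH} with a base point in $M_t$ and Cauchy--Schwarz to get $|\ov H|\leq c_1+c_2\,\diam(M_t)\sqrt{-\ddt{}|M_t|}$, then close the estimate with a Cauchy--Schwarz in time giving $\int_0^t\sqrt{-\ddt{}|M_s|}\,ds\leq\sqrt{t\,|M_0|}$. The paper writes this as an explicit integrating factor $\psi(t)=\exp(-2c_2\int_0^t\sqrt{-\ddt{}|M_s|}\,ds)$ rather than invoking Gr\"onwall by name, but the two are the same computation.
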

 
 \begin{proof}
We have 
\[\ddt{}\diam(M_t)\leq 2|\ov{H}|-\frac{4 n}{\diam (M_t)}\leq 2|\ov{H}|\]
and from (\ref{eq:barH})
\begin{align*}
 |\ov{H}| & \leq \frac{1}{(n+1)|V_0|}\left(\sqrt{-\ddt{}|M_t|}\sqrt{|M_0|} \diam(M_t)+n|M_0|\right)\\
 &: = c_1 + c_2 \diam(M_t)\sqrt{-\ddt{}|M_t|}.
\end{align*}
We pick 
\begin{align}\label{eq:psi}
 \log \psi = -2c_2\int_0^t \sqrt{-\ddt{}|M_t|}\geq -2c_2 \sqrt{t} (|M_0|-|M_t|)^{\frac{1}{2}} \geq -2c_2 \sqrt{t} |M_0|^{\frac{1}{2}}
\end{align}
so that
\[\frac{1}{\psi}\ddt{}(\diam(M_t) \psi) = \ddt{}\diam(M_t)-2c_2 \diam(M_t)\sqrt{-\ddt{}|M_t|}\leq 2c_1\]
and thus (using that $\psi\leq 1$ and (\ref{eq:psi}))
\begin{align*}
 \diam(M_t) &\leq \frac{1}{\psi(t)}\diam(M_0)+2c_1\frac{1}{\psi(t)}\int_0^t\psi(\tau) d\tau\\
 &\leq \left(\diam(M_0) + 2c_1 t\right) \frac{1}{\psi(t)} \\
 &\leq C(1+t) e^{2c_2|M_0|^{\frac{1}{2}}\sqrt{t}}.
\end{align*}

\end{proof}
 
\begin{cor}\label{cor:L2Hbound}
  Let $X:M^n\times [0,T) \to\R^{n+1}$ be a closed VPMCF for $t\in[0,T)$ with enclosed volume $V_0\not =0$. Then there are a constants $c,C>0$ only depending on $n$, $V_0$ and $M_0$ such that 
\[\int_0^t \ov{H}^2(s) ds \leq C (1+t^2) e^{c\sqrt{t}}. \]
\end{cor}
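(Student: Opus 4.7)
The plan is a direct combination of Proposition~\ref{prop:dtdiam} and Proposition~\ref{prop:ElenasArgument2}. Proposition~\ref{prop:dtdiam} already provides the required extrinsic diameter bound
\[
\diam(M_t) \leq C_1(1+t)e^{c_1\sqrt{t}}
\]
with $C_1,c_1$ depending only on $n$, $V_0$, and $M_0$ (via $|M_0|$ and $V_0$ in the constants $c_1,c_2$ of the proof of Proposition~\ref{prop:dtdiam}). First I would observe that the right-hand side, as a function of $t$, is monotonically nondecreasing, so setting $R(t):=C_1(1+t)e^{c_1\sqrt{t}}+\varepsilon$ for arbitrary $\varepsilon>0$ produces an admissible choice in Proposition~\ref{prop:ElenasArgument2} (the strict inequality $\diam(M_t)<R(t)$ holds and $R$ is monotone).

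Next I would feed this $R$ into Proposition~\ref{prop:ElenasArgument2} to obtain
\[
\int_0^t \ov{H}^2(\tau)\,d\tau \leq \frac{2|M_0|^2}{V_0^2(n+1)^2}\Bigl(\bigl(C_1(1+t)e^{c_1\sqrt{t}}+\varepsilon\bigr)^2+n^2 t\Bigr),
\]
and then let $\varepsilon\to 0$. Absorbing all the $M_0$-, $V_0$-, $n$-dependence into a single constant and using the crude bound $n^2 t \leq n^2(1+t^2)$ yields
\[
\int_0^t \ov{H}^2(\tau)\,d\tau \leq C(1+t^2)e^{c\sqrt{t}}
\]
for constants $C,c>0$ depending only on $n$, $V_0$, $M_0$, with $c=2c_1$.

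There is essentially no obstacle here: the corollary is a bookkeeping step that chains the two previous propositions. The only minor points to verify are the monotonicity of $R(t)$ (immediate, since both $(1+t)$ and $e^{c_1\sqrt{t}}$ are nondecreasing) and the correct tracking of constant dependencies to confirm that $C$ and $c$ ultimately depend only on $n$, $V_0$, and $M_0$, which is clear from the constructions in Propositions~\ref{prop:ElenasArgument2} and~\ref{prop:dtdiam}.
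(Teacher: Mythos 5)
Your proposal is correct and matches the paper's proof, which is just the one-line instruction to combine Propositions~\ref{prop:ElenasArgument2} and~\ref{prop:dtdiam}; you have simply spelled out the bookkeeping (the strictness/monotonicity of $R$, squaring the diameter bound, absorbing constants) that the paper leaves implicit.
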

\begin{proof}
 Put together Proposition~\ref{prop:ElenasArgument2} and \ref{prop:dtdiam}.
\end{proof}
\begin{remark}
\begin{enumerate}
 \item  In general, we can not expect to have a bound of the form $\int_0^t \ov{H}^2(s) ds \leq C$, where $C$ is independent of $t$ because round spheres are (stationary) solutions of the VPMCF, and they satisfy $\int_0^t \ov{H}^2(s) ds = c^2 t$. We do not know whether the $e^{\sqrt{t}}$-growth can be improved (e.g.\ to a linear growth in $t$) in this very general setting.
 \item Integrability conditions of a Lagrange multiplier was successfully used earlier, see for example the work of Rupp~\cite{Rupp} on the volume preserving Willmore flow. But there, the fixed-enclosed-volume constraint is of lower order compared to the order of the flow, therefore Rupp was able to get strong results with the integrability.
\end{enumerate}

\end{remark}

\subsection{Optimality of diameter bounds}
In this subsection we consider a curve $\gamma:\mathbb{S}^1 \to \mathbb{R}^2$, where $\partial_s$ is differentiation with respect to arclength and $\nu = -J \partial_s \gamma$ is the rotation of the tangent $\partial_s\gamma$ by $\frac{\pi}{2}$ in negative direction. In order to be consistent with the definitions for surfaces, we use the definition $\kappa = -\langle \partial_s^2 \gamma, \nu\rangle$.  The \emph{area preserving curve shortening flow} reads
\begin{align*}
 \partial_t \gamma = -(\kappa -\ov{\kappa})\nu
\end{align*}
for $\ov{\kappa} : = \frac{\int\kappa ds}{L(\gamma_t)}$. Here, $ds = |\partial_x \gamma|dx$ denotes integration with respect to arclength and $L(\gamma_t)$ is the length of $\gamma_t: =\gamma(\cdot,t)$ for the family $\gamma:\mathbb{S}^1\times[0,T) \to \R^2$. Of course, as $\gamma$ is closed and  $\int\kappa ds$ is the turn of the tangents, we have that $\ov{\kappa} =  \frac{2\pi m}{L(\gamma_t)}$. If $\gamma_0$ is embedded, then $m=1$.\\

On a general curve we have that 
\begin{align}\label{cond1}
 \kappa_0(x) = \kappa_0(y), \ \ \frac{\gamma_0(x) -\gamma_0(y)}{|\gamma_0(x) -\gamma_0(y)|} = \nu_0(x), \ \ \frac{\gamma_0(x) -\gamma_0(y)}{|\gamma_0(x) -\gamma_0(y)|} = - \nu_0(y),
\end{align}
where $x, y \in \mathbb{S}^1$ are such that the maximum in $\diam (\gamma_0)$ is attained. Note that for a positively oriented  curve $\gamma_0$, $\nu_0$ is the outer unit normal. The proof of Lemma~\ref{lemma:derivativeDiam} shows that 
\begin{align*}
  0&\geq -2 \langle \nu_0(x), \gamma_0(x) - \gamma_0(y) \rangle \kappa_0(x) + 2 \langle \nu_0(y),  \gamma_0(x) - \gamma_0(y) \rangle \kappa_0(y) + 8.
\end{align*}
Then (\ref{cond1}) implies that
\begin{align*}
 \langle \nu_0(x), \gamma_0(x) - \gamma_0(y) \rangle &= |\gamma_0(x) -\gamma_0(y)| = \diam(\gamma_0)\\
  \langle \nu_0(y), \gamma_0(x) - \gamma_0(y) \rangle& = -|\gamma_0(x) -\gamma_0(y)| = -\diam(\gamma_0)
\end{align*}
and thus
\begin{align}
 \kappa_0(x) \geq \frac{2}{\diam(\gamma_0)}.\label{eq:diamineq}
\end{align}
This inequality can be interpreted geometrically: The curvature at the points where the maximum is attained cannot be smaller than $\frac{2}{\diam(\gamma_0)}$ -- otherwise the straight line realizing the maximal distance would be attained at other points (tilting that line would increase the distance). Bearing this is mind, we construct a somehow sharp example. 

We now provide an example demonstrating, somewhat counter-intuitively, that even on a convex curve, (which we know will converge to a round circle in infinite time \cite{HuiskenConvexVPMCF}), the diameter will initially increase while the curvature at extremal points initially decreases.

\begin{figure}[h!]
 \begin{tikzpicture}
    \draw (0,4) arc (80:100:2cm);
        \draw (0,0) arc (-80:-100:2cm);

\draw[->] (-1, -0.04) to (2, -0.04);
\draw[->] (-0.34, -0.2) to (-0.34, 4.2);
\draw[out=-10,in=90] (0,4) to (0.2, 3.5);
\draw[out=-90,in=90] (0.2,3.5) to (0.2, 0.5);
\draw[out=-90,in=10] (0.2,0.5) to (0, 0);

\draw[out=190,in=90] (-0.69,4) to (-0.89, 3.5);
\draw[out=-90,in=90] (-0.89,3.5) to (-0.89, 0.5);
\draw[out=-90,in=170] (-0.89,0.5) to (-0.69, 0);

\coordinate[label = right: ${\gamma_0}$] (C) at (0.2,2);
 \coordinate (d) at (-0.34,-0.035);
	  \fill (d) circle (1pt);
	  \coordinate[label= below:${\gamma_0(y)}$] (g) at (-0.34,-0.1);
	   \coordinate (e) at (-0.34,4.035);
	  \fill (e) circle (1pt);
	     \coordinate[label= above:${\gamma_0(x)}$] (h) at (-0.34,4.1);
	   \coordinate (f) at (-0.34,2);
	  \fill (f) circle (1pt);
 \end{tikzpicture}
 \caption{An embedded, convex initial curve for which the diameter increases.}\label{Figdiam}
\end{figure}
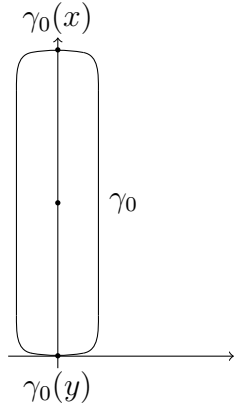
\begin{example}\label{ex:convexcurve} In this example we show that $ \frac{d}{dt} \diam (\gamma_t)\big|_{t=0}>0 $ can occur, even on closed convex curves. We construct an explicit example as in Figure \ref{Figdiam} as follows: First we set $\diam(\gamma_0) = 1$. We put a circle with radius $r: =\frac{1}{2}$ at the midpoint of the line realizing the maximal distance  (w.l.o.g. $\{0\}\times[0,1]\subset\R^2$). So we use very short arcs of the circle at the points $(0,1) = \gamma_0(x)$ and $(0,0) = \gamma_0(y)$. Thus, in $x$ and $y$, the curvature is precisely equal to $2$. We close this curve (smoothly) by straight lines (going from south to north and the other way round at the end of the arcs). The diameter is clearly realised by the points $x$ and $y$.
\end{example}

In Example \ref{ex:convexcurve} we have equality in equation \eqref{eq:diamineq}. With Lemma~\ref{lemma:diffdiam}, we know that
\begin{align*}
  \frac{d}{dt} \diam (\gamma_t)\big|_{t=0} & = \langle -2(\kappa_0(x) - \ov{\kappa}_0)\nu_0(x), \frac{\gamma_0(x) - \gamma_0(y)}{|\gamma_0(x) - \gamma_0(y)|} \rangle 
   = -4 (1- \frac{\pi}{L(\gamma_0)})
\end{align*}
taking into account our situation. The length of $\gamma_0$ can be chosen to be $L(\gamma_0) = 2+\epsilon$. The curve only needs to satisfy $L(\gamma_0)<\pi$ to get an example where $ \frac{d}{dt} \diam (\gamma_t)\big|_{t=0} >0$, in particular $\kappa_0(x) - \ov{\kappa}(0) <0$.

There is something else worth noticing about the above example: The evolution equation of the curvature along the area preserving curve shortening flow is 
   \begin{align*}
    \partial_t \kappa = \partial_s^2 \kappa + (\kappa - \ov{\kappa}) \kappa^2.
   \end{align*}
We use this for our initial curve $\gamma_0$ at the points $x$ and $y$ (where the maximal distance is attained). Since $\kappa \equiv 2$ on a small neighborhood around $x$ and $y$, we know $\partial_s^2 \kappa(x) = 0 = \partial_s^2 \kappa(y)$ and hence 
\begin{align*}
 \partial_t \kappa(x) \big|_{t=0} = \partial_t \kappa(y) \big|_{t=0} = (\kappa_0(x) - \ov{\kappa}(0)) \kappa^2(x)<0.
\end{align*}
This means that, at the points $\gamma_0(x)$ and $\gamma_0(y)$, the curve becomes even a bit flatter, and the diameter grows a bit. This is surprising, because eventually, this curve converges to a round circle. This example shows that the short-time behavior of the flow can be somehow counterintuitive.

\section{Finite time singularities and monotonicity results}\label{sec:4}

\subsection{Monotonicity formula}

We derive a monotonicity formula for VPMCF and demonstrate various corollaries following a similar course to \cite{Eckerbook}. This has been calculated by the second author in \cite{ElenaDiss} already, inspired by \cite{Atha03}. Note that also \cite{TakasaoI} contains a version of a monotonicity formula.

The evolution equation of the area measure is
\[\ddt{}d \mu =-(H-\ov{H})H d \mu\ .\]
We now set $\Phi = \Phi_{(x_0,t_0)}$ the standard ``ambient heat kernel'',  
\[\Phi_{(x_0,t_0)} = \frac{1}{(4\pi(t_0-t))^\frac n2}e^{-\frac{|x-x_0|^2}{4(t_0-t)}}, \ \ x\in \R^{n+1}, t<t_0, \]
where in the following calculation we will suppress the subscript for simplicity of notation. 

\begin{proposition}[Monotonicity formula]\label{prop:Monoton1}
Suppose that $M_t$ satisfies smooth VPMCF. We define
\[\psi(t):=e^{-\frac 12\int_0^t \ov{H}^2(s) ds}\]
and let $\phi$ be a $C^2$-function that is either defined on $M^n \times [0,T)$ or on a neighborhood of $M_t$ in $\R^{n+1}$. Then we have that
\begin{flalign*}
\ddt{} \left( \psi\int_{M_t} \phi\Phi d \mu \right)&=  \psi\int_{M_t} \Phi\ho \phi d \mu +\psi\int_{\partial M_t} \Phi \n_\mu \phi -\phi \n_\mu \Phi d \mu^\partial\\
&\qquad -\frac{\psi}{2}\int_{M_t}\left[\left|(H-\ov{H})-\frac{\ip{\nu}{x-x_0}}{2(t_0-t)}\right|^2+\left| H-\frac{\ip{\nu}{x-x_0}}{2(t_0-t)}\right|^2 \right]\phi\Phi d \mu\ .
\end{flalign*}

\end{proposition}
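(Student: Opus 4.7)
The plan is to differentiate the weighted integral directly, isolate the contribution of the nonlocal factor $\psi$, and then reduce the bulk computation to a Huisken-style algebraic identity producing two perfect squares. First, I would apply the product rule,
\[\ddt{}\left(\psi \int_{M_t}\phi\Phi d\mu\right) = -\tfrac{1}{2}\ov{H}^2 \psi \int_{M_t}\phi\Phi d\mu + \psi \ddt{}\int_{M_t}\phi\Phi d\mu,\]
since $\psi'(t) = -\tfrac{1}{2}\ov{H}^2(t)\psi(t)$. For the second term I would use the standard transport formula
\[\ddt{}\int_{M_t} f\, d\mu = \int_{M_t}\left(\tfrac{d^*f}{dt} - (H-\ov{H})H f\right) d\mu,\]
where $\tfrac{d^*}{dt}$ is the material derivative along the flow and the extra factor $-(H-\ov{H})H$ is the evolution of $d\mu$ stated just before the proposition.

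Second, I would split $\tfrac{d^*(\phi\Phi)}{dt} = \Phi \tfrac{d^*\phi}{dt} + \phi \tfrac{d^*\Phi}{dt}$. Writing $\tfrac{d^*\phi}{dt} = \ho\phi + \Delta\phi$ and integrating by parts on $M_t$,
\[\int_{M_t} \Phi \Delta \phi\, d\mu = \int_{M_t}\phi \Delta \Phi\, d\mu + \int_{\partial M_t}\left(\Phi \n_\mu \phi - \phi \n_\mu \Phi\right) d\mu^\partial,\]
which immediately produces the $\int_{M_t} \Phi \ho\phi\, d\mu$ term and the boundary contribution appearing in the statement.

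Third, everything else can be collected into a single coefficient of $\phi\Phi$, namely
\[\tfrac{1}{\Phi}\tfrac{d^*\Phi}{dt} + \tfrac{1}{\Phi}\Delta_{M_t}\Phi - (H-\ov{H})H - \tfrac{1}{2}\ov{H}^2.\]
To evaluate this, I would use $X_t = -(H-\ov{H})\nu$ to get $\tfrac{d^*\Phi}{dt} = \partial_t \Phi - (H-\ov{H})\ip{\n\Phi}{\nu}$, combined with the ambient-to-intrinsic Laplacian identity $\Delta_{M_t} F = \Delta_{\R^{n+1}} F - \n^2 F(\nu,\nu) - H\ip{\n F}{\nu}$ applied to $F=\Phi$, and the explicit Gaussian derivatives $\n\Phi = -\tfrac{x-x_0}{2(t_0-t)}\Phi$, $\partial_t\Phi = \bigl[\tfrac{n}{2(t_0-t)}-\tfrac{|x-x_0|^2}{4(t_0-t)^2}\bigr]\Phi$. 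The $|x-x_0|^2/(t_0-t)^2$ pieces and the $(n+1)/(t_0-t)$ pieces cancel in the standard Huisken fashion, leaving an expression depending only on $H$, $\ov{H}$ and $c:= \tfrac{\ip{\nu}{x-x_0}}{2(t_0-t)}$.

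The core algebraic step is then to verify that this reduced coefficient equals
\[-\tfrac{1}{2}\left[(H-\ov{H}-c)^2 + (H-c)^2\right],\]
which follows by direct expansion: matching the $-2c^2$, the cross-term $(2H-\ov{H})c$ and the quadratic $-H^2 + H\ov{H} - \tfrac{1}{2}\ov{H}^2$ against what comes out of the computation above. Combining this with the $\int \Phi \ho\phi\, d\mu$ term and the boundary integral yields the stated monotonicity formula. The only genuine obstacle is the bookkeeping in this final completion-of-squares; once the ambient-vs-intrinsic Laplacian identity and the evolution of $d\mu$ are applied, the $\tfrac{1}{2}\ov{H}^2$ produced by $\psi'$ is precisely what is needed to close up the second square involving $H-\ov{H}$, which is the reason for the specific definition of $\psi$.
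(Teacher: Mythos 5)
Your proposal is correct and follows essentially the same route as the paper's own proof: product rule on $\psi$ producing the $-\tfrac12\ov H^2$ term, the transport formula with the VPMCF evolution of $d\mu$, integration by parts to produce the boundary and $\ho\phi$ terms, the ambient-to-intrinsic Laplacian identity applied to the Gaussian, and the final completion of squares with $c=\tfrac{\ip{\nu}{x-x_0}}{2(t_0-t)}$ closing up precisely because of the $\tfrac12\ov H^2$ supplied by $\psi'$. (One small slip in your sketch of the algebra: the $c^2$ coefficient on the right-hand side of the square-completion identity is $-1$, not $-2$; it is the \emph{sum} of the two $-\tfrac12 c^2$ contributions from the two squares.)
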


\begin{proof} The proof can be found in \cite{ElenaDiss}, see also \cite[Remark before Proposition~4.10]{Elena15}. For the convenience of the reader we repeat it here.
For general functions $\phi$ and $\Phi:\bb{R}^{n+1}\times[0,T) \ra \bb{R}$ we have that
\begin{flalign*}
\ddt{}\int_{M_t} \phi\Phi d \mu 
&= \int_{M_t} \Phi\ho \phi +\div(\Phi \n \phi -\phi \n \Phi) +\phi\left(\ddt{\Phi}+\Delta \Phi\right)-H(H-\ov{H})\phi\Phi d \mu\\
&= \int_{M_t} \Phi\ho \phi +\div(\Phi \n \phi -\phi \n \Phi)d \mu\\
&\qquad +\int_{M_t}\left[\frac 1 \Phi\left(\left(\pard{}{t}+\ov{\Delta}\right) \Phi-\ov\n^2_{\nu\nu} \Phi -(H-\ov{H})\ov\n_\nu \Phi-H\ov\n_\nu \Phi\right)-H(H-\ov{H})\right]\phi\Phi d \mu.
\end{flalign*}
For our choice of $\Phi$, we get that
\[\pard{\Phi}{t} = \left(\frac{n}{2(t_0-t)}-\frac{|x-x_0|^2}{4(t_0-t)^2}\right)\Phi,\qquad \ov \n_V \Phi = -\frac{\ip{V}{x-x_0}}{2(t_0-t)}\Phi, \]\[\ov \n^2_{VW} \Phi = \left(\frac{\ip{V}{x-x_0}\ip{W}{x-x_0}}{4(t_0-t)^2}-\frac{\ip{V}{W}}{2(t_0-t)}\right)\Phi\ .\]
In particular we have
\[\left(\pard{}{t}+\ov{\Delta}\right) \Phi=\frac{-1}{2(t_0-t)}\Phi, \qquad \ov\n^2_{\nu\nu} \Phi = \left(\left(\frac{\ip{\nu}{x-x_0}}{2(t_0-t)}\right)^2-\frac{1}{2(t_0-t)}\right)\Phi\]
and so the contents of the square brackets in the last integrand above are
\begin{flalign*}-&\left(\frac{\ip{\nu}{x-x_0}}{2(t_0-t)}\right)^2+(H-\ov{H})\frac{\ip{\nu}{x-x_0}}{2(t_0-t)}+H\frac{\ip{\nu}{x-x_0}}{2(t_0-t)}-H(H-\ov{H})\\
&=-\frac 1 2\left|(H-\ov{H})-\frac{\ip{\nu}{x-x_0}}{2(t_0-t)}\right|^2-\frac 1 2\left| H-\frac{\ip{\nu}{x-x_0}}{2(t_0-t)}\right|^2+\frac 1 2\ov{H}^2\ .
\end{flalign*}
We have chosen $\psi(t)=e^{-\frac 12\int_0^t \ov{H}^2(s) ds}$ so that $\ddt \psi= -\frac 1 2 \ov{H}^2$ which yields
\begin{flalign*}
\ddt{}\int_{M_t} \psi\phi\Phi d \mu &= \int_{M_t}\psi \Phi\ho \phi d \mu +\psi\int_{\partial M_t} \Phi \n_\mu \phi -\phi \n_\mu \Phi d \mu^\partial\\
&\qquad -\frac{\psi}{2}\int_{M_t}\left[\left|(H-\ov{H})-\frac{\ip{\nu}{x-x_0}}{2(t_0-t)}\right|^2+\left| H-\frac{\ip{\nu}{x-x_0}}{2(t_0-t)}\right|^2 \right]\phi\Phi d \mu\ .
\end{flalign*}
\end{proof}

\begin{remark}
We have that\footnote{Is this remark worth keeping? It is sort of tangentially interesting but it doesn't really go anywhere...} 
\[\ddt{}\log |M_t| = -\fint_{M_t} H^2 d \mu + \ov{H}^2\]
and so we may rewrite
\[\psi(t) = \sqrt{\frac{|M_0|}{|M_t|}}e^{\int_0^t\fint_{M_s} H^2 d \mu ds}\]
\end{remark}
\begin{remark}\label{scaleh1}
Consider a parabolic rescaling of $M_t$, $\widetilde{M}_s$ where $t=\l^2 s+t_0$, and ${M}_t = \l \widetilde{M}_s +x_0$. Then $\ov{H}(\widetilde{M}_s)=\l\ov{H}(M_t)$ and we have that
\[\log \widetilde{\psi}(s) = \int_0^s\ov{H}(\widetilde{M}_{\hat{s}})^2d\hat{s}=\int_0^s\l^{2}\ov{H}({M}_{t(\hat s)})^{2}d\hat{s}=\int_{t_0}^{t}\ov{H}({M}_{\hat{t}})^{2}d\hat{t}=\log\psi(t)\ ,\]
which implies that our object to differentiate is invariant under parabolic rescaling. 
\end{remark}
\subsection{VPMCF blowups are ancient MCFs}
We will use the following MCF terminology.
\begin{defses}
Given any $X(x,t)$ satisfying VPMCF the parabolic rescaling about a point $p\in \bb{R}^{n+1}$ at time $T$ given by \[X^\lambda_p(x,\tau) := \lambda (X(x, \lambda^{-2} \tau+T)-p)\] for $\tau\leq 0$. Equivalently we may write this as 
\[M^{\lambda, p}_\tau =\lambda(M_{\lambda^{-2}\tau+T}-p)\ .\] 
A parabolic rescaling of VPMCF gives another solution of VPMCF (although with a different fixed volume).

\end{defses}

\begin{defses}
Let $M_t$ be a mean curvature flow on the time interval $[0,T)$, and suppose that the second fundamental form blows up at time $T$ that is, $\limsup_{t\ra T} \sup_{M_t}|A| = \infty$. Then we say that this is a singularity of VPMCF where
\begin{enumerate}[label=\roman*)]
\item the singularity is of type I if there exists a $C>0$ such that
\[\sup_{M_t}|A|^2 \leq \frac C {T-t}\]
for all $t<T$ and
\item the singularity is of type II otherwise.
\end{enumerate}
\end{defses}
\begin{defses}
Let $M_t$ be a mean curvature flow on the time interval $[0,T)$. Then
\begin{enumerate}[label=\roman*)]
\item Suppose we are given a point $p \in \bb{R}^{n+1}$ and a sequence $\lambda_i\ra \infty$. Then a Type I (or centred) blow up sequence is the sequence of solutions to VPMCF given by
\[M^i_\tau = \lambda_i(M_{\lambda_i^{-2}\tau + T}-p)\]
for $\tau \in [-\lambda_i^2 T,0)$.
\item Assuming that the second fundamental form blows up as $t\ra T$, then we follow Hamilton~\cite{Hamilton} and define a Type~II blow up sequence by a sequence times $t_i$ and points $x_i\in M_{t_i}$ such that
\[\lambda_i:=|A(x_i,t_i)|^2(T-i^{-1}-t_i)=\max_{(x,t)\in M^n \times[0,T-i^{-1}]}|A(x,t)|^2(T-i^{-1}-t)\]
Then the Type II blow up along $(x_i,t_i)$ is given by
\[M^i_\tau = \lambda_i(M_{t_i+\lambda_i^{-2}\tau}-x_i)\]
for $\tau\in[-\lambda_i^2 t_i, \lambda_i^2(T-i^{-1}-t_i))$
\end{enumerate}
\end{defses}
Once we have uniformly bounded curvature, $\ov{H}$ is bounded and Shi-type estimates hold to all orders, as in \cite[Proposition 3.22]{Eckerbook}. As a result, if a type I singularity occurs, then any type I blowup sequence will converge locally smoothly to a smooth flow for $\tau\in(-\infty, 0)$ (which is possibly empty depending on the $p$). This will be referred to as an ancient flow. Later in Lemma~\ref{lem:homothetic} we will see that this is a self similar solution of MCF. If we have a type II  singularity then the Type II blowup sequence converges smoothly to a flow for $\tau\in(-\infty, \infty)$ \cite{Hamilton}. 

We are now in a position to prove the following:
\begin{theorem}\label{thm:asymtflow}
Suppose that a singularity of VPMCF occurs at a finite time. Then 
\begin{enumerate}[label=\alph*)]
\item If the singularity is type I then any locally smoothly converging subsequence of the Type I blow up sequence which converges smoothly is an ancient solution to MCF. 
\item If the singularity if type II then any locally smoothly converging subsequence of a type II blow up sequence yields an eternal solution to MCF.
\end{enumerate}
\end{theorem}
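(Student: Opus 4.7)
The plan is to show that the nonlocal Lagrange multiplier vanishes in the limit of any smoothly converging blowup sequence, so the limit flow becomes a genuine MCF. A parabolic rescaling of VPMCF is again VPMCF with $\ov{H}$ rescaled by $\lambda_i^{-1}$, and by Remark~\ref{scaleh1} the quantity $\int \ov{H}^2\, d\tau$ is parabolically scale-invariant. Writing $t(\tau)=\lambda_i^{-2}\tau + t_i^*$ (with $t_i^* = T$ for type~I, $t_i^*=t_i$ for type~II), this yields
\[\int_{\tau_1}^{\tau_2} \ov{H}(M^i_\tau)^2\, d\tau = \int_{t(\tau_1)}^{t(\tau_2)} \ov{H}(M_t)^2\, dt .\]
Since $T<\infty$, Corollary~\ref{cor:L2Hbound} gives $\ov{H}^2 \in L^1([0,T))$, and since $\lambda_i\to\infty$ in both scenarios, the interval $[t(\tau_1),t(\tau_2)]$ has length $\lambda_i^{-2}(\tau_2-\tau_1)\to 0$ while staying in $[0,T]$. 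Absolute continuity of the Lebesgue integral then forces $\int_{\tau_1}^{\tau_2} \ov{H}(M^i_\tau)^2\, d\tau \to 0$ for every fixed compact interval $[\tau_1,\tau_2]$.

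Next, assuming $X^i\to X^\infty$ locally smoothly, taking the inner product of $\partial_\tau X^i = -(H^i - \ov{H}^i)\nu^i$ with $\nu^i$ gives the identity
\[\ov{H}(M^i_\tau) = H^i(x,\tau) + \langle \partial_\tau X^i(x,\tau), \nu^i(x,\tau)\rangle,\]
whose right-hand side converges locally smoothly and whose left-hand side is independent of $x$. Hence $\ov{H}(M^i_\tau)$ converges locally uniformly in $\tau$ to some function $h(\tau)$, and the limit flow satisfies $\partial_\tau X^\infty = -(H^\infty - h)\nu^\infty$. Combining this uniform convergence with the vanishing integral from the previous step gives $\int h^2\, d\tau = 0$ on every compact interval, so $h\equiv 0$ and $X^\infty$ solves MCF.

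For the time intervals, part~(a) is immediate: $\tau \in [-\lambda_i^2 T, 0)$ exhausts $(-\infty,0)$ as $\lambda_i\to\infty$, producing an ancient flow. For part~(b), the type~II condition forces $\lambda_i^2(T - i^{-1} - t_i)\to\infty$ by definition, and $\lambda_i^2 t_i\to\infty$ as well (since $t_i\to T>0$: otherwise curvature would blow up at a time strictly before $T$, contradicting smoothness on $[0,T)$); hence $\tau \in [-\lambda_i^2 t_i,\lambda_i^2(T-i^{-1}-t_i))$ exhausts $\mathbb{R}$, giving an eternal flow. The entire argument rests on the finite $L^2$ bound of Corollary~\ref{cor:L2Hbound}; this is the principal technical content, since without uniform integrability of $\ov{H}^2$ one could not rule out a nontrivial Lagrange multiplier surviving in the limit and the blowups would merely be VPMCFs rather than MCFs.
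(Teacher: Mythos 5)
Your proof is correct and reaches the same conclusion as the paper, but the key step — upgrading $L^2$-smallness of $\ov{H}^i$ to uniform vanishing — is handled by a genuinely different mechanism. The paper fixes times $T_j\nearrow T$ with $\int_{T_j}^T\ov{H}^2\,dt=j^{-1}$, then computes the explicit evolution $\ddt{}\ov{H}^i=\fint (H^i-\ov{H}^i)|A^i|^2-H^i(H^i-\ov{H}^i)^2\,d\mu$ and uses the type-specific curvature bound ($|A^i|^2\leq|\tau|^{-1}$ in case~(a), $|A^i|\leq 1$ in case~(b)) to get $|\ddt{}\ov{H}^i|\leq C(a,b)$ on compact $[a,b]$; combining a small $L^2$ norm with a bounded time derivative then forces $\ov{H}^i\to 0$ in $L^\infty([a,b])$. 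You instead use absolute continuity of the Lebesgue integral to get the $L^2$ smallness on every compact interval without the $T_j$ construction, and — more substantively — you note that the normal-speed identity $\ov{H}^i(\tau)=H^i(x,\tau)+\langle\partial_\tau X^i,\nu^i\rangle$ (whose right-hand side is a parametrization-invariant local quantity, hence converges under the assumed local smooth convergence) already shows $\ov{H}^i\to h$ uniformly on compacts for some continuous $h$, after which $\int h^2=0$ identifies $h\equiv 0$. This sidesteps the curvature-dependent derivative estimate entirely and makes the two cases (a) and (b) nearly symmetric; the paper's route, by contrast, produces a quantitative rate of decay for $\ov{H}^i$ in terms of $j$, which your argument does not (though the theorem does not require one). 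Both approaches rest, as you correctly emphasize, on Corollary~\ref{cor:L2Hbound} and the scaling invariance of $\int\ov{H}^2$; the treatment of the exhausting time intervals in parts (a) and (b) is the same in substance.
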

\begin{proof}
We first consider the type I blow up case: By dilating we may assume wlog that  $|A|^2(x,t)<\frac 1 {T-t}$. By Corollary \ref{cor:L2Hbound}, we know that for any finite $T$
\[\int_0^T \ov{H}^2(t) dt<C\ .\]
By continuous dependence of an integral on it's domain of definition (Lebesgue's differentiation theorem) we have that there exist $T_i\ra T$ such that
\begin{equation}\int^T_{T_j} \ov{H}^2 dt = j^{-1}\label{eq:Hjest}
\end{equation}
and therefore, writing $t = \lambda_i^{-2}(\tau+T)$ then
\[j^{-1}=\int^T_{T_j} \ov{H}^2(t) dt =\int^0_{-\lambda^2(T-T_j)}\l^{-2}\ov{H}^2(\tau)d\tau=\int^0_{-\lambda_i^2(T-T_j)}(\ov{H}^i)^2(\tau)d\tau \ .\]
where $\ov{H}^i$ is the average mean curvature of $M_\tau^i$ of the type I blowup. Due to the type I singularity hypothesis, the second fundamental form of $M^i_\tau$ satisfies 
\[|A^i(x,\tau)|^2\leq \frac{1}{|\tau|}\ .\]
Furthermore, we have that
\begin{equation}
\left|\ddt{}\ov{H}^i\right|=\left|\fint (H^i-\ov{H}^i)|A^i|^2-H^i(H^i-\ov{H}^i)^2d\mu\right|\leq \frac{4}{|\tau|^3}\label{eq:ddtbarHiest}
\end{equation}

Given any $a<b<0$, if $\lambda_i$ is so large that $[a,b]\subset [-\max\{\lambda_i^2(T-T_j), \lambda_i^2T\}, 0)$ then we have that there is a constant $C_1=C_1(a,b)$ such that
\[\ov{H}^i<C_1(a,b)j^{-1}\]
where we used the estimate \eqref{eq:ddtbarHiest}. Therefore, for any smoothly converging subsequence, $\ov{H}^i\ra 0$ uniformly on $[a,b]$ and so the flow converges smoothly on $[a,b]$ to a solution to MCF,
\[\ddt{X^\infty} = -H^\infty \nu^\infty\ .\]
Note that the above argument implies that for \emph{any} subsequence such that the blow up sequence converges in $C^2$, we still get a mean curvature flow.

The argument for the type II blow up is similar: Suppose that we have a converging subsequence of a Type II blow up. Then, defining $T_j$ as in \eqref{eq:Hjest} then substituting $t=t_i+\lambda_i^{-2}\tau$
\[j^{-1}=\int^T_{T_j} \ov{H}^2(t) dt \geq \int^{\l_i^2(T-t_i-i^{-1})}_{\lambda^2_i(T_j-t_i)}\l^{-2}_i\ov{H}^2(\tau)d\tau=\int^{\l_i^2(T-t_i-i^{-1})}_{\lambda^2_i(T_j-t_i)}\ov{H}^2_i(\tau)d\tau\ .\]
By the type II hypothesis we know that $\l_i^2(T-t_i-i^{-1})\ra \infty$, see for example \cite[p.~89]{Mantegazza}, while we may assume that for $i$ large enough $T_j<t_i$ and so $\l_i^2(T_j-t_i)\ra -\infty$. This time, by our choice of blow up sequence, $|A^i(x,t)|\leq 1$ on the time interval $[-\lambda_i^2 t_i,  \lambda_i^2(T-i^{-1}-t_i))$ and so, estimating similarly to \eqref{eq:ddtbarHiest}, on this time interval $|\ddt{}\ov{H}|\leq 4$. As a result, given any finite time interval $[a,b]\subset \bb{R}$, if $i$ is large enough so that $[a,b]\subset[\lambda_i^2(T_j-t_i), \lambda_i^2(T-t_i-i^{-1})]$ then there exists a $C_2=C_2(b-a)$ such that $\ov{H}^i<C_2 j^{-1}$. As we know that the Type II sequence converges locally smoothly, we see that on $[a,b]$ we $\ov{H}^i\ra 0$ uniformly and again the limit is a solution to MCF.
\end{proof}

\begin{lemma} \label{lem:homothetic}
  Suppose that a singularity of VPMCF occurs at a finite time $T$ which is of type~I. 
 Then the asymptotic limit flow from the above theorem is a (possibly trivial) homothetically shrinking solution of the Mean Curvature Flow.
\end{lemma}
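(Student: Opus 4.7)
The plan is to apply the VPMCF monotonicity formula at the singular point $(p,T)$, transfer the resulting integrability estimate to the blow-up via parabolic scale-invariance, and extract the self-shrinker equation on the limit flow. Specifically, applying Proposition~\ref{prop:Monoton1} with $\phi\equiv 1$ and $\Phi = \Phi_{(p,T)}$ -- the closed hypothesis kills the boundary term -- and dropping the nonnegative $|(H-\ov H)-\tfrac{\ip{\nu}{x-p}}{2(T-t)}|^2$ contribution gives
\[\tfrac{1}{2}\int_0^t \psi(s)\int_{M_s}\left|H - \frac{\ip{\nu}{x-p}}{2(T-s)}\right|^2 \Phi_{(p,T)}\, d\mu\, ds \leq \int_{M_0}\Phi_{(p,T)}\, d\mu\]
for every $t<T$. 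By Corollary~\ref{cor:L2Hbound}, $\psi(t) \geq e^{-C/2}>0$ on $[0,T)$, and $\psi$ (being monotone decreasing) has a positive limit $\psi_\infty$ as $t\to T^-$; in particular the above integral is finite on all of $[0,T)$.

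Next, I would translate this integrability to the rescaled flow using parabolic scale-invariance. Under the type~I rescaling $M^i_\tau = \lambda_i(M_{\lambda_i^{-2}\tau + T} - p)$, a direct calculation using $H^i = \lambda_i^{-1} H$, $d\mu^i = \lambda_i^n\, d\mu$, and $\Phi_{(0,0)}(y,\tau) = \lambda_i^{-n}\Phi_{(p,T)}(x,t)$, together with $\tilde\psi_i(\tau) := \psi(\lambda_i^{-2}\tau + T)$ as in Remark~\ref{scaleh1}, yields for every $[a,b]\subset(-\infty,0)$
\[\int_a^b \tilde\psi_i\int_{M^i_\tau}\left|H^i - \frac{\ip{\nu^i}{y}}{-2\tau}\right|^2 \Phi_{(0,0)}\, d\mu^i\, d\tau = \int_{T + \lambda_i^{-2}a}^{T + \lambda_i^{-2}b}\psi\int_{M_t}\left|H - \frac{\ip{\nu}{x-p}}{2(T-t)}\right|^2 \Phi_{(p,T)}\, d\mu\, dt.\]
As $\lambda_i\to\infty$, the right-hand side is a shrinking tail of the convergent integral from the first step, hence tends to zero.

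Finally, Theorem~\ref{thm:asymtflow} supplies a locally smoothly converging subsequence of the blow-up with limit $M^\infty_\tau$ an ancient MCF on $(-\infty,0)$, and $\tilde\psi_i \to \psi_\infty>0$ uniformly on compacta. Passing to the limit using smooth convergence yields
\[\psi_\infty \int_a^b \int_{M^\infty_\tau}\left|H^\infty - \frac{\ip{\nu^\infty}{y}}{-2\tau}\right|^2 \Phi_{(0,0)}\, d\mu^\infty\, d\tau = 0,\]
and since $[a,b]\subset(-\infty,0)$ is arbitrary we conclude $H^\infty = \tfrac{\ip{\nu^\infty}{y}}{-2\tau}$ identically -- i.e.\ $M^\infty_\tau$ is a homothetically shrinking (possibly trivial) solution of MCF. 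The main technical obstacle is the careful scaling bookkeeping: verifying that the $\psi$-weight, the Gaussian, the shrinker defect and the area measure all combine so that the whole monotonicity quantity is parabolically scale-invariant, and that $\psi$ stays uniformly bounded below so that this invariance survives in the limit -- both secured by Corollary~\ref{cor:L2Hbound}.
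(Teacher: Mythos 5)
Your proof is correct and follows essentially the same route as the paper: integrate the monotonicity formula with $\phi\equiv1$, exploit the parabolic scale-invariance of the weighted Gaussian-density quantity, observe that the rescaled dissipation over $[a,b]$ corresponds to a shrinking tail of a finite integral (the paper's equivalent phrasing is that $\Lambda_{(x_0,T)}(T+\tau_l/\lambda_i^2)\to\lim_{t\to T}\Lambda_{(x_0,T)}(t)$ for $l=1,2$, so the difference vanishes), and pass to the limit along the smoothly converging subsequence. The one minor omission is that the final passage to the limit on the non-compact rescaled hypersurfaces should be done by localizing to balls $B_R(0)$ and invoking Fatou's lemma before letting $R\to\infty$, as the paper spells out; apart from this, and the harmless bookkeeping point that $\psi_i(\tau)\to\psi_\infty>0$ rather than to $1$, the arguments coincide.
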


\begin{proof}
The proof goes as in the case of type I singularities of the Mean Curvature Flow. We sketch the proof for the convenience of the reader. Note that the Monotonicity formula implies 
\begin{align*}
 \frac{d}{dt}\left(\psi(t)\int_{M_t}\Phi_{(x_0,T)}(x,t)\,d\mu^n_t\right) \leq 0,
\end{align*}
thus the limit $\lim_{t\to T}\Lambda_{(x_0,T)}(t)$ exists, where
$$\Lambda_{(x_0,T)}(t):=\psi(t)\int_{M_t}\Phi_{(x_0,T)}(x,t)\,d\mu^n_t\,.$$
By parabolic rescaling we have that
\begin{align*}
\Lambda_{(x_0,T)}(t)
&=\psi(t)\int_{M_{t}}\Phi_{(x_0,T)}(x,t)\,d\mu_{t} \notag=\psi_i(\tau)\int_{M^i_\tau}\Phi_{(0,0)}(y,\tau)\,d\mu^i_{\tau}
=:\Lambda_{(0,0)}^i(\tau)\,,
\end{align*}
where $t = T + \frac{\tau}{\lambda_i^2}$.
We apply the rescaled monotonicity formula and estimate for $\tau_1<\tau_2$
\begin{align*}
0&\leq\frac{1}{2}\int_{\tau_1}^{\tau_2}\psi_i(\tau) \int_{M^i_\tau}\left|H^i+\frac{\langle y,\nu\rangle}{2\tau}\right|^2\Phi_{0,0}\,d\mu^n_\tau d\tau\\
&\leq\frac{1}{2}\int_{\tau_1}^{\tau_2} \psi_i(\tau)\int_{M^i_\tau} \left\{ \left|H^i - \ov{H}^i+\frac{\langle y,\nu\rangle}{2\tau}\right|^2
		+\left|H^i+\frac{\langle y,\nu\rangle}{2\tau}\right|^2\right\}
				\Phi_{0,0}\,d\mu^n_\tau d\tau
\leq\Lambda_{(0,0)}^i(\tau_1)-\Lambda_{(0,0)}^i(\tau_2)&\notag \\
&=\Lambda_{(x_0,T)}\!\left(T+\frac{\tau_1}{\lambda_i^2}\right)
			-\Lambda_{(x_0,T)}\!\left(T+\frac{\tau_2}{\lambda_i^2}\right)&
\end{align*}
for all $i$.
Since 
$$T+\frac{\tau_l}{\lambda_i^2}\to T$$ 
for $i\to\infty$ and $l=1,2$, the right-hand side of the above converges to 0 for $i\to\infty$. 
The flows $\big(\big(M^i_\tau\big)_{\tau\in[\tau_1,\tau_2]}\big)_{i\in\mathbb{N}}$
converge smoothly along a subsequence and on compact subsets of $\R^{n+1}$ to a smooth flow $\left(M^\infty_\tau\right)_{\tau\in[\tau_1,\tau_2]}$.
Let $R>0$.
There exists a $i_0\in\mathbb{N}$ so that for all $i\geq i_0$, $M_\tau^i\cap B_R(0)$ can be parametrized over $M_\tau^\infty\cap B_R(0)$. 
That is, there exist immersions $Y_i:M_\tau^\infty\cap B_R(0)\to\R^{n+1}$ with
$$M_\tau^i\cap B_R(0)=Y_i(M_\tau^\infty\cap B_R(0))$$
and $Y_i\to\operatorname{id}$ for $i\to\infty$. From the above we have that
\begin{align*}
\Lambda_{(x_0,T)}\!\left(T+\frac{\tau_1}{\lambda_i^2}\right)
			-\Lambda_{(x_0,T)}\!\left(T+\frac{\tau_2}{\lambda_i^2}\right)&\geq\frac{1}{2}\int_{\tau_1}^{\tau_2}\psi_i(\tau) \int_{M^i_\tau\cap B_R(0)}\left|H^i+\frac{\langle y,\nu\rangle}{2\tau}\right|^2\Phi_{0,0}\,d\mu^n_\tau d\tau\ .
\end{align*}
Taking a $\liminf$ and applying Fatou's lemma, 
\begin{align*}
0&=\liminf_{i\to\infty} \int_{\tau_1}^{\tau_2} \int_{M^i_\tau\cap B_R(0)}
		\frac{\psi_i(\tau)}{2}\left|H^i+\frac{\langle y,\nu^i\rangle}{2\tau}\right|^2
				\Phi_{0,0}\,d\mu^i_{\tau}d\tau \notag\\
				&=\liminf_{i\to\infty} \int_{\tau_1}^{\tau_2} \int_{M^i_\tau\cap B_R(0)}
		\frac{\psi_i(\tau)}{2}\left|H^i+\frac{\langle y,\nu^i\rangle}{2\tau}\right|^2
				\Phi_{0,0}\,\sqrt{\det(DY_i)}\,dxd\tau \notag\\
&\geq \int_{\tau_1}^{\tau_2}\int_{M^\infty_\tau\cap B_R(0)}
			\liminf_{i\to\infty}\left( \frac{\psi_i(\tau)}{2}
			\left|H^i+\frac{\langle Y_i,\nu^i\rangle}{2\tau}\right|^2
				\Phi_{0,0}\sqrt{\det(DY_i)}\right)\,dx d\tau \notag\\
&= \frac{1}{2} \int_{\tau_1}^{\tau_2}\int_{M^\infty_\tau\cap B_R(0)}
			\left|H^\infty+\frac{\langle y,\nu^\infty\rangle}{2\tau}\right|^2
				\Phi_{0,0}\,d\mu^\infty_{\tau}d\tau \,,
\end{align*}
where we have used that $\ov{H}^i \to 0$ uniformly as shown in the proof of Theorem~\ref{thm:asymtflow}, and thus $\psi_i(\tau) \to 1$.
Since $R>0$ was arbitrary, we deduce
$$\int_{\tau_1}^{\tau_2}\int_{M^\infty_\tau}\left|H^\infty+\frac{\langle y,\nu^\infty\rangle}{2\tau}\right|^2\Phi_{0,0}\,d\mu_\tau d\tau=0\,.$$
Since the convergence is smooth, and sending $\tau_1\to-\infty$ and $\tau_2\to0$ yields
$$\left|H^\infty+\frac{\langle y,\nu^\infty\rangle}{2\tau}\right|^2=0$$ 
for every $\tau\in(-\infty,0)$ and every $y\in M^\infty_\tau$, so $M^\infty_\tau$ is a homothetically shrinking (possibly trivial) solution of the Mean Curvature Flow.
 
\end{proof}

\subsection{Density estimates and the clearing out lemma} \label{subsec:clearingout}

In this section we demonstrate that the monotonicity arguments in \cite[Chapter 4]{Eckerbook} may be modified to the case of VPMCF with essentially minor additional assumptions depending only on the $L^2$ norm of $\ov{H}$. Throughout this section we will assume that our flow is smooth and properly immersed up to the final, possibly singular, time.

From the Monotonicity formula we have that
\begin{flalign*}
\ddt{} \left( \psi\int_{M_t} \phi\Phi_{(x_0,t_0)} d \mu \right)&\leq  \psi\int_{M_t} \Phi_{(x_0,t_0)}\ho \phi d \mu\ .
\end{flalign*}
and from Corollary \ref{cor:L2Hbound} we have that $1=\psi(0)\geq \psi(t)\geq \e(t)>0$ for some strictly positive function $\e(t)$, and by the dominated convergence theorem, $\psi(t)$ is continuous.

\begin{defses}
The space-time track of a flow is defined by\[\mathcal{M}:=\cup_{t\in[0,T)} M_t\times\{t\} \subset \bb{R}^{n+1}\times [0,T)\ .\] 
The Gaussian density on $\mathcal{M}$ is determined by
\[\T(\mathcal{M}, x_0, t_0):=\lim_{t \nearrow t_0}\int_{M_t} \Phi_{(x_0,t_0) }d\mu_t\]
where, as usual, integration also counts multiplicities in $\mathcal{M}$. 
\end{defses}

Indeed, by the monotonicity formula (with $\phi=1$) we see that
\[\int_{M_t} \Phi_{(x_0,t_0) }d\mu_t\leq \frac{1}{\psi(t)}\int_{M_0} \Phi_{(x_0,t_0) }d\mu_0=: \frac{C_0(M_0)}{\psi(t)}\]
and so by dominated convergence theorem, the density converges everywhere. It is also useful to define a localised density, and so we consider the localisation function
\[\phi_{(x_0,t_0),\rho}(x,t):=\left(1-\frac{|x-x_0|^2+2n(t-t_0)}{\rho^{2}}\right)^3_+\] 
then we have that on the support of $\phi_{(x_0,t_0),\rho}(x,t)$,
\[\ho \phi_{(x_0,t_0),\rho}(x,t)=-\frac{6}{\rho^2}\ov{H}\ip{x-x_0}{\nu}\phi_{(x_0,t_0),\rho}^\frac 2 3\leq \frac{6}{\rho}|\ov{H}|\]
and so by the monotonicity formula 
\[\ddt{} \left( \psi(t)\int_{M_t} \phi_{(x_0,t_0),\rho}\Phi_{(x_0,t_0)} d \mu \right)\leq \frac{6}{\rho}|\ov{H}|\int_{M_t}\psi(t)\Phi_{(x_0,t_0)}d\mu \leq  \frac{6}{\rho}|\ov{H}|C_0(M_0)
\]
where we apply the monotonicity formula to estimate the integral in terms of $M_0$.
In particular, we may estimate that for any $t_2<t_3$,
\begin{flalign}
\psi(t_3)&\int_{M_{t_3}} \phi_{(x_0,t_0),\rho}\Phi_{(x_0,t_0)} d \mu\nonumber\\
&\leq \psi(t_2)\int_{M_{t_2}} \phi_{(x_0,t_0),\rho}\Phi_{(x_0,t_0)} d \mu+6C_0(M_0)\sqrt{\frac{t_3-t_2}{\rho^2}}\sqrt{\int_{t_2}^{t_3} \ov{H}^2dt}\ .\label{eq:AlmostMonotonicity}
\end{flalign}
With some abuse of notation, we define localised monotonicity to be
\[\T(\mathcal{M}, x_0, t_0):=\lim_{t \nearrow t_0}\int_{M_t} \phi_{(x_0,t_0),\rho}\Phi_{(x_0,t_0) }d\mu_t\ ,\]
and note that, by \eqref{eq:AlmostMonotonicity}, this limit always exists and we may estimate that for $t\in(t_0-\rho^2, t_0)$
\begin{equation}\T(\mathcal{M}, x_0, t_0)\leq \frac{\psi(t)}{\psi(t_0)}\int_{M_{t}} \phi_{(x_0,t_0),\rho}\Phi_{(x_0,t_0)} d \mu+6\frac{C_0(M_0)}{\psi(t_0)}\sqrt{\frac{t_0-t}{\rho^2}}\sqrt{\int_{t}^{t_0} \ov{H}^2dt}.\label{eq:LocalDenistyest}
\end{equation}
\begin{lemma}[Upper semicontinuity of $\Theta$]
$\T(\mathcal{M}, x_0, t_0)$ is upper semi continuous in time and space. Explicitly, if $x_j \ra x_0$, $t_j \nearrow t_0$ then 
\[\limsup_{j\ra \infty} \T(\mathcal{M}, x_j, t_j)\leq \T(\mathcal{M}, x_0, t_0)\ .\]
\end{lemma}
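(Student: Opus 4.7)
The plan is to apply the almost-monotonicity estimate \eqref{eq:LocalDenistyest} with the apex $(x_0,t_0)$ replaced by $(x_j,t_j)$, fix an auxiliary time $s<t_0$, take $\limsup_{j\to\infty}$ at fixed $s$, and then send $s\nearrow t_0$. This is the standard MCF argument; the nonlocal ingredients (the factor $\psi(t)$ and the error term $\sqrt{(t_0-s)/\rho^2}\,\sqrt{\int_s^{t_0}\bar H^2}$) will both behave well thanks to the continuity of $\psi$ and the finite-in-time $L^2$ bound from Corollary~\ref{cor:L2Hbound}.

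More precisely, fix $\rho>0$ small and $s\in(t_0-\rho^2,t_0)$. For $j$ sufficiently large we have $s<t_j<t_0$ and $|x_j-x_0|<\rho/2$, and \eqref{eq:LocalDenistyest} applied at the apex $(x_j,t_j)$ gives
\begin{equation*}
\Theta(\mathcal{M},x_j,t_j)\leq \frac{\psi(s)}{\psi(t_j)}\int_{M_s}\phi_{(x_j,t_j),\rho}\Phi_{(x_j,t_j)}\,d\mu+6\,\frac{C_0(M_0)}{\psi(t_j)}\sqrt{\frac{t_j-s}{\rho^2}}\sqrt{\int_s^{t_j}\bar H^2\,dt}.
\end{equation*}
Now let $j\to\infty$ with $s$ still fixed. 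Since $M_s$ is a smooth, fixed hypersurface, and the integrand $\phi_{(\cdot,\cdot),\rho}\Phi_{(\cdot,\cdot)}$ is continuous in the apex parameter with uniform compact support in a $\rho$-neighbourhood of $x_0$, dominated convergence yields convergence of the integral to $\int_{M_s}\phi_{(x_0,t_0),\rho}\Phi_{(x_0,t_0)}\,d\mu$. The factor $\psi$ is continuous (as noted above the definition of $\Theta$), so $\psi(t_j)\to\psi(t_0)$, and the error term converges to $6\,\tfrac{C_0(M_0)}{\psi(t_0)}\sqrt{(t_0-s)/\rho^2}\sqrt{\int_s^{t_0}\bar H^2\,dt}$, which is finite by Corollary~\ref{cor:L2Hbound}.

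Taking $\limsup_{j\to\infty}$ we obtain
\begin{equation*}
\limsup_{j\to\infty}\Theta(\mathcal{M},x_j,t_j)\leq \frac{\psi(s)}{\psi(t_0)}\int_{M_s}\phi_{(x_0,t_0),\rho}\Phi_{(x_0,t_0)}\,d\mu+6\,\frac{C_0(M_0)}{\psi(t_0)}\sqrt{\frac{t_0-s}{\rho^2}}\sqrt{\int_s^{t_0}\bar H^2\,dt}.
\end{equation*}
Finally let $s\nearrow t_0$: the first term tends to $\Theta(\mathcal{M},x_0,t_0)$ by definition of the localised density, while the error term tends to zero, since both $\sqrt{t_0-s}$ and $\int_s^{t_0}\bar H^2\,dt$ vanish in the limit (the latter by absolute continuity of the integral, using $\int_0^{t_0}\bar H^2\,dt<\infty$). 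This yields the claimed inequality.

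The only mildly subtle step is the convergence of the main integral at fixed $s$ as $(x_j,t_j)\to(x_0,t_0)$: a priori the support of $\phi_{(x_j,t_j),\rho}$ drifts with $j$, but at a fixed \emph{smooth} time slice $M_s$ the integrands are uniformly bounded and converge pointwise, so dominated convergence closes the argument. The genuine nonlocal contribution — the error from the Lagrange multiplier — is tamed purely by Corollary~\ref{cor:L2Hbound}, which is what makes the argument work without any further assumption on $\bar H$.
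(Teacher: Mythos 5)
Your proposal is correct and follows essentially the same route as the paper: apply the almost-monotonicity estimate \eqref{eq:LocalDenistyest} at the moving apex $(x_j,t_j)$ with a fixed auxiliary time $t<t_0$, take $\limsup_{j\to\infty}$ using the continuity of $\psi$ and the convergence of the integrand on the fixed smooth slice, and then send the auxiliary time to $t_0$ to kill the error term. The only difference is that you spell out the dominated-convergence step and the absolute-continuity argument for the vanishing error, which the paper leaves implicit.
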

\begin{proof}
We have that for any $t\in (t_0-\rho^2,t_0)$, if $j$ is large enough then
\[\T(\mathcal{M}, x_j, t_j)\leq \frac{\psi(t)}{\psi(t_j)}\int_{M_{t}} \phi_{(x_j,t_j),\rho}\Phi_{(x_j,t_j)} d \mu+6\frac{C_0(M_0)}{\psi(t_j)}\sqrt{\frac{t_j-t}{\rho^2}}\sqrt{\int_{t}^{t_j} \ov{H}^2dt}\ .
\]
so, taking a limsup,
\[\limsup_{j\ra\infty}\T(\mathcal{M}, x_j, t_j)\leq \frac{\psi(t)}{\psi(t_0)}\int_{M_{t}} \phi_{(x_0,t_0),\rho}\Phi_{(x_0,t_0)} d \mu++6\frac{C_0(M_0)}{\psi(t_0)}\sqrt{\frac{t_0-t}{\rho^2}}\sqrt{\int_{t}^{t_0} \ov{H}^2dt}
\]
and limiting $t\ra t_0$ yields the required inequality (as $\psi$ is continuous).
\end{proof}
\begin{cor}\label{cor:Densityabove1}
For any point $(x_0,t_0)$ reached by a smooth properly immersed flow,
\[\T(\mathcal{M},x_0,t_0)\geq 1\]
\end{cor}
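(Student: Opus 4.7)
The approach is a direct computation of the limit defining $\Theta(\mathcal{M}, x_0, t_0)$, exploiting the fact that a smooth properly immersed flow locally looks like a graph over its tangent plane at each preimage of $x_0$. The monotonicity estimate \eqref{eq:AlmostMonotonicity} is used only to guarantee that the limit exists; the lower bound itself is geometric and is essentially insensitive to the presence of the nonlocal term $\ov{H}$.

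First I would note that, since $X(\cdot, t_0) : M^n \to \R^{n+1}$ is a smooth immersion of a compact manifold, the preimage $X(\cdot, t_0)^{-1}(x_0)$ consists of finitely many points $\{p_1, \ldots, p_m\}$ with $m \geq 1$. Choose pairwise disjoint coordinate neighbourhoods $U_i \ni p_i$ and $\delta > 0$ so that $X|_{U_i \times [t_0 - \delta, t_0]}$ is an embedding for each $i$ and $|X(q, t) - x_0| \geq \e_0 > 0$ uniformly for $q \in M^n \setminus \bigcup_i U_i$ and $t \in [t_0 - \delta, t_0]$. Then split
\[\int_{M_t} \phi_{(x_0, t_0), \rho}\, \Phi_{(x_0, t_0)}\, d\mu_t = \sum_{i=1}^m I_i(t) + J(t),\]
where $I_i(t)$ is the contribution from $X(U_i, t)$. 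The remainder $J(t)$ is bounded by $|M_0|\, (4\pi(t_0-t))^{-n/2} e^{-\e_0^2/(4(t_0-t))}$ (using that VPMCF decreases area), which vanishes exponentially as $t \nearrow t_0$, so $\Theta(\mathcal{M},x_0,t_0) = \lim_{t\nearrow t_0} \sum_i I_i(t)$.

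For each $I_i$, pick geodesic normal coordinates $(q^1, \ldots, q^n)$ on $U_i$ centred at $p_i$ with respect to $g(\cdot, t_0)$, and substitute $q = \sqrt{t_0 - t}\, \xi$. Taylor expanding $X$ at $(p_i, t_0)$ and using that $\partial_t X = -(H - \ov{H})\nu$ is orthogonal to every $\partial_j X(p_i, t_0)$, the cross term in $|X(q, t) - x_0|^2$ between $\partial_j X\, q^j$ and $\partial_t X\,(t - t_0)$ drops out and one obtains
\[\frac{|X(q,t) - x_0|^2}{4(t_0 - t)} = \frac{|\xi|^2}{4} + O\bigl(\sqrt{t_0 - t}\bigr)\]
uniformly on compact $\xi$-sets, together with $\sqrt{\det g(q, t)} \to 1$ and $\phi_{(x_0, t_0), \rho} \to 1$ pointwise. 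A Gaussian dominant on the region $|q|$ small, combined with exponential decay of the integrand where $|q|$ is bounded away from $0$, lets us apply the dominated convergence theorem to get
\[I_i(t) \to \int_{\R^n} \frac{1}{(4\pi)^{n/2}}\, e^{-|\xi|^2/4}\, d\xi = 1.\]
Summing over $i$ gives $\Theta(\mathcal{M}, x_0, t_0) = m \geq 1$.

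The main obstacle is essentially bookkeeping in this matched-asymptotics calculation: specifically, verifying that the cross term between spatial variation and the normal VPMCF velocity in $|X - x_0|^2$ drops out at the relevant order, which is what allows the limit to reduce to the standard Gaussian integral independently of the value of $\ov{H}$ at $(p_i, t_0)$. Nothing specific to VPMCF beyond smoothness and properness of the flow enters the argument.
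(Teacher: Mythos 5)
Your proof is correct for the case it treats, but it takes a genuinely different route from the paper, and the difference matters. The paper's proof is a two-line argument: it cites (without computation) the standard fact that $\T(\mathcal{M}, x_j, t_j)=1$ whenever $x_j\in M_{t_j}$ at a smooth time $t_j<t_0$, and then feeds a sequence $x_j\to x_0$, $t_j\nearrow t_0$ into the upper-semicontinuity lemma proved immediately above. You instead perform the Gaussian-density computation directly at the target point $(x_0,t_0)$: isolate the finitely many preimages of $x_0$, discard the exponentially small remainder, rescale by $\sqrt{t_0-t}$ in geodesic normal coordinates, use orthogonality of $\partial_t X$ to the tangent plane to kill the cross term, and pass to the limit by dominated convergence to get $\T=m\geq1$. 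In effect you are spelling out the calculation that the paper suppresses behind the phrase ``$\T(\mathcal{M},x_j,t_j)=1$'', so your argument is more self-contained; and you never need the semicontinuity lemma at all. What the paper's route buys, however, is robustness at the singular time: its proof only uses the flow on $[t_0-\rho^2,t_0)$ and therefore applies when $(x_0,t_0)$ is merely a space-time limit point of a flow that is smooth \emph{strictly before} $t_0$. Your argument genuinely requires $X(\cdot,t_0)$ to exist as a smooth immersion (you Taylor-expand $X$ at $(p_i,t_0)$, choose coordinates with respect to $g(\cdot,t_0)$, and take preimages under $X(\cdot,t_0)$), so it breaks down exactly when $t_0$ is a singular time. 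That is the situation in which the corollary is subsequently used in the clearing-out lemma (Proposition~\ref{prop:clearingout}), where $M_t$ is only assumed smooth on $[t_0-\rho_0^2,t_0)$. So as a standalone alternative proof of the corollary as literally stated your computation is fine, but it does not replace the semicontinuity mechanism the paper relies on downstream.
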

\begin{proof}
On any smooth manifold with $x_j\in M_{t_j}$, $\T(\mathcal{M},x_j,t_j)=1$, so the statement follows from upper semicontinuity.
\end{proof}

\begin{proposition}[The clearing out lemma]\label{prop:clearingout}
Suppose that $M_t$ is smooth and properly immersed for $t\in [t_0-\rho_0^2,t_0)$. We suppose that for positive constants $C_0$ and $L$
\[\int_{t_0-\rho_0^2}^{t_0} \ov{H}^2 dt\leq L \qquad \psi(t_0-\rho_0^2)\int_{M_{t_0-\rho^2_0}} \Phi_{(x_0,t_0)} d\mu\leq C_0\ .\]
Then there exists a constant $\beta_0=\beta_0(n,C_0,L)$ such that for any $\beta\in(0,\beta_0)$ there exists a constant $\theta(\beta, n)$ such that for any $\rho \in (0,\rho_0)$
\[\frac{\mathcal{H}^n(M_{t_0-\beta\rho^2}\cap B_{\rho})}{\rho^n}\geq \theta e^{-^\frac{L}{2}}\ .\]
\end{proposition}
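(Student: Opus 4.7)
The plan is to combine the lower bound $\Theta(\mathcal{M}, x_0, t_0) \geq 1$ from Corollary~\ref{cor:Densityabove1} with a version of the almost-monotonicity estimate \eqref{eq:AlmostMonotonicity} restarted at $t_0-\rho_0^2$. This mirrors the standard MCF clearing-out argument from \cite{Eckerbook}; the only new work is bookkeeping the VPMCF nonlocal error and the $\psi$-weight.

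Concretely, the derivation of \eqref{eq:AlmostMonotonicity} used a bound $\psi(t)\int_{M_t}\Phi\,d\mu \leq C_0(M_0)$ coming from MCF-monotonicity applied from $t=0$. Under our hypothesis $\psi(t_0-\rho_0^2)\int_{M_{t_0-\rho_0^2}}\Phi\,d\mu \leq C_0$, MCF-monotonicity (with $\phi\equiv 1$) instead propagates to $\psi(t)\int_{M_t}\Phi\,d\mu \leq C_0$ on $[t_0-\rho_0^2, t_0)$, so the same derivation yields
$$\psi(t_3)\int_{M_{t_3}}\phi\Phi\,d\mu \leq \psi(t_2)\int_{M_{t_2}}\phi\Phi\,d\mu + \frac{6C_0\sqrt{t_3-t_2}}{\rho}\sqrt{\int_{t_2}^{t_3}\ov{H}^2\,d\tau}$$
for all $t_0-\rho_0^2 \leq t_2 < t_3 < t_0$. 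Taking $t_3 \nearrow t_0$ with $t_2 = t_0-\beta\rho^2$, using $\Theta(\mathcal{M},x_0,t_0) \geq 1$ together with $\psi(t_0)/\psi(t_0-\rho_0^2) \geq e^{-L/2}$ (which follows from $\int_{t_0-\rho_0^2}^{t_0}\ov{H}^2\,ds \leq L$) and the monotonicity of $\psi$, after dividing through by $\psi(t_0-\rho_0^2)$ and absorbing this normalization into $C_0$ one arrives at
$$e^{-L/2} \leq \int_{M_{t_0-\beta\rho^2}}\phi_{(x_0,t_0),\rho}\Phi_{(x_0,t_0)}\,d\mu + 6C_0\sqrt{\beta L}.$$
I would then choose $\beta_0 = \beta_0(n, C_0, L)$ small enough that $6C_0\sqrt{\beta_0 L} \leq \tfrac12 e^{-L/2}$, which absorbs the error and leaves $\int \phi \Phi\,d\mu \geq \tfrac12 e^{-L/2}$.

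The support of $\phi_{(x_0,t_0),\rho}(\cdot, t_0-\beta\rho^2)$ is contained in $B_{\rho\sqrt{1+2n\beta}}(x_0)$, and on this set $\phi \leq 1$ and $\Phi_{(x_0,t_0)}(\cdot, t_0-\beta\rho^2) \leq (4\pi\beta\rho^2)^{-n/2}$. Plugging these crude bounds into the previous inequality gives
$$\mathcal{H}^n\bigl(M_{t_0-\beta\rho^2}\cap B_{\rho\sqrt{1+2n\beta}}(x_0)\bigr) \geq \tfrac12 (4\pi\beta)^{n/2}\,e^{-L/2}\rho^n.$$
To pass from $B_{\rho\sqrt{1+2n\beta}}$ to $B_\rho$ is a routine reparametrization: for $\beta < 1/(4n)$, set $\tilde\beta := \beta/(1-2n\beta)$ and $\tilde\rho := \rho/\sqrt{1+2n\tilde\beta}$ so that $\tilde\beta\tilde\rho^2 = \beta\rho^2$ and $\tilde\rho\sqrt{1+2n\tilde\beta} = \rho$, giving the constant $\theta(\beta, n) = \tfrac12 (4\pi\tilde\beta)^{n/2}(1+2n\tilde\beta)^{-n/2}$.

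The only obstacle beyond bookkeeping is the nonlocal error term $6C_0\sqrt{\beta L}$ in the almost-monotonicity, which is absent in the MCF setting. It is controlled precisely by the hypothesis $\int \ov{H}^2\,dt \leq L$ through the Cauchy--Schwarz step already performed in deriving \eqref{eq:AlmostMonotonicity}, and the explicit factor $e^{-L/2}$ in the conclusion arises unavoidably from the decay of the weight $\psi$ across the parabolic cylinder $[t_0-\rho_0^2, t_0]$.
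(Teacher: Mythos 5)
Your argument follows the paper's proof essentially step for step: both combine the density lower bound $\Theta\geq 1$ with the localised almost-monotonicity \eqref{eq:LocalDenistyest} restarted at $t_0-\rho_0^2$, use the $L^2$ bound on $\ov H$ to choose $\beta_0$ making the nonlocal error term at most $\tfrac12 e^{-L/2}$, and reparametrize from the dilated ball to $B_\rho$; the paper carries out the change of variables $(\sigma,\alpha)\to(\rho,\beta)$ in the middle of the argument whereas you defer it to the end, but the two are equivalent. The one small slip is that at $t=t_0-\beta\rho^2$ the cutoff is $\phi_{(x_0,t_0),\rho}=\left(1+2n\beta-|x-x_0|^2/\rho^2\right)_+^3\leq(1+2n\beta)^3$, not $\phi\leq 1$, which only changes the explicit value of $\theta(\beta,n)$ (and, once corrected, reproduces the paper's $\theta=\tfrac12(4\pi\beta)^{n/2}(1-2n\beta)^3$).
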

\begin{proof}
 By Corollary \ref{cor:Densityabove1} and \eqref{eq:LocalDenistyest} we have that for $t\in(t_0-\sigma^2,t_0)$ where $\sigma\in(0,\rho_0)$,
\[1\leq \frac{\psi(t)}{\psi(t_0)}\int_{M_{t}} \phi_{(x_0,t_0),\sigma}\Phi_{(x_0,t_0)} d \mu+6\frac{C_0(M_0)}{\psi(t_0)}\sqrt{\frac{t_0-t}{\sigma^2}}\sqrt{\int_{t}^{t_0} \ov{H}^2dt}\ .\]
Rewriting, and setting $t_0-t=\alpha \sigma^2$ for some $\alpha\in(0,1]$,
\begin{flalign*}1&-6\frac{C_0(M_0)}{\psi(t_0)}\sqrt{\frac{t_0-t}{\sigma^2}}\sqrt{\int_{t}^{t_0} \ov{H}^2dt}\\
&\leq \frac{\psi(t)}{\psi(t_0)}\int_{M_{t}} \left(1-\frac{|x-x_0|^2+2n(t-t_0)}{\sigma^{2}}\right)^3_+\frac{1}{(4\pi(t_0-t))^\frac n2}e^{-\frac{|x-x_0|^2}{4(t_0-t)}} d \mu\\
&\leq \frac{\psi(t)}{\psi(t_0)}\frac{(1+2n\alpha)^3}{(4\pi \alpha \sigma^2)^\frac n 2}\mathcal{H}^n(M_{t_0-\alpha \sigma^2}\cap B_{\sqrt{1+2n\a} \sigma}(x_0))\ .
\end{flalign*}
We wish to change constants from $(\alpha, \sigma)$ to $(\beta, \rho)$ by setting $\rho= \sqrt{1+2n \alpha} \sigma$ and $\beta \rho^2 = \alpha \sigma^2$. For this to be possible, we need $\beta<\frac 1 {2n}$ at which point we have that 
\begin{align*}
\beta=\frac{\a}{1+2n\a}&&\a=\frac{\beta}{1-2n\beta}&&\sigma=\sqrt{1-2n\beta}\rho
\end{align*}
 Therefore,
\begin{flalign*}
\frac{(4\pi \beta)^\frac n 2}{(1+2n\alpha)^3}\left(1-6\sqrt{\alpha}C_0\sqrt{L}e^\frac{L}{2}\right)&\leq e^\frac{L}{2}\frac{\mathcal{H}^n(M_{t_0-\beta \rho^2}\cap B_{\rho})}{\rho^n}\ .
\end{flalign*}
To get a positive density we want $\a= \min\{\frac{e^{-L}}{144C_0^2 L},1\}$ which corresponds to \[\beta\leq \min\left\{\frac{1}{144C_0^2Le^{L}+2n}, \frac{1}{1+2n}\right\}=:\beta_0\ .\] Finally we obtain,
\begin{flalign*}
\t(n,\beta)e^{-^\frac{L}{2}}:=\frac 1 2(4\pi \beta)^\frac n 2(1-2n\beta)^3e^{-^\frac{L}{2}}&\leq \frac{\mathcal{H}^n(M_{t_0-\beta \rho^2}\cap B_{\rho}(x_0))}{\rho^n}\ .
\end{flalign*}
\end{proof}

\subsection{A condition for a uniform diameter bound}

In this section we apply the clearing out lemma to obtain conditions under which we may ensure a uniform curvature bound.

\begin{proposition}\label{prop:uniformdiambound}
Suppose that there exists constants $h, C>0$ such that for all $t\in[0, T-h)$
\begin{equation}\int_t^{t+h}\ov{H}^2 dt <C\ . \label{eq:conditionfordiambound}\end{equation}
Then there exists a constant $R=R(C, n, M_0)$ such that for all $t\in [0,T)$
\[\diam(M_t)<R\ .\] 
\end{proposition}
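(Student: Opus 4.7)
The plan is to combine the clearing out lemma (Proposition~\ref{prop:clearingout}) with the uniform area bound $|M_t|\leq |M_0|$ that holds along VPMCF: the total $n$-dimensional area can only accommodate finitely many disjoint balls of a fixed radius each of which carries a uniform lower mass bound supplied by clearing out, and this will force the diameter to be bounded.

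First I would establish a clearing out estimate that is uniform in the basepoint. Fix $t_0\geq h$ and apply Proposition~\ref{prop:clearingout} at every $x_0\in M_{t_0}$ (each of which satisfies $\Theta(\mathcal{M},x_0,t_0)\geq 1$ by Corollary~\ref{cor:Densityabove1}) with $\rho_0=\sqrt{h}$. Hypothesis~\eqref{eq:conditionfordiambound} supplies $L=C$, and using $\psi\leq 1$ together with the crude pointwise bound $\Phi_{(x_0,t_0)}(\,\cdot\,,t_0-h)\leq (4\pi h)^{-n/2}$ and $|M_{t_0-h}|\leq|M_0|$ gives $C_0=|M_0|/(4\pi h)^{n/2}$, independently of $x_0$. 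Fixing, say, $\beta=\beta_0/2$ and $\rho=\sqrt{h}/2$, the lemma then yields a constant $\theta_0=\theta_0(n,C,h,|M_0|)>0$ such that
\[
\mathcal{H}^n(M_{t_0-\beta\rho^2}\cap B_\rho(x_0))\geq \theta_0\,\rho^n \qquad \text{for every }x_0\in M_{t_0}.
\]

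Next I would run a packing plus chain argument. Let $\{p_1,\dots,p_N\}\subset M_{t_0}$ be maximal with pairwise distances at least $2\rho$; then the balls $\{B_\rho(p_i)\}$ are pairwise disjoint while $\{B_{2\rho}(p_i)\}$ covers $M_{t_0}$. Summing the clearing out estimate and using $|M_{t_0-\beta\rho^2}|\leq|M_0|$,
\[
N\theta_0\rho^n\leq|M_0|,
\]
so $N$ is bounded by a constant depending only on $n,C,h,|M_0|$. Since $M_{t_0}$ is connected, the nerve of $\{B_{2\rho}(p_i)\}$ is connected, so any two centres can be joined by a chain of at most $N$ balls with consecutive centres at distance less than $4\rho$; hence $\diam\{p_i\}\leq 4\rho(N-1)$ and $\diam(M_{t_0})\leq 4\rho N$, independent of $t_0$. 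For the initial interval $t_0\in[0,h]$ the bound $\diam(M_{t_0})\leq\diam(M_0)+2\sqrt{hC}$ follows directly from Lemma~\ref{lemma:derivativeDiam} and Cauchy--Schwarz, since $\int_0^{t_0}|\ov{H}|\,d\tau\leq\sqrt{h\,C}$.

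The main technical point is verifying the clearing out hypotheses with constants uniform in the basepoint $x_0\in M_{t_0}$; the crude pointwise Gaussian estimate above is designed precisely for this, at the price of a dimensional factor that we do not need to track sharply. A minor caveat is that the chain argument uses connectedness of $M_{t_0}$: for a multi-component $M^n$ the argument would require an additional input to control inter-component distances (for instance by applying the packing argument componentwise together with Lemma~\ref{lemma:derivativeDiam}), but connectedness is the natural setting here.
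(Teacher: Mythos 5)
Your proposal is correct and uses the same underlying strategy as the paper's proof: establish a clearing-out lower density bound (Proposition~\ref{prop:clearingout}) with constants uniform in the basepoint and time (supplied by the hypothesis \eqref{eq:conditionfordiambound} with $\rho_0=\sqrt{h}$ and the area monotonicity $|M_t|\leq|M_0|$), and then pack disjoint balls to force a diameter bound. The only substantive difference lies in the packing step: you take a maximal $2\rho$-separated net on $M_{t_0}$ with $\rho=\sqrt{h}/2$ fixed and then run a nerve/chain argument to convert the bound on the number of net points into a diameter bound, whereas the paper chooses $\rho\in[\rho_0/4,\rho_0)$ so that $\diam(M_t)=2N\rho$ exactly and places at least $N$ disjoint balls directly along the segment realizing the diameter. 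Both versions implicitly use connectedness of $M^n$, which you sensibly flag. Your treatment of the initial interval $t_0\in[0,h]$ — integrating Lemma~\ref{lemma:derivativeDiam} and applying Cauchy--Schwarz to get $\diam(M_{t_0})\leq\diam(M_0)+2\sqrt{hC}$ — is a touch cleaner than the paper's appeal to Proposition~\ref{prop:dtdiam}, which nominally requires $V_0\neq0$; given the explicit $L^2$ hypothesis here, your route avoids invoking that extra assumption.
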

\begin{proof}
We apply the clearing out lemma (Proposition \ref{prop:clearingout}) with $\rho_0=\sqrt{h}$ on any time interval of the form $[t,t+h)$. On such an interval by assumption we have the $L^2$ bound on $\ov{H}$, and the second assumption is automatically fulfilled for some $C_0=C_0(M_0)$ by the monotonicity formula. Therefore we have, for $\beta=\frac{\beta_0(n, C, M_0)}{2}$ there is a $\tilde{\theta}=\theta(\beta, n)e^{-\frac C 2}$ such that $\mathcal{H}^n(M_{t_0-\beta\rho^2}\cap B_{\rho})\geq \tilde{\theta}\rho^n$. 

 Suppose that $t>h\beta$. We have that $|M_t|\leq |M_0|$ and there exists $\frac{\rho_0}4\leq \rho<\rho_0$ such that $\diam(M_t)=2N\rho$ for $N\in \mathbb{N}$. Then there must be at least $N$ disjoint ambient balls of radius $\rho$ whose centers lie on $M_t$. We use the area of $M_t$ in these balls as lower bounds. We see that (as $t>\beta \rho^2$) \[|M_0|>|M_{t-\beta\rho^2}|>N\tilde \theta \rho^n=\tfrac{1}{2}\tilde\theta \rho^{n-1}\diam (M_t) .\]
As $\rho\geq \frac {\rho_0} 4$ and $\tilde\theta$ is fixed, we see that $\diam (M_t)\leq 2 \tilde\theta^{-1}\rho_0^{1-n}4^{n-1}|M_0|$.\\
If $t<h \beta$, we may instead apply Proposition \ref{prop:dtdiam} to see that  $\diam (M_t)\leq C(M_0, h, \beta)$.

\end{proof}
\begin{remark}
The condition \eqref{eq:conditionfordiambound} in the above Proposition may also be replaced by \\
$\int_t^{t+h}\int_{M_t} H^2d\mu dt <C$ or equivalently $|\ddt{}|M_t||<C$. Clearly a uniform bound on $\ov{H}$ also implies this condition. 
\end{remark}

\section{Long time behaviour of VPMCF}\label{sec:longtime}

In this section we consider the flow under the assumptions that 
\begin{enumerate}[label={\alph*)}]
\item the flow exists for all time, and
\item the diameter is uniformly bounded, that is, there exists an $R_0>0$ such that
\begin{equation}
\diam (M_t)\leq R_0 \label{eq:diambound}
\end{equation}
for all $t\in[0,\infty)$.
\end{enumerate}
By Proposition \ref{prop:uniformdiambound} we can replace \eqref{eq:diambound} with \eqref{eq:conditionfordiambound}. Both of the above assumptions are therefore implied by a curvature bound on the flow which is uniform in time.

\begin{defses}
We define a ``Newton inequality-like'' extended isoperimetric ratio by the scale invariant quantity
\[\mathcal{I}(M_t) = \frac{n+1}{n}\ov{H}\frac{\operatorname{Vol}^{n+1}(M_t)}{|M_t|}\ .\]
\end{defses}
\begin{remark}
This isoperimetric ratio appears to be a natural one to consider: If we think of the Alexandrov--Fenchel inequalities as an ``averaged'' version of the Mclaurin inequalities for symmetric polynomials, then the above would be a ratio similar to an ``averaged'' Newton inequalities. Importantly, the Newton inequalities hold without any assumptions on the curvature cone.
\end{remark}
\begin{remark} \label{rem:AlexFenchel}
Denoting $\omega_{n}$ for the area of the $n$-sphere, if we write $v_{n+1}=\frac{n+1}{\omega_{n}}\on{Vol}^{n+1}(M_t)$, $v_n=|M_t|$ and $v_{n-1}=\frac{1}{n}\int_{M_t} H d\mu$ then, while $M_t$ is mean convex and starshaped, then it bounds some domain $\Omega_t$ and we know that, by the Alexandrov--Fenchel inequalities, $v_{n+1}^\frac{1}{n+1}\leq v_{n}^\frac{1}{n}\leq v_{n-1}^\frac{1}{n-1}$ with equalities if and only if $M_t$ is a sphere (see \cite{GuanLiQuermass} -- here we are writing, $v_{n+1-k}=\frac{V_{n+1-k}(\Omega_t)}{V_{n+1-k}(B)}$ in that paper). Applying these we have that
\[1\leq \frac{v_{n+1}}{v_n^\frac{n+1}{n}}\leq \mathcal{I}(M_t)=\frac{v_{n-1}v_{n+1}}{v_n^2}\leq \frac{v_{n-1}}{v_n^\frac{n-1}{n}}=\frac{1}{\mathcal{I}_1(\Omega_t)^{n-1}}\ ,\] 
where $\mathcal{I}_1(\Omega_t)$ is the first Quermass integral 
\cite[equation (6)]{GuanLiQuermass}. Equalities hold in the abpve if and only if $M_t$ is a sphere. Therefore if $M_t$ is starshaped and mean convex with $\mathcal{I}(M_t)=1$, $M_t$ is a round sphere.
\end{remark}
Under the above assumptions we may improve the assumptions of section \ref{sec:diamL2} to get the following:
\begin{theorem}\label{thm:asymptotics}
Suppose that $M_t$ satisfies VPMCF for all $t>0$ with $V_0\neq 0$ and has uniformly bounded diameter as in \eqref{eq:diambound}. Then there exists a subsequence $t_i \ra \infty$ such that either
\[\mathcal{I}(M_{t_i})\ra 1 \qquad \text{ or }\qquad \mathcal{I}(M_{t_i})\ra 0\]as $i\ra\infty$. Equivalently, there exists a subsequence such that is the extended isoperimetric ratio goes to that of a sphere or of a minimal surface.
\end{theorem}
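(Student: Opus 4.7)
The plan is to couple the monotonicity of area along the flow with a short algebraic identity relating $\mathcal{I}(M_t)-1$ to an $L^2$-defect quantity, and then invoke the time-integrability of that defect to extract a convergent subsequence.

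First I would observe that $\frac{d}{dt}|M_t| = -\int_{M_t}(H-\ov{H})^2\,d\mu$, so the area is non-increasing. Together with a strict positive lower bound on $|M_t|$ this forces $|M_t|\searrow L$ for some $L>0$. The lower bound comes directly from the volume formula $V_0=\frac{1}{n+1}\int_{M_t}\langle X-x,\nu\rangle\,d\mu$ with $x\in M_t$, which with the uniform diameter bound gives $|M_t|\geq (n+1)|V_0|/R_0$. Integrating the area identity then yields
\[\int_0^\infty \int_{M_t}(H-\ov{H})^2\,d\mu\,dt = |M_0|-L<\infty,\]
so there exists a subsequence $t_i\to\infty$ along which $\int_{M_{t_i}}(H-\ov{H})^2\,d\mu\to 0$.

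The main computation is to substitute $\partial_t X = -(H-\ov{H})\nu$ into formula \eqref{eq:barH} and rearrange using $\mathcal{I}(M_t) = \frac{n+1}{n}\ov{H}\frac{V_0}{|M_t|}$ into the identity
\[n|M_t|\bigl(\mathcal{I}(M_t)-1\bigr) = (n+1)V_0\ov{H}-n|M_t| = -\int_{M_t}(H-\ov{H})\langle \nu, X-x\rangle\,d\mu\]
valid for any $x\in\R^{n+1}$. Choosing $x$ so that $M_t\subset \ov{B}_{R_0}(x)$ and applying Cauchy--Schwarz yields
\[|\mathcal{I}(M_t)-1|^2\leq \frac{R_0^2}{n^2|M_t|}\int_{M_t}(H-\ov{H})^2\,d\mu.\]
Along the chosen subsequence $t_i$, the right-hand side tends to $0$ since $|M_{t_i}|\geq L>0$, so $\mathcal{I}(M_{t_i})\to 1$.

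The step I expect to be the crux of the argument is spotting the identity above: it is the ``right'' algebraic rearrangement that converts the geometric content of $\mathcal{I}-1$ into the single $L^2$-dissipated quantity that is already known (via the energy identity) to be integrable in time. Once it is in place, the analysis is routine. Under the stated hypotheses the strict lower bound on $|M_t|$ in fact rules out the second (minimal-surface) branch $\mathcal{I}\to 0$, so the first alternative is what actually occurs here; the ``or $0$'' phrasing in the statement allows for degenerations that cannot happen when $V_0\neq 0$ and the diameter is controlled.
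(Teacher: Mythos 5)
Your proof is correct and takes a genuinely different, more direct route than the paper's. The paper argues by contradiction: assuming $\mathcal{I}(M_t)$ eventually avoids neighbourhoods of both $0$ and $1$, it uses continuity of $\mathcal{I}$ to locate it in one of the intervals $(-\infty,-\e)$, $(\e,1-\e)$, $(1+\e,\infty)$, then invokes Propositions~\ref{prop:lessthaniso} and~\ref{prop:morethaniso} to obtain an $L^2$-in-time bound on $\ov{H}$ on $[T,\infty)$, which is incompatible with the pointwise lower bound $|\ov{H}|\geq c>0$ forced by $|\mathcal{I}(M_t)|\geq\e$ and $|M_t|\geq c'$. Your argument bypasses those two propositions entirely: it uses the dissipation identity $\tfrac{d}{dt}|M_t|=-\int(H-\ov H)^2\,d\mu$ together with the lower area bound $|M_t|\geq (n+1)|V_0|/R_0$ to extract a subsequence along which the dissipation rate vanishes, and then notices that the rearrangement of \eqref{eq:barH} into $n|M_t|\bigl(\mathcal{I}(M_t)-1\bigr)=-\int_{M_t}(H-\ov H)\ip{\nu}{X-x}\,d\mu$, combined with Cauchy--Schwarz, controls $|\mathcal{I}(M_t)-1|$ by exactly that dissipation rate. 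This is shorter and yields a sharper conclusion: under the stated hypotheses there is always a subsequence with $\mathcal{I}(M_{t_i})\to 1$, so the disjunction in the statement can be resolved in favour of the first alternative. One imprecision in your closing paragraph: your argument shows the first alternative occurs on \emph{some} subsequence, but it does not rule out a different subsequence with $\ov{H}\to 0$ and hence $\mathcal{I}\to 0$; that exclusion is what the paper's remark after Corollary~\ref{cor:meanconvexstartshaped} addresses, and it relies on the additional hypothesis of a uniform curvature bound. So ``establishes the first alternative'' would be accurate where you wrote ``rules out the second.''
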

\begin{cor}\label{cor:meanconvexstartshaped}
Any mean convex starshaped solution of VPMCF with uniformly bounded curvature exists for all times and converges to a sphere.
\end{cor}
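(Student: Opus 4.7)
The plan is to feed the uniform curvature bound into Theorem~\ref{thm:asymptotics} and then rule out one of the two alternatives it produces by means of the Alexandrov--Fenchel inequalities of Remark~\ref{rem:AlexFenchel}.

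First I would check the hypotheses of Theorem~\ref{thm:asymptotics}. A uniform bound on $|A|$ rules out singularities of VPMCF, so $T=\infty$. It also bounds $H$ and hence $\ov{H}$ pointwise, so the integral condition \eqref{eq:conditionfordiambound} holds trivially; Proposition~\ref{prop:uniformdiambound} then supplies the uniform diameter bound \eqref{eq:diambound}. A star-shaped hypersurface bounds a star-shaped domain of strictly positive Lebesgue measure, so $V_0\neq 0$. Theorem~\ref{thm:asymptotics} now provides a sequence $t_i\to\infty$ with either $\mathcal{I}(M_{t_i})\to 0$ or $\mathcal{I}(M_{t_i})\to 1$. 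Since $M_t$ is mean convex and star-shaped for every $t$, the inequality $\mathcal{I}(M_t)\geq 1$ from Remark~\ref{rem:AlexFenchel} rules out the first alternative, forcing $\mathcal{I}(M_{t_i})\to 1$.

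Next I would promote this numerical statement to a geometric one. The uniform $|A|$-bound together with Shi-type estimates (as in \cite[Proposition~3.22]{Eckerbook}) gives uniform bounds on all derivatives of $A$; combined with the diameter bound, this lets me, after translating each $M_{t_i}$ by a suitably chosen base point, extract a further subsequence along which $M_{t_i}$ converges smoothly to a compact immersed limit $M_\infty$. Mean convexity, star-shapedness, and the quantity $\mathcal{I}$ all pass to this smooth limit (the latter because $\ov{H}$, $|M_{t_i}|$ and $V_0$ all converge to the corresponding quantities on $M_\infty$), so $M_\infty$ is mean convex, star-shaped, and satisfies $\mathcal{I}(M_\infty)=1$. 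The equality case recorded in Remark~\ref{rem:AlexFenchel} then identifies $M_\infty$ as a round sphere, necessarily of enclosed volume $V_0$.

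The main remaining obstacle is upgrading subsequential convergence to full convergence. I would handle this by invoking the stability result of Escher--Simonett~\cite{EscherSimonett}: since the subsequential convergence is in $C^\infty$ and the limit is an embedded round sphere, for $i$ large enough $M_{t_i}$ lies in the $h^{1+\beta}$-basin of attraction of that sphere, so the VPMCF restarted at time $t_i$ converges smoothly to a round sphere of volume $V_0$. As this restarted flow is just the tail of the original one, $M_t$ converges smoothly to this sphere as $t\to\infty$.
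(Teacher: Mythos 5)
Your proposal follows the same route as the paper's proof: use the uniform $|A|$ bound to get $T=\infty$, a diameter bound, and smooth subsequential convergence; invoke Theorem~\ref{thm:asymptotics} together with the Alexandrov--Fenchel lower bound $\mathcal{I}(M_t)\geq 1$ from Remark~\ref{rem:AlexFenchel} to conclude $\mathcal{I}\to 1$; identify the limit as a round sphere via the equality case; and upgrade subconvergence to convergence with Escher--Simonett. You simply spell out several steps the paper leaves implicit (why $T=\infty$, why the diameter bound holds, why $V_0\neq 0$), so the argument is the same but carried out more carefully.
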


\begin{proof}
Of Corollary~\ref{cor:meanconvexstartshaped}.
First we remark that the uniform bound on $|A|$ implies uniform bounds on all derivatives of $A$ and thus the flow subsequentially converges smoothly to a limiting object. Using \cite{GuanLiQuermass} as in Remark~\ref{rem:AlexFenchel} we know that $\mathcal{I}(M_t) \geq 1$ along the flow. So the above theorem implies that the limiting object has extended isoperimetric ratio equal to one and must be the round sphere. By \cite{EscherSimonett} we improve subconvergence to convergence.
\end{proof}

\begin{remark}
\begin{itemize}
 \item  If we have a VPMCF that stays Alexandrov immersed for all times and $|A|\leq C$ uniformly, then for each $t_j\to\infty$, the flow subconverges smoothly to a closed limiting surface with constant mean curvature ($H(\cdot,\tilde t_j)$ must converge to $\ov{H}_\infty$ see e.g.\ \cite{HuiskenConvexVPMCF}). In this Alexandrov immersed case the limiting object must be a sphere -- and thus $\mathcal{I}(M_\infty)=1$. This follows from Alexandrov's theorem: A closed Alexandrov immersion having constant mean curvature must be a sphere \cite{Alexandrov}. We can also improve subconvergence to convergence by \cite{EscherSimonett} again. 
 \item In general, if no singularity appears, i.e.\ $T=\infty$ and $|A|\leq C$ (uniformly in $t$), then $\mathcal{I}(M_{t_j})\to 0$ cannot happen because any limiting surface satisfies $|M_\infty|>0$, so  $\mathcal{I}(M_{\tilde t_j})\to 0$ would imply we found a closed minimal surface as limiting object which is clearly impossible. At this point we do not know whether $  \mathcal{I}(M_{\infty})=1$ implies that $   M_{\infty}$ is a sphere. We only know this for the starshaped case and for Alexandrov immersions.   

\end{itemize}

\end{remark}

In order to show Theorem~\ref{thm:asymptotics} we first improve our estimates on $\ov{H}^2$ in the two cases of $\mathcal{I}(M_t)\leq 1$ (where $\mathcal{I}(\bb{S}^n)=1$) and $\mathcal{I}(M_t)\geq 1$.
\begin{proposition}\label{prop:lessthaniso}
Suppose that $M_t$  as in \eqref{eq:diambound} and $V_0\neq 0$. Let $\a\in (0,1)$, and suppose that on the time interval $[t_1, t_2]$ we may estimate
\[\ov{H}(n+1)V_0\leq \alpha n |M_t|\ .\]
Then we have that
\begin{align*}
 \int_{t_1}^{t_2}|\ov{H}|^2dt
 &\leq \max\left(\frac{\alpha^2}{(1-\alpha)^2}, 1\right)\frac{R^2_0
 }{2(n+1)^2V_0^2}|M_{t_1}|^2\ .
\end{align*}
Note that $\frac{\alpha^2}{(1-\alpha)^2 }\geq 1$ if and only if $\alpha\geq\frac{1}{2}$. 
\end{proposition}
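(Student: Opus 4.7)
The plan is to sharpen the estimate of Proposition~\ref{prop:ElenasArgument2} by exploiting the one-sided hypothesis $\ov{H}(n+1)V_0 \leq \alpha n|M_t|$ in order to absorb the ``$n^2|M_0|^2$'' constant term that weakened the earlier bound. Starting from identity~\eqref{eq:barH}, I would write $A(t) := (n+1)V_0\ov{H} = n|M_t| + I(t)$, where $I(t) := \int_M \ip{\partial_t X}{X-x(t)}d\mu$, and pick $x(t)\in\R^{n+1}$ such that $M_t \subset B_{R_0}(x(t))$ (possible by the diameter bound). Since $\int_M|\partial_t X|^2 d\mu = -\ddt{|M_t|}$, Cauchy--Schwarz gives
\begin{align*}
I(t)^2 \leq R_0^2\,|M_t|\,\bigl(-\ddt{|M_t|}\bigr).
\end{align*}

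The heart of the argument is a case split on the sign of $A$. If $A\geq 0$, the hypothesis yields $0\leq A\leq \alpha n|M_t|$; combined with $A = n|M_t|+I$ this forces $-I\geq (1-\alpha)n|M_t|\geq 0$, so $(1-\alpha)^2 n^2|M_t|^2\leq I^2$, and hence $A^2\leq \alpha^2 n^2|M_t|^2\leq \tfrac{\alpha^2}{(1-\alpha)^2}I^2$. If instead $A<0$, then $-I>n|M_t|\geq 0$, so $|A|=|I|-n|M_t|\leq |I|$ and $A^2\leq I^2$. Combining the two cases,
\begin{align*}
A^2 \leq \max\!\left(\tfrac{\alpha^2}{(1-\alpha)^2},\,1\right) R_0^2\,|M_t|\,\bigl(-\ddt{|M_t|}\bigr).
\end{align*}

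Dividing by $(n+1)^2V_0^2$ produces a pointwise bound on $\ov{H}^2$. Using the monotonicity $\ddt{|M_t|}\leq 0$ along VPMCF, one has
\begin{align*}
\int_{t_1}^{t_2} |M_t|\bigl(-\ddt{|M_t|}\bigr)\,dt = \tfrac{1}{2}\bigl(|M_{t_1}|^2-|M_{t_2}|^2\bigr) \leq \tfrac{1}{2}|M_{t_1}|^2,
\end{align*}
and integrating the pointwise bound delivers the claimed inequality. The only nontrivial ingredient is the two-case bookkeeping for $A^2$; the remaining steps (Cauchy--Schwarz and $|M_t|$-monotonicity) are already present in Proposition~\ref{prop:ElenasArgument2}, so I do not expect any genuine obstacle.
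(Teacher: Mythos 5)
Your proof is correct and follows essentially the same route as the paper's: both start from identity~\eqref{eq:barH}, apply Cauchy--Schwarz with the $R_0$ diameter bound, and split into the same two cases according to the sign of $(n+1)V_0\ov{H} = n|M_t| + I$ (the paper phrases this as $a<b$ versus $a>b$ with $a=-I$, $b=n|M_t|$), obtaining the factor $\max\bigl(\tfrac{\alpha^2}{(1-\alpha)^2},1\bigr)$ in each sub-case identically before integrating $|M_t|\bigl(-\ddt{}|M_t|\bigr)$. The only differences are cosmetic notation.
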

\begin{proof}
In Proposition \ref{prop:ElenasArgument2}, we estimated $\ov{H}$ using Young's inequality. Here we estimate more carefully using the above assumption.

Using \eqref{eq:HsuppisArea}, we note that,
\begin{flalign}
\begin{split}
\int_{M_t} \ip{X_t}{X-x}d \mu&=\ov{H}\int_M\ip{\nu}{X-x}d \mu-\int_M H\ip{\nu}{X-x}d \mu \\
&=\ov{H}(n+1)V_0-n|M_t| \\
&\leq - n(1-\a) |M_t|
\end{split}\label{eq1}
\end{flalign}
We set $b=n|M_t|$ and $a=-\int_{M_t} \ip{X_t}{X-x}d \mu\geq 0$. Then we consider two cases. First, we assume that $a<b$. We use $a= \alpha a + (1-\alpha) a \geq (1-\alpha)b$ which implies $|b-a|=b-a \leq \frac{\alpha}{1-\alpha} a = \frac{\alpha}{1-\alpha} |a|$. On the other hand, if $a>b$, then we know that $|b-a| = a-b < a=|a|$ because $b>0$. Putting this together, we get that $|b-a|\leq \max(1,\frac{\alpha}{1-\alpha})|a|$ and thus
\[\left|\int_{M_t} \ip{X_t}{X-x}d \mu+n|M_t|\right|\leq \max\left(\frac{\alpha}{1-\alpha},1\right) \left|\int_M \ip{X_t}{X-x}d \mu \right| \]
and 
\[|\ov{H}|\leq  \max\left(\frac{\alpha}{1-\alpha},1\right)\frac{\left|\int_M \ip{X_t}{X-x}d \mu \right|}{(n+1) |V_0|}\ .\]

Estimating as before we have
\begin{align*}
 |\ov{H}|^2
 &\leq \max\left(\frac{\alpha^2}{(1-\alpha)^2},1\right)\frac{\int_M |H-\ov{H}|^2d \mu\int_M |X|^2d \mu}{(n+1)^2V_0^2}\\
 &\leq -\max\left(\frac{\alpha^2}{(1-\alpha)^2},1\right)\frac{R_0^2}{2(n+1)^2V_0^2}\ddt{}|M_t|^2\ .
\end{align*}
And so
\[\int_{t_1}^{t_2}|\ov{H}|^2dt \leq  \max\left(\frac{\alpha^2}{(1-\alpha)^2},1\right)\frac{R^2_0}{2(n+1)^2V_0^2}[|M_{t_1}|^2-|M_{t_2}|^2]\ .\]

\end{proof}

We may now repeat this but for the opposite inequality of the (extended) isoperimetric(/Alexandrov-Fenchel) inequality.
\begin{proposition}\label{prop:morethaniso}
Suppose that $M_t$ has $\operatorname{diam}(M_t)\leq R$ and $V_0\neq 0$. If on the time interval $[t_1, t_2]$ we may estimate
\[\ov{H}(n+1)V_0\geq (1+\beta) n |M_t|\ ,\]
for some  $\beta>0$, then we have that
\[\int_{t_1}^{t_2}|\ov{H}|^2 dt 
\leq \frac{R^2(1+\beta^{-1})^2}{2(n+1)^2V_0^2}|M_{t_1}|^2 \ .\]
\end{proposition}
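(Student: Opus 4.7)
The plan is to mirror the argument of Proposition~\ref{prop:lessthaniso} but exploit the reverse inequality on $\ov{H}(n+1)V_0$ relative to $n|M_t|$. The starting point is again the identity
\[\int_{M_t}\ip{X_t}{X-x}d\mu = \ov{H}(n+1)V_0 - n|M_t|,\]
which was derived in \eqref{eq1} using \eqref{eq:HsuppisArea} and the volume-preservation. Under our hypothesis $\ov{H}(n+1)V_0 \geq (1+\beta)n|M_t|$ this integral is \emph{strictly positive}, with
\[a := \int_{M_t}\ip{X_t}{X-x}d\mu \geq \beta n |M_t|.\]

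The key algebraic observation is that, since $a \geq \beta n|M_t|$, we have $n|M_t| \leq \beta^{-1} a$, and therefore
\[\ov{H}(n+1)V_0 = a + n|M_t| \leq (1+\beta^{-1})\,a,\]
both terms being nonnegative. This replaces the $\max(\alpha/(1-\alpha),1)$-style bound in Proposition~\ref{prop:lessthaniso} by the cleaner factor $(1+\beta^{-1})$.

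Next I would apply Cauchy--Schwarz exactly as in Proposition~\ref{prop:ElenasArgument2}: using $X_t = -(H-\ov{H})\nu$ and the diameter bound $|X-x|\leq R$ (choosing $x$ so that $M_t \subset B_R(x)$),
\[|\ov{H}|^2 \leq \frac{(1+\beta^{-1})^2}{(n+1)^2 V_0^2}\int_{M_t}|X_t|^2 d\mu\int_{M_t}|X-x|^2 d\mu \leq \frac{(1+\beta^{-1})^2 R^2}{(n+1)^2 V_0^2}\,|M_t|\Bigl(-\ddt{}|M_t|\Bigr),\]
where we used $\int_{M_t} |X_t|^2 d\mu = -\ddt{}|M_t|$. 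Recognising the right-hand side as $-\tfrac{1}{2}\ddt{}|M_t|^2$ up to the constant factor, I would integrate over $[t_1,t_2]$ and use the area-decreasing property $|M_{t_2}|\leq |M_{t_1}|$ to obtain
\[\int_{t_1}^{t_2}|\ov{H}|^2 dt \leq \frac{(1+\beta^{-1})^2 R^2}{2(n+1)^2 V_0^2}\bigl(|M_{t_1}|^2 - |M_{t_2}|^2\bigr) \leq \frac{(1+\beta^{-1})^2 R^2}{2(n+1)^2 V_0^2}|M_{t_1}|^2,\]
which is the stated estimate. There is no real obstacle here; the only subtlety compared with Proposition~\ref{prop:lessthaniso} is keeping track of signs so that $a$ and $n|M_t|$ are added rather than cancelling, which is exactly what makes the factor $(1+\beta^{-1})$ (as opposed to something $\alpha$-dependent) appear.
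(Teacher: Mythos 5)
Your proof is correct and follows the same route as the paper: the identity $\int_{M_t}\ip{X_t}{X-x}d\mu = \ov{H}(n+1)V_0 - n|M_t|$, the algebraic bound $n|M_t|\leq\beta^{-1}a$ giving $|\ov{H}(n+1)V_0|\leq(1+\beta^{-1})|a|$, then Cauchy--Schwarz, the diameter bound, and integration of $-\tfrac12\ddt{}|M_t|^2$. The only cosmetic difference is that you spell out the algebraic step which the paper summarises as ``the triangle inequality.''
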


\begin{proof}
We note that from the assumption,
\begin{flalign*}\int_{M_t} \ip{X_t}{X-x}d \mu&=\ov{H}(n+1)V_0-n|M_t| \\
&\geq  \beta n |M_t|\ .
\end{flalign*}
This time, may directly estimate using the triangle inequality that
\[|\ov{H}|\leq \frac{(1+\beta^{-1})\left|\int_M \ip{X_t}{X-x}d \mu \right|}{(n+1)|V_0|}\ .\]
so, as above
\[|\ov{H}|^2\leq -\frac{R^2_0(1+\beta^{-1})^2}{2(n+1)^2V_0^2}\ddt{}|M_t|^2\ ,\]
and so 
\[\int_{t_1}^{t_2}|\ov{H}|^2 dt \leq \frac{R^2_0(1+\beta^{-1})^2}{2(n+1)^2V_0^2}[|M_{t_1}|^2-|M_{t_2}|^2]\ .\]
\end{proof}

\begin{proof}[Proof of theorem \ref{thm:asymptotics}]
Suppose not. Then there exists a $T, \e>0$ such that for all $t>T$, $\mathcal{I}(M_t)\in(-\infty, -\e)\cup(\e,1-\e)\cup(1+\e,\infty)$. By continuity we have that $\mathcal{I}(M_t)$ is in one of the intervals $(-\infty, -\e)$, $(\e,1-\e)$ or $(1+\e,\infty)$ for all $t>T$. 
We particularly have that $\ov{H}(n+1) V_0 = n|M_t| \mathcal{I}(M_t)$ is either not larger than $(1-\epsilon)n|M_t|$ or not less than $(1+\epsilon)n|M_t|$. 
As a result, by applying Propositions \ref{prop:lessthaniso} and \ref{prop:morethaniso} we have one of
\[\int_T^\infty \ov{H}^2dt \leq \max\left(\frac{\epsilon^2}{(1-\epsilon)^2},1\right) \frac{R^2_0}{2(n+1)^2V_0^2}|M_{t_1}|^2\ .\]
or
\[\int_T^\infty \ov{H}^2dt \leq \frac{R^2_0(1+\e^{-1})^2}{2(n+1)^2V_0^2}|M_{t_1}|^2 \ .\]
Using the isoperimetric inequality, we have that $|\ov{H}|= \frac{n|M_t|}{(n+1)|V_0|}|\mathcal I(M_t)|\geq c>0$ in both cases. This is clearly impossible.

\end{proof}

\begin{cor}
If a solution exists for all $t\geq 0$ and for all this time $\ov{H}(n+1)V_0\leq  \alpha n|M_t|$ for some $\alpha\in(0,1)$ then there exist $t_i$ such that $\ov{H}(t_i)\ra 0$ as $i\ra \infty$.
\end{cor}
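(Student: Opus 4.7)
The plan is to apply Proposition~\ref{prop:lessthaniso} on every finite subinterval $[0,T]\subset[0,\infty)$ in order to deduce $\ov{H}\in L^2([0,\infty))$, and then extract a subsequence along which $\ov{H}\to 0$ from this integrability together with the continuity of $\ov{H}$.

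More precisely, under the standing assumptions of Section~\ref{sec:longtime} the extrinsic diameter satisfies $\diam(M_t)\leq R_0$ uniformly in $t$. The hypothesis $\ov{H}(n+1)V_0\leq \alpha n |M_t|$ is assumed to hold on all of $[0,\infty)$, so Proposition~\ref{prop:lessthaniso} applies on any interval $[0,T]$ with $t_1=0$ and $t_2=T$. Since $|M_{t_1}|=|M_0|$ is fixed and the constant on the right-hand side of that proposition depends only on $\alpha$, $R_0$, $n$, $V_0$, and $|M_0|$, the resulting bound
\[ \int_0^T \ov{H}^2\,dt \leq \max\!\left(\frac{\alpha^2}{(1-\alpha)^2},\,1\right)\frac{R_0^2}{2(n+1)^2 V_0^2}\,|M_0|^2 \]
is uniform in $T$. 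Sending $T\to\infty$ then yields $\ov{H}\in L^2([0,\infty))$.

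Finally, I would argue by contradiction: suppose no subsequence $t_i\to\infty$ with $\ov{H}(t_i)\to 0$ exists. Since $\ov{H}(t)$ depends continuously on the smoothly evolving immersion, this would force the existence of $\delta>0$ and $T_0\geq 0$ with $|\ov{H}(t)|\geq\delta$ for every $t\geq T_0$; but then
\[ \int_0^\infty \ov{H}^2\,dt \geq \int_{T_0}^\infty \delta^2\,dt = +\infty, \]
contradicting the $L^2$ bound just established. The bulk of the work is already contained in Proposition~\ref{prop:lessthaniso}; the only point to verify carefully is that the constant produced there is genuinely independent of the length of the time interval once the uniform diameter bound is in force, so no serious obstacle is expected.
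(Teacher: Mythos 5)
Your proposal is correct and is precisely the argument the authors intend (the paper leaves this corollary without an explicit proof, as it follows immediately from Proposition~\ref{prop:lessthaniso} applied with $t_1=0$ and $t_2=T$, uniformly in $T$). One small remark: the appeal to continuity of $\ov{H}$ in the final step is unnecessary — the negation of the conclusion already says $\liminf_{t\to\infty}|\ov{H}(t)|>0$, which gives $|\ov{H}(t)|\geq\delta$ for all large $t$ without invoking continuity, and this alone contradicts the uniform $L^2$ bound.
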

\section{Barriers and preservation of Alexandrov immersions} \label{sec:Alex}

In Corollary~\ref{cor:Alexhleq0} we already saw that the property of being Alexandrov immersed \underline{and} having $\ov{H}\leq 0$ is not preserved for long under the VPMCF. Now we will study under which conditions the property of being Alexandrov immersed is preserved for all times. We will also describe how barriers for the VPMCF can be constructed. \\

We start by introducing some notation. We will use descriptive superscripts to distinguish between manifold, e.g.\ $M^\text{outer}_t$, $M^\text{Inner}$ and so on. For example, we let $f^\ast:[0,T) \ra \bb{R}$ be some smooth function. 
 
\begin{defses}
We will say a flow $M_t^\ast$ satisfies \emph{Forced Mean Curvature Flow} with forcing term $f^\ast$ (and write this as FMCF($f^\ast$)) if it has a time dependent parametrisation $X^\ast:N^n \times [0,T)\ra\bb{R}^{n+1}$ (and chosen normal $\nu^\ast$) so that
\[\ip{\ddt{X^\ast}+H^\ast\nu^\ast}{\nu^\ast}= f^\ast\ .\]
\end{defses}
We will use this to produce barriers for the flow.

\begin{defses}
Suppose that $M_t^1$ satisfies FMCF($f^1$) and $M_t^2$ satisfies FMCF($f^2$). Suppose that there is an open ball $B_{r}(x)$ so that $M_t^1$ has non-trivial intersection with $B_{r}(x)$ so that $B_{r}(x)\setminus M_t^1$ is made up of two connected components. 

We define the component which the normal points away from to be the \emph{local interior} of $M_t^1$, and the component the normal points into to be the \emph{local exterior} of $M_t^1$. 

We will say that $M^2_t$ is \emph{locally inside} $M_t^1$ if it is contained in the local interior of $M_t^1$. We will say that $M^2_t$ is \emph{locally outside} $M_t^1$ if it is contained in the local exterior of $M_t^1$
\end{defses}

\begin{lemma}\label{lem:InnerOuterLocal}
Suppose that $M_t^\text{out}$ satisfies FMCF($f^\text{out}$) and $M_t^\text{in}$ satisfies FMCF($f^\text{in}$). Suppose that in open some ball $B_{2r}(x)$, $M^\text{in}_0$ is locally inside $M^\text{out}_0$. Given $\sigma \in\{1,-1\}$, suppose that
\[f^\text{out}-\sigma f^\text{in}\geq 0\ .\]
Then $M^\text{in}_t\cap B_{r}(x)$ cannot intersect $M^\text{out}_t\cap B_{r}(x)$ for the first time with $\ip{\nu^\text{in}}{\nu^\text{out}}=\sigma$ at an interior point.
\end{lemma}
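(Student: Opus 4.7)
The plan is the standard parabolic avoidance/maximum-principle argument carried out on a local graph parametrisation, using $f^{\text{out}} - \sigma f^{\text{in}} \geq 0$ as a non-negative inhomogeneity. I argue by contradiction. Suppose $t_0 \in (0,T)$ is the first time at which $M^{\text{in}}_t \cap B_r(x)$ and $M^{\text{out}}_t \cap B_r(x)$ touch in the configuration $\ip{\nu^{\text{in}}}{\nu^{\text{out}}} = \sigma$, and that the touching point $p_0$ lies in the open ball $B_r(x)$. The normal alignment $\nu^{\text{in}}(p_0,t_0) = \sigma\,\nu^{\text{out}}(p_0,t_0)$ forces the tangent planes to coincide. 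Pick Euclidean coordinates in which this common tangent plane is horizontal and $\nu^{\text{out}}(p_0,t_0)=e_z$. By smoothness and continuity, there exist $\delta_0, \eta > 0$ with $\overline{B_{\delta_0}(p_0)} \subset B_r(x)$ so that throughout $[t_0-\eta, t_0]$ both $M^{\text{out}}_t$ and $M^{\text{in}}_t$ are graphs $u^{\text{out}}, u^{\text{in}}$ over a common open disc $\Omega$ in the horizontal plane, and the normals remain close to $e_z$ and $\sigma e_z$ respectively. Because $M^{\text{in}}_0$ is locally inside $M^{\text{out}}_0$ in $B_{2r}(x)$ and $t_0$ is the \emph{first} $\sigma$-touching time in $B_r(x)$, after possibly shrinking $\delta_0, \eta$ the function $w := u^{\text{out}} - u^{\text{in}}$ is strictly positive on $\Omega \times [t_0-\eta, t_0)$, vanishes at the projection $y_0$ of $p_0$ at time $t_0$, and has $Dw(y_0, t_0) = 0$.

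Next I would translate FMCF$(f^\ast)$ into graph form. Using the upward graph normal $(-Du, 1)/\sqrt{1+|Du|^2}$ for $u^{\text{out}}$ yields
\begin{equation*}
\frac{\partial_t u^{\text{out}}}{\sqrt{1+|Du^{\text{out}}|^2}} = f^{\text{out}} - H^{\uparrow}(Du^{\text{out}}, D^2 u^{\text{out}}),
\end{equation*}
where $H^{\uparrow}$ denotes the mean curvature of a graph taken with respect to its upward normal. For $u^{\text{in}}$ the FMCF normal $\nu^{\text{in}}$ points upward when $\sigma=1$ and downward when $\sigma=-1$; comparing with the upward graph normal one obtains uniformly
\begin{equation*}
\frac{\partial_t u^{\text{in}}}{\sqrt{1+|Du^{\text{in}}|^2}} = \sigma f^{\text{in}} - H^{\uparrow}(Du^{\text{in}}, D^2 u^{\text{in}}).
\end{equation*}
Subtracting and applying the mean value theorem to the quasilinear operator $(q, A) \mapsto \sqrt{1+|q|^2}\,H^{\uparrow}(q, A)$, together with absorbing the mismatch in the $\sqrt{1+|Du|^2}$ factors (itself linear in $Dw$ by another MVT), the difference $w$ satisfies on $\Omega \times [t_0-\eta, t_0]$ a linear parabolic equation of the form
\begin{equation*}
\partial_t w = a^{ij}(y,t)\,\partial_i\partial_j w + b^i(y,t)\,\partial_i w + c(y,t)\,w + \sqrt{1+|Du^{\text{out}}|^2}\,(f^{\text{out}} - \sigma f^{\text{in}}),
\end{equation*}
with $(a^{ij})$ uniformly positive definite and $b^i, c$ bounded. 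By hypothesis the source term is non-negative. Since $w \geq 0$ on the parabolic cylinder attains an interior zero at $(y_0, t_0)$, the strong parabolic maximum principle forces $w \equiv 0$ on the parabolic component of $(y_0, t_0)$, contradicting the strict positivity of $w$ on $\Omega \times [t_0-\eta, t_0)$.

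The only thing demanding real care is the bookkeeping for the two cases $\sigma = \pm 1$: for $\sigma = -1$ the FMCF normal $\nu^{\text{in}}$ points opposite to the upward graph normal of $u^{\text{in}}$, which flips the signs of both $\partial_t u^{\text{in}}$ and $H$ in the FMCF equation and produces precisely the uniform form with $\sigma f^{\text{in}}$ above. This is exactly what makes the hypothesis $f^{\text{out}} - \sigma f^{\text{in}} \geq 0$ yield a non-negative inhomogeneity in both cases; once this sign check is in place the rest of the argument mirrors the classical avoidance principle for mean curvature flow.
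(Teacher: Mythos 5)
Your proposal is correct and follows essentially the same route as the paper: write both surfaces locally as graphs over the common tangent plane at the touching point, derive the graph form of FMCF with the sign factor $\ip{\nu^{\text{in}}}{\mu^{\text{in}}}=\sigma$, subtract to obtain a linear parabolic inequality for $w=u^{\text{out}}-u^{\text{in}}$ with non-negative inhomogeneity $\sqrt{1+|Du^{\text{out}}|^2}\,(f^{\text{out}}-\sigma f^{\text{in}})\geq 0$, and invoke the strong maximum principle. The only cosmetic difference is that you include a zeroth-order term $c\,w$ in the difference equation; since the quasilinear operator depends only on $Du$ and $D^2u$ no such term actually appears (the paper's equation has only $a^{ij}D^2_{ij}w+b^iD_iw$), but its presence is harmless for the maximum-principle conclusion.
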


\begin{proof}
\underline{Step 1}: The graph equation. We describe the evolution of a graphical piece of a surface moving by FMCF($f^\text{gr}$):\\
 We consider a solution $M^\text{gr}_t$ to FMCF($f^\text{gr}$) locally as a graph of the function graph function $u^\text{gr}$ in direction $e_{n+1}$ (possibly after rotation). Then we calculate with respect to unit normal $\mu^\text{gr}$ showing upwards in the graphical setting (the graphical normal $\mu^\text{gr}$ may not be the same normal as the a priori chosen $\nu^\text{gr}$)
 \begin{align*}
 & v^\text{gr}=\sqrt{1+|Du^\text{gr}|^2}, \qquad \mu^\text{gr} = v^{-1}_\text{gr}(-Du^\text{gr} + e_{n+1}),\\
  & g_{ij} = \delta_{ij} + D_i u^\text{gr} D_j u^\text{gr},\qquad g^{ij} = \delta^{ij} -v^{-2}_\text{gr} D^i u^\text{gr} D^j u^\text{gr},
 \end{align*}
and
\[ h_{ij}^\text{gr} =-v^{-1}_\text{gr} D^2_{ij} u^\text{gr}, \qquad H=-v^{-1}_\text{gr}(\delta^{ij} -v^{-2}_\text{gr} D^i u^\text{gr} D^j u^\text{gr})D^2_{ij} u^\text{gr}\]
and so
\[\ip{u^\text{gr}_t e_{n+1} - v^{-1}_\text{gr}(\delta^{ij} -v^{-2} D^i u^\text{gr} D^j u^\text{gr})D^2_{ij} u^\text{gr} \nu^\text{gr}}{\nu^\text{gr}}=f^\text{gr}\ip{\nu^\text{gr}}{\mu^\text{gr}} \]
or
\[u_t^\text{gr} = (\delta^{ij} -v^{-2}_\text{gr} D^i u^\text{gr} D^j u^\text{gr})D^2_{ij} u^\text{gr} + v^\text{gr} f^\text{gr}\ip{\nu^\text{gr}}{\mu^\text{gr}}\ .\]
Here (while hypersurface is graphical), $\ip{\nu^\text{gr}}{\mu^\text{gr}}\in\{1,-1\}$. \\[0.1cm]

\noindent\underline{Step 2}: Proof of the statement.\\
Suppose not. Suppose that the first point of intersection is $(p_0, t_0)$ for some $t_0>0$. Then after rotation we may assume that $\nu^\text{out}(p_0,t_0)=e_{n+1}$ and so $\nu^\text{in}(p_0,t_0)=\pm e_{n+1}$. For some $\e>0$ we can write both $M^\text{out}_t$ and $M^\text{in}_t$ graphically on $B_\e(p_0)\times(t_0-\e,t_0]$ where 
\[u_t^\text{out} = (\delta^{ij} -v^{-2}_\text{out} D^i u^\text{out} D^j u^\text{out})D^2_{ij} u^\text{out} + v^\text{out} f^\text{out}\ ,\]
and
\[u_t^\text{in} = (\delta^{ij} -v^{-2}_\text{in} D^i u^\text{in} D^j u^\text{in})D^2_{ij} u^\text{in} + v^\text{in} f^\text{in}\ip{\nu^\text{in}}{\mu^\text{in}}\ .\]
Due to our definitions we have that $w:=u^\text{out}-u^\text{in}>0$ for $t<t_0$ and 
\begin{flalign*}
w_t&= a^{ij}D^2_{ij} w +\hat{b}^i D_i w +\sqrt{1+|Du^\text{out}|^2}f^\text{out} - \sqrt{1+|Du^\text{in}|^2}\ip{\nu^\text{in}}{\mu^\text{in}}f^\text{in}\\
&= a^{ij}D^2_{ij} w +\hat{b}^i D_i w +\sqrt{1+|Du^\text{out}|^2}(f^\text{out}-\ip{\nu^N}{\mu^N}f) \\
&\qquad+(\sqrt{1+|Du^\text{out}|^2}- \sqrt{1+|Du^\text{in}|^2})\ip{\nu^\text{in}}{\mu^\text{in}}f^\text{in}\\
&= a^{ij}D^2_{ij} w +\hat{b}^i D_i w +\sqrt{1+|Du^\text{out}|^2}(f^\text{out}-\ip{\nu^\text{in}}{\mu^\text{in}}f)\\
&\qquad +\frac{(D^iu^\text{out}+D^iu^\text{in})(D^iu^\text{out}-D^iu^\text{in})}{\sqrt{1+|Du^\text{in}|^2}+ \sqrt{1+|Du^\text{in}|^2}}\ip{\nu^\text{in}}{\mu^\text{in}}f^\text{in}\\
&= a^{ij}D^2_{ij} w +b^i D_i w +\sqrt{1+|Du^\text{out}|^2}(f^\text{out}-\ip{\nu^\text{in}}{\mu^\text{in}}f^\text{in})
\end{flalign*}
where $a^{ij}$ and $b^i$ are the usual terms and we have computed the additional terms to needless accuracy. We know that $\ip{\nu^\text{in}}{\mu^\text{in}}=\ip{\nu^\text{in}(p_0,t_0)}{\nu^\text{out}(p_0,t_0)}=\sigma$. Therefore as $f^\text{out}-\ip{\nu^\text{in}}{\mu^\text{in}}f^\text{in}\geq 0$, we have a contradiction to the strong maximum principle.
\end{proof}

\begin{lemma}\label{lem:OuterLocal}
Suppose that $M_t^1$ satisfies FMCF($f^1$) and $M_t^\text{out}$ satisfies FMCF($f^\text{out}$). Suppose that in a ball $B_r(x)$ we have that $M_0^1$ is locally outside $M_0^\text{in}$. Suppose that for some $\sigma\in\{1,-1\}$
\[f^1-\sigma f^\text{out}\leq 0\]
Then $M^1_t\cap B_{r}(x)$ cannot intersect $M^\text{out}_t\cap B_{r}(x)$ for the first time with $\ip{\nu^1}{\nu^ \text{out}}=\sigma$ at an interior point.
\end{lemma}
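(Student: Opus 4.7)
The plan is to mirror the proof of Lemma~\ref{lem:InnerOuterLocal} with the roles of ``inner'' and ``outer'' exchanged. I argue by contradiction: suppose $(p_0,t_0)\in B_r(x)\times(0,T)$ is a first interior contact point of $M^1_t$ and $M^\text{out}_t$ with $\ip{\nu^1(p_0,t_0)}{\nu^\text{out}(p_0,t_0)}=\sigma$. After a rotation of $\R^{n+1}$ one may take $\nu^\text{out}(p_0,t_0)=e_{n+1}$, so that $\nu^1(p_0,t_0)=\sigma e_{n+1}$. Since $M^1$ lies in the local exterior of $M^\text{out}$ (the component into which $\nu^\text{out}$ points), on a small parabolic neighbourhood of $(p_0,t_0)$ both surfaces may be written as graphs $x_{n+1}=u^1(x,t)$ and $x_{n+1}=u^\text{out}(x,t)$ over $B_\varepsilon(p_0)\subset\R^n$, with $u^1\geq u^\text{out}$ and equality precisely at $(p_0,t_0)$.

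Set $w:=u^1-u^\text{out}$. Then $w\geq 0$, $w>0$ for $t<t_0$, and $(p_0,t_0)$ is an interior minimum of $w$. The graph equation derived in Step~1 of the proof of Lemma~\ref{lem:InnerOuterLocal}, applied to each surface, gives
\[
u^{\ast}_t = \bigl(\delta^{ij}-v_{\ast}^{-2}D^iu^{\ast}D^ju^{\ast}\bigr)D^2_{ij}u^{\ast} + v^{\ast}\ip{\nu^{\ast}}{\mu^{\ast}}f^{\ast},
\]
where $\mu^{\ast}$ denotes the upward graph normal; at $(p_0,t_0)$ one has $\ip{\nu^\text{out}}{\mu^\text{out}}=1$ and $\ip{\nu^1}{\mu^1}=\sigma$. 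Subtracting the two equations and linearising the quasilinear elliptic operator in the standard way yields
\[
w_t = a^{ij}D^2_{ij}w + b^iD_iw + \sigma v^1 f^1 - v^\text{out}f^\text{out},
\]
with $a^{ij}$ uniformly positive definite and $b^i$ bounded. Using $\sigma^2=1$ the source factors as
\[
\sigma v^1 f^1 - v^\text{out}f^\text{out} = \sigma v^1\bigl(f^1 - \sigma f^\text{out}\bigr) + f^\text{out}(v^1-v^\text{out}),
\]
and the last summand, being linear in $Dw$ to leading order, is absorbed into the first-order term $b^iD_iw$.

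Combined with the hypothesis $f^1-\sigma f^\text{out}\leq 0$, the factorisation places $w$ in the scope of the strong parabolic maximum principle at an interior extremum with bounded coefficients, which then forces $w\equiv 0$ on the backward parabolic hull, contradicting $w>0$ for $t<t_0$. The main obstacle---exactly as in the proof of Lemma~\ref{lem:InnerOuterLocal}---is the sign bookkeeping through $\sigma$: one has to verify the cases $\sigma=\pm1$ separately to check that the sign of the factored source is consistent with the direction of the parabolic inequality needed to invoke the strong maximum principle. Once that is in hand the proof is essentially identical to the previous one, with the inequality on the forcing terms reversed to account for the swapped inside/outside configuration.
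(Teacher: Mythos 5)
Your proof follows the paper's general strategy (local graphical reduction, subtraction, strong maximum principle), but the sign bookkeeping you wave at does not in fact close, and the direction you take the graph over betrays a misreading of the (admittedly garbled) hypothesis. You set $e_{n+1}=\nu^\text{out}(p_0,t_0)$ and read the inclusion as ``$M^1$ is in the local exterior of $M^\text{out}$,'' which gives $w:=u^1-u^\text{out}\geq 0$ with an interior \emph{minimum} at $(p_0,t_0)$ and, after your (correct) factorisation, the source term $\sigma\, v^1\bigl(f^1-\sigma f^\text{out}\bigr)$. For $\sigma=-1$ this source is $\geq 0$, so $w$ is a supersolution and the strong minimum principle yields the contradiction; but for $\sigma=+1$ the source is $\leq 0$, so $w$ is a \emph{sub}solution with an interior minimum, and no version of the strong maximum principle rules that out. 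Indeed, under your reading the statement is false for $\sigma=+1$: take $M^1$ a stationary plane ($f^1=0$) sitting outside an expanding sphere $M^\text{out}$ with outward normal and $f^\text{out}=1$; then $f^1-\sigma f^\text{out}=-1\leq 0$, yet the sphere hits the plane with $\nu^1=\nu^\text{out}$.

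The paper's proof takes $e_{n+1}=\nu^1$ and asserts $w=u^1-u^\text{out}<0$ before the touching time; that sign is only consistent if $M^\text{out}$ lies in the local exterior of $M^1$ (the ``$M^\text{in}$'' in the lemma statement is a typo, and your reading of which surface is outside which is the opposite of the one the proof uses). With that reading one gets $w_t=a^{ij}D^2_{ij}w+b^iD_iw+v^1\bigl(f^1-\sigma f^\text{out}\bigr)$ with source $\leq 0$ and $w\leq 0$ achieving an interior maximum, so the strong maximum principle for subsolutions applies uniformly in $\sigma$. This is also what makes the Inner Barriers proposition work: there $M^1=N_t$ is the barrier and $M^\text{out}=M_t$ the VPMCF, which lies in the local exterior of $N_t$. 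So the gap in your write-up is precisely the unverified $\sigma=+1$ case; it is not mere bookkeeping, and fixing it requires swapping the inclusion relation to match what the PDE argument actually supports.
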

\begin{proof}
Suppose not. This time, we may take $e_{n+1} = \nu^1(p_0,t_0)=\sigma\nu^\text{out}(p_0,t_0)$ (as otherwise one would have been locally inside the other). This time $w=u^1-u^\text{out}<0$ and 
\begin{flalign*}
w_t
&= a^{ij}D^2_{ij} w +b^i D_i w +\sqrt{1+|Du^1|^2}(f^1-\sigma f^\text{out})\ .
\end{flalign*}
Again we get a contradiction to the strong maximum principle.
\end{proof}

\begin{proposition}[Outer Barriers]
Suppose that $M^\text{out}_t$ is a solution for FMCF($f^\text{out}$) which is embedded for all time and bounds the compact region $\Lambda(t)$, and $\nu^N$ points out of $\Lambda(t)$. Suppose that $M_t$ satisfies VPMCF and $M_0\subset \overset{\circ}{\Lambda(0)}$ and for all $t\in[0,T)$
\[f^\text{out}\geq |\ov{H}| .\]
Then for all $t\in [0,T)$, $M_t \subset \Lambda(t)$. 
\end{proposition}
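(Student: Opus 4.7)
The plan is to view VPMCF as FMCF with forcing $f=\ov{H}$ and then invoke Lemma~\ref{lem:InnerOuterLocal} as a barrier principle. Indeed, from $\partial_t X = -(H-\ov{H})\nu$ we obtain $\langle \partial_t X + H\nu, \nu\rangle = \ov{H}$, so $M_t$ satisfies FMCF($\ov{H}$). The hypothesis $f^\text{out}\geq |\ov{H}|$ then gives both $f^\text{out}-\ov{H}\geq 0$ and $f^\text{out}+\ov{H}\geq 0$, i.e.\ $f^\text{out}-\sigma\, \ov{H}\geq 0$ for each $\sigma\in\{1,-1\}$, which is exactly the comparison hypothesis of Lemma~\ref{lem:InnerOuterLocal} with ``inner'' flow $M_t$ and ``outer'' flow $M^\text{out}_t$.

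Formally, set
\[T^\ast := \sup\bigl\{t\in[0,T) \,:\, M_s \subset \overset{\circ}{\Lambda(s)} \text{ for all } s\in[0,t]\bigr\}.\]
Since $M_0\subset\overset{\circ}{\Lambda(0)}$, $M_0$ is compact and the flows depend smoothly on $t$, we have $T^\ast>0$. Assume for contradiction that $T^\ast<T$. By continuity of the flow and compactness, there exists a point $p_0\in M^n$ with $X(p_0,T^\ast)\in M^\text{out}_{T^\ast}$, while $M_s\subset \overset{\circ}{\Lambda(s)}$ for every $s<T^\ast$. In particular, near $p_0$ the surface $M_{T^\ast}$ is locally inside $M^\text{out}_{T^\ast}$ in the sense of the barrier definition, so the two hypersurfaces are tangent at $p_0$ and the normals satisfy $\nu(p_0,T^\ast)=\sigma\, \nu^N(p_0,T^\ast)$ for some $\sigma\in\{1,-1\}$.

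Choose a ball $B_r(X(p_0,T^\ast))$ small enough that on some short time interval $(T^\ast-\varepsilon, T^\ast]$ both surfaces may be written as graphs over the common tangent hyperplane at $p_0$, and such that $M_s\cap B_r$ is locally inside $M^\text{out}_s\cap B_r$ for $s\in(T^\ast-\varepsilon, T^\ast)$. Since $f^\text{out}-\sigma\, \ov{H}\geq 0$ for the chosen sign $\sigma$, Lemma~\ref{lem:InnerOuterLocal} rules out a first interior touching of these two surfaces with $\langle \nu,\nu^N\rangle=\sigma$ at $(p_0,T^\ast)$. This contradicts the definition of $T^\ast$, so $T^\ast=T$ and $M_t\subset \Lambda(t)$ for all $t\in[0,T)$.

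The only subtle point is the normal orientation at the first contact: because VPMCF does not come with a preferred choice of unit normal relative to the barrier, one must verify the hypothesis of Lemma~\ref{lem:InnerOuterLocal} for \emph{both} possible values of $\sigma$, which is precisely why the bound is stated in absolute value as $f^\text{out}\geq|\ov{H}|$ rather than $f^\text{out}\geq \ov{H}$. Once this is observed, the remainder of the proof is a standard first-touching time argument combined with the graphical strong maximum principle already encoded in Lemma~\ref{lem:InnerOuterLocal}.
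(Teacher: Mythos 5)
Your proof is correct and follows essentially the same route as the paper: recognize VPMCF as FMCF($\ov{H}$), note that $f^\text{out}\geq|\ov{H}|$ gives $f^\text{out}-\sigma\ov{H}\geq 0$ for both $\sigma\in\{1,-1\}$, and then rule out a first interior touching by Lemma~\ref{lem:InnerOuterLocal}. The paper's version is terser (it leaves the first-touching-time argument implicit), but your expansion of that argument via $T^\ast$ is exactly what is meant.
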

\begin{proof}
Suppose not. We have that for $\sigma\in\{1,-1\}$, 
\[f^\text{out} - \sigma \ov{H}\geq f^\text{out}-|\ov{H}|\geq 0\]
and so by Lemma~\ref{lem:InnerOuterLocal}, at a first time of intersection the two normals cannot be multiples of each other, a contradiction.
\end{proof}

\begin{proposition}[Inner Barriers]
Suppose that $N_t$ is a solution for FMCF($f$) which is  embedded for all time and bounds the compact region $\Lambda(t)$, and $\nu^N$ points out of $\Lambda(t)$. Suppose that $M_0\cap \overline{\Lambda(0)}=\emptyset$, and for all $t\in[0,T)$
\[f\leq -|\ov{H}| .\]
Then for all $t\in [0,T)$, $M_t\cap \overline{\Lambda(0)}=\emptyset$. 
\end{proposition}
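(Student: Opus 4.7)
The strategy is analogous to the Outer Barriers proof, but with Lemma~\ref{lem:OuterLocal} taking the place of Lemma~\ref{lem:InnerOuterLocal} (since now $M_t$ lies locally outside $N_t$ rather than inside). Assume for contradiction that the conclusion fails and let
\[
t_0 := \inf\bigl\{t \in [0,T) : M_t \cap \overline{\Lambda(t)} \neq \emptyset\bigr\} \in (0,T),
\]
and pick a contact point $p_0 \in M_{t_0}\cap N_{t_0}$, which exists by minimality of $t_0$ together with $M_0 \cap \overline{\Lambda(0)} = \emptyset$. By the definition of $t_0$, for $t<t_0$ the flow $M_t$ is disjoint from $\overline{\Lambda(t)}$, so for some small $r>0$ the hypersurface $M_t$ is locally outside $N_t$ in the ball $B_r(p_0)$ for all $t \in [0,t_0)$.

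Since VPMCF is independent of the choice of unit normal on $M$ (the equation $\partial_t X = -(H-\ov H)\nu$ is invariant under $\nu \mapsto -\nu$), we are free to replace $\nu^M$ by $-\nu^M$ if necessary; this flips the sign of $\ov H$ but preserves the hypothesis $f \le -|\ov H|$, which depends only on $|\ov H|$. Thus we may arrange that $\nu^M(p_0,t_0) = -\nu^N(p_0,t_0)$, so that $\sigma := \langle \nu^M,\nu^N\rangle(p_0,t_0) = -1$. Now apply Lemma~\ref{lem:OuterLocal} with $M^1 = M_t$, $M^{\text{out}} = N_t$, $f^1 = \ov H$, $f^{\text{out}} = f$, and $\sigma = -1$. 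The inequality that the lemma requires reduces to
\[
f^1 - \sigma f^{\text{out}} = \ov H + f \;\le\; \ov H - |\ov H| \;\le\; 0,
\]
which holds throughout $[0,t_0]$ by hypothesis. Lemma~\ref{lem:OuterLocal} therefore rules out a first interior intersection of $M_t$ and $N_t$ in $B_r(p_0)$ with $\sigma = -1$, contradicting the construction of $(p_0,t_0)$. Hence $M_t \cap \overline{\Lambda(t)} = \emptyset$ for every $t\in[0,T)$.

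The main obstacle is the asymmetry between the hypothesis $f \le -|\ov H|$ and the inequality form required by Lemma~\ref{lem:OuterLocal}: unlike the Outer Barriers setting, where the symmetric bound $f^{\text{out}} \ge |\ov H|$ dominates $\sigma\ov H$ for both signs of $\sigma$ at once, here the hypothesis directly yields $\ov H - \sigma f \le 0$ only for $\sigma = -1$. One must therefore exploit the orientation freedom of the VPMCF (which also preserves $f \le -|\ov H|$) to ensure that the applicable sign of $\sigma$ is precisely the one occurring at the hypothetical first contact; once this matching is achieved, the lemma closes the argument.
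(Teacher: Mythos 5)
Your proof is correct, but you feed the surfaces into Lemma~\ref{lem:OuterLocal} in the opposite order to the paper, which is why you need the normal flip. You take $M^1 = M_t$ (the VPMCF, $f^1 = \ov{H}$) and $M^\text{out} = N_t$ (the barrier, $f^\text{out} = f$), so the lemma's inequality $f^1 - \sigma f^\text{out} = \ov{H} - \sigma f \leq 0$ only follows from $f \leq -|\ov{H}|$ when $\sigma = -1$, and you then flip $\nu^M$ (legitimately, by the orientation-invariance of VPMCF) to force $\sigma = -1$ at the hypothetical first contact. The paper instead takes $M^1 = N_t$ ($f^1 = f$) and $M^\text{out} = M_t$ ($f^\text{out} = \ov{H}$): then the required inequality is $f - \sigma\ov{H} \leq f + |\ov{H}| \leq 0$ for \emph{both} values $\sigma = \pm 1$ simultaneously, since $\sigma\ov{H} \geq -|\ov{H}|$ regardless of $\sigma$; no normal flip is needed, and the proof is structurally parallel to the Outer Barriers proof, where the barrier again occupies the $\sigma$-free slot and $|\ov{H}|$ absorbs $\pm\ov{H}$. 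So the asymmetry you flag in your final paragraph is an artifact of your role assignment, not of the Inner Barriers situation itself; the paper's swap removes it, and also has the minor advantage that the ``locally outside'' hypothesis of the lemma then refers only to the known orientation $\nu^N$, whereas in your ordering it involves $\nu^M$ (your flip happens to repair both issues at once, so your argument still closes).
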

\begin{proof}
Suppose not. For any $\sigma\in\{1,-1\}$
\[f^1-\sigma \ov{H}\leq f^1+ |\ov{H}|\leq 0\]
and so by Lemma \ref{lem:OuterLocal}, at a first time of intersection the two normals cannot be multiples of each other, a contradiction.
\end{proof}

We need the following corollary from our previous paper:

\begin{cor}[{\cite[Corollary 5]{OurPreviousPaper}}]\label{cor:AlexLoss}
A compact flowing manifold with bounded curvature may only loose the property of being Alexandrov immersed at time $T$ if there exist points $p, q(t)\in M^n$ so that $|X(p,t)-X(q(t),t)|$ goes to zero with $\ip{\nu(p,t)}{\nu(q(t),t)}=-1$ and where $\ip{\nu(p)}{X(q(t),t)-X(p,t)}<0$ and $\ip{\nu(q,t)}{X(p,t)-X(q(t),t)}<0$ for $T-\delta<t<T$.
\end{cor}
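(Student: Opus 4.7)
The plan is to track the evolution of the Alexandrov extension $G_t:\overline{\Omega}\to\bb{R}^{n+1}$ and to classify the ways its immersion property can fail at a finite time $T$, given the bounded curvature hypothesis. First I would construct a smooth one-parameter family $G_t$ with $G_t|_{\partial\overline{\Omega}}=X(\cdot,t)$ by extending the normal velocity $-(H-\ov{H})\nu$ of the boundary flow into a collar neighbourhood of $\partial\overline{\Omega}$ inside $\overline{\Omega}$ via a smooth cutoff, and completing it arbitrarily (e.g.\ by zero) in the bulk. The resulting flow of $G_t$ is as smooth as the boundary data allows, so by the bounded curvature assumption and standard Shi-type estimates, $G_t$ is locally uniformly smooth up to $T$ on compact pieces.

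Next I would argue that, under this construction, the differential $DG_t$ cannot degenerate in the interior of $\overline{\Omega}$ while the boundary immersion $X(\cdot,t)$ stays smooth with bounded $|A|$. In a tubular neighbourhood of $\partial\overline{\Omega}$ parametrised by distance $s$ and a point $y\in\partial\overline{\Omega}$, one has $DG_t \approx \mathrm{id}-s\,\II(y,t)+O(s^2)$, so if $|\II|\leq C$ then $DG_t$ is nondegenerate on a collar of uniform width $s<1/(2C)$; away from the boundary we pick the interior extension to be a diffeomorphism from the outset, so only the collar is a potential source of degeneration, and it remains nondegenerate for as long as $|A|$ is bounded. Hence the only possible way Alexandrov immersion is lost at $T$ is via the boundary meeting itself: there exist $p$ and $q(t)$ in $M^n$ with $|X(p,t)-X(q(t),t)|\to 0$ as $t\nearrow T$.

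At such a first contact time the two boundary sheets are tangent, so $\nu(p,T)=\sigma\,\nu(q,T)$ for $\sigma\in\{1,-1\}$. I would rule out $\sigma=+1$ by the following local argument: in a small ball around the contact point, write both sheets as graphs over the common tangent plane. If the graphical normals agree at the contact, then $G_t$ is locally injective and orientation-preserving on two sheets agreeing at a point with the same normal, forcing the two pieces of $\partial\overline{\Omega}$ to coincide locally rather than merely to touch — this contradicts the assumption that $p\neq q(t)$ for $t<T$ coming from two distinct preimages under $G_t$. Therefore $\ip{\nu(p,T)}{\nu(q(T),T)}=-1$.

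Finally, I would extract the sign conditions on $\ip{\nu(p,t)}{X(q(t),t)-X(p,t)}$ for $t<T$. With anti-parallel normals at the contact point, in graphical coordinates near $p$ with $\nu^\text{out}=e_{n+1}$ the sheet at $q$ lies locally above the sheet at $p$. The requirement that $G_t$ remain an immersion up to time $T$ forces the local interior of $M_t$ at $p$ (i.e.\ the side opposite $\nu(p,t)$) to contain points of the sheet through $q(t)$, which is exactly $\ip{\nu(p,t)}{X(q(t),t)-X(p,t)}<0$; the symmetric statement for $q$ follows identically. The opposite sign case corresponds to the two sheets meeting from outside, creating a cusp of $\overline{\Omega}$ at $T$ but not actually destroying the immersion property of $G$, so it need not be included.

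The principal obstacle is Step 2: namely, producing an extension $G_t$ whose interior cannot degenerate merely from bounded boundary curvature. The cleanest way to handle it — and presumably the one adopted in the previous paper — is to work only locally near the touching point using the graphical representation and a strong maximum principle comparison in the spirit of Lemmas~\ref{lem:InnerOuterLocal} and \ref{lem:OuterLocal}, thereby avoiding any need for a global choice of $G_t$.
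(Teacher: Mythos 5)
This corollary is not proved in the present paper: it is imported verbatim as \cite[Corollary 5]{OurPreviousPaper}, so there is no ``paper's own proof'' to compare your attempt against. Evaluating the proposal on its own merits, the high-level plan (track an Alexandrov extension $G_t$, classify the ways $DG_t$ can degenerate, reduce to a specific boundary self-contact configuration) is the natural one, but several steps as written do not hold up.

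First, your Step~3 tries to rule out $\sigma=+1$ contacts entirely by arguing that two sheets tangent at a point with the same normal ``must coincide locally''. This is false: the graphs $u^p\equiv 0$ and $u^q=|x|^4$ agree to second order at the origin with identical normal and never coincide. More importantly, the corollary does not claim that $\sigma=+1$ contacts cannot occur; it claims they cannot be responsible for the \emph{loss} of the Alexandrov property. What must be shown is that a $\sigma=+1$ first contact leaves $G_t$ extendable as an immersion (the two local interiors lie on the same side, so the image merely acquires a double-covered region, and no degeneration of $DG_t$ is forced). Your argument is aimed at the wrong statement.

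Second, your Step~4 is internally inconsistent. You write that near $p$, with $\nu^{\text{out}}=e_{n+1}$, ``the sheet at $q$ lies locally above the sheet at $p$''. That would give $\ip{\nu(p,t)}{X(q(t),t)-X(p,t)}>0$, which is the \emph{opposite} of the conclusion you then state. The corollary's sign conditions mean that, in your local coordinates, the $q$-sheet lies \emph{below} $p$ (on the side the outer normal $\nu(p)$ points away from), so that the outward normals on the two sheets point away from each other and the bulk of $\ov\Omega$ is caught between them; that is the pinching configuration that actually forces $DG_t$ to degenerate. Your conclusion is the right one, but the intermediate claim contradicts it and should be corrected.

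Finally, Step~2 (the collar argument that $DG_t\approx\mathrm{id}-s\,\II+O(s^2)$ remains nondegenerate on a collar of width $\lesssim |A|^{-1}$) is only a heuristic for a specific, ad hoc construction of $G_t$, and you yourself flag this as the ``principal obstacle''. The genuine content of the corollary is a continuation statement — that a valid $G_t$ can be kept so long as the prohibited boundary self-contact does not occur — and that requires actually producing and controlling the family $G_t$ near a potential degeneration, not merely exhibiting one extension near time $0$. As written, this gap is not closed.
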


\begin{theorem}
 Let $M_0$ be Alexandrov immersed $M_t$ the VPMCF starting from $M_0$. As long as $\ov{H} \geq 0$, the flow stays Alexandrov immersed.
\end{theorem}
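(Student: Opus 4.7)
The plan is to combine Corollary~\ref{cor:AlexLoss} with the barrier Lemma~\ref{lem:InnerOuterLocal}, applied not to two separate flows but to two local sheets of the VPMCF itself. The key observation is that VPMCF can be rewritten as $\ip{\partial_t X + H\nu}{\nu} = \ov{H}$, so \emph{every} local piece of the flow is automatically a solution of FMCF($\ov{H}$). Under this lens, the hypothesis $\ov{H}\geq 0$ becomes exactly the sign needed to run the barrier argument with $\sigma=-1$ against oneself.

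I would argue by contradiction. Suppose $T>0$ is the first time at which Alexandrov immersion is lost; since the flow is smooth with bounded curvature on $[0,T)$, Corollary~\ref{cor:AlexLoss} supplies a point $p\in M^n$ and a family $q(t)\in M^n$, defined for $t\in (T-\delta,T)$, along which
\[
|X(p,t)-X(q(t),t)|\to 0,\qquad \ip{\nu(p,t)}{\nu(q(t),t)}\to -1,
\]
and with both $\ip{\nu(p,t)}{X(q(t),t)-X(p,t)}$ and $\ip{\nu(q(t),t)}{X(p,t)-X(q(t),t)}$ strictly negative on $(T-\delta,T)$.

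Next I would localise. Let $p_0\in\R^{n+1}$ be the common limit of $X(p,t)$ and $X(q(t),t)$ as $t\nearrow T$, and choose small geodesic neighbourhoods $U_p, U_q\subset M^n$ of $p$ and $q(T^-)$ so that $M^1_t:=X(U_p,t)$ and $M^2_t:=X(U_q,t)$ are disjoint embedded discs inside some ball $B_r(p_0)$ for $t\in(T-\delta',T)$. Each of $M^1_t$ and $M^2_t$ satisfies FMCF($\ov{H}$). Using the strict sign conditions from Corollary~\ref{cor:AlexLoss} together with the near anti-parallelism of $\nu(p,t)$ and $\nu(q(t),t)$, I would verify that throughout $(T-\delta',T)$ the sheet $M^2_t$ lies in the local interior of $M^1_t$ in the sense of Lemma~\ref{lem:InnerOuterLocal}, and that the collision at $t=T$ happens with $\ip{\nu^1}{\nu^2}=-1$.

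Finally I would apply Lemma~\ref{lem:InnerOuterLocal} with $M^\mathrm{out}=M^1$, $M^\mathrm{in}=M^2$, $f^\mathrm{out}=f^\mathrm{in}=\ov{H}$, and $\sigma=-1$. The required inequality becomes
\[
f^\mathrm{out}-\sigma f^\mathrm{in}=\ov{H}+\ov{H}=2\ov{H}\geq 0,
\]
so the lemma forbids precisely the interior first-intersection with anti-parallel normals that Corollary~\ref{cor:AlexLoss} has just produced, yielding the contradiction. The main obstacle I expect is the localisation step: one must check that the pointwise inequalities of Corollary~\ref{cor:AlexLoss} genuinely upgrade, via continuity and the smoothness of $X$ and $\nu$ on $[0,T)$, to the open-ball ``locally inside'' hypothesis of Lemma~\ref{lem:InnerOuterLocal}. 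This should reduce to writing both sheets graphically over a common hyperplane through $p_0$ with slopes tending to zero, but it is the one place where the argument is more than a direct invocation of previous results.
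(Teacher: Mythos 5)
Your proposal follows essentially the same route as the paper: invoke Corollary~\ref{cor:AlexLoss} to pin down the geometry of any hypothetical loss of Alexandrov immersion, note that each local sheet of VPMCF is a solution of FMCF($\ov{H}$), and apply Lemma~\ref{lem:InnerOuterLocal} with $\sigma=-1$ and $f^{\mathrm{out}}=f^{\mathrm{in}}=\ov{H}$, so that $\ov{H}\geq 0$ rules out the forbidden interior first-intersection with anti-parallel normals. Your proposal is correct; it merely spells out the localisation step that the paper's proof leaves implicit.
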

\begin{proof}
From Corollary \ref{cor:AlexLoss} we know that at a time where Alexandrov immersion property is lost two disjoint pieces of the flow must intersect, with the normals are in opposite directions. In Lemma \ref{lem:InnerOuterLocal} this corresponds to $\sigma=-1$ while $f^\text{out}=f^{\text{in}}=\ov{H}$, and so we immediately see that this cannot happen.
\end{proof}

A natural question to ask is when $\ov{H}\geq 0$ is preserved if we know that $M_t$ is Alexandrov immersed. This is not always true without further assumptions - see the Example  \ref{ex:trilobite} below.

\appendix
\section{The Trilobite: An example in which $\overline{H}\geq 0$ is lost}\label{appendix1}

A natural hope, given the previous section, is to attempt to show that under Alexandrov immersed VPMCF, $\ov{H}(t)\geq 0$ is preserved. The following example demonstrates that this is not the case without further assumptions on the flow.

The evolution equation for $\ov{H}$ is given by
\[\ddt{\ov{H}} = \fint (H-\ov{H})|A|^2 + (H-\ov{H})(-H^2 +\ov{H}H)d\mu = \fint (H-\ov{H})|A|^2 - (H-\ov{H})^2Hd\mu.\]
So, for surfaces ($n=2$), if $\ov{H}(0)=0$, then 
\begin{align*}
 \ddt{\ov{H}}\bigg|_{t=0} = \fint H (|A|^2 - H^2)d \mu \big|_{t=0} = -2 \fint H K d\mu\big|_{t=0}
\end{align*}
using the Gauss equations. We now describe initial data such that $\int_M H d\mu=0$ and $\int_M KH d\mu >0$, meaning that $\ov{H}\geq 0$ is not preserved.  

Our example is constructed from rotationally symmetric pieces. If $\gamma$ is the profile curve (parametrized by arclength) of a rotationally symmetric surface (rotated about the $y$-axis) then we have that the curvatures are given by
\[\kappa_1 = \kappa, \qquad \kappa_2 =\frac{\ip{\dot{\gamma}}{e_y}}{x}=\frac{\dot{y}}{x}, \qquad H=\frac{\dot{y}}{x}+\kappa, \qquad K=\frac{\dot{y}\kappa}{x}\]
and we may see that $d\mu = 2\pi x ds$.\\
Therefore we have that (for $I=[a,b]$)
\begin{align}\int_M H d\mu  = 2\pi \int_\gamma x\kappa ds +2\pi ( y(b)-y(a)) &&\int_M HK d\mu = 2\pi\int_\gamma \dot{y} \kappa^2 + \kappa \frac{\dot{y}^2}{x} ds \ .\label{eq:HintrsHKintrs}
\end{align}
We calculate the above for various profiles.\\
\noindent\textbf{On a hemisphere $E_\rho$ of radius $\rho$:} We have that $\kappa_1=\kappa_2=\rho^{-1}$ and so in this case $H=2\rho^{-1}$, $K = \rho^{-2}$, 
\[\int_{E_\rho} H = 4\pi \rho, \qquad \int_{E_\rho} HK d\mu = 4\pi \rho^{-1}\ .\]

\noindent\textbf{On a cylinder $C_{\rho, l}$ of radius $\rho$ and length $l$:} We have that $\kappa_1=0$, $\kappa_2=\rho^{-1}$  and so in this case $H=\rho^{-1}$, $K = 0$, 
\[\int_{C_{\rho, l}} H = 2\pi l, \qquad \int_{C_{\rho, l}} HK d\mu = 0 \ .\]

\noindent\textbf{On a rotated arc $I^{\theta_1, \theta_2}_{(\tilde x,\tilde y), \rho}\subset \bb{R}_+$ centred at $(\tilde x,\tilde y)$ through angles $[\theta_1, \theta_2]$ from the horizontal:} Parametrising as $\gamma(s) =(\rho\cos(\rho^{-1} s)+\tilde x,\rho\sin(\rho^{-1} s)+\tilde y)$  for $s\in[\rho \theta_1,\rho\theta_2]$, we have that $\kappa=\rho^{-1}$ so 
\begin{flalign*}\int_{I^{\theta_1, \theta_2}_{(\tilde x,\tilde y), \rho}} H &= 2\pi\rho^{-1} \int_{\rho\theta_1}^{\rho\theta_2} \rho\cos(\rho^{-1} s)+\tilde x ds+2\pi\rho(\sin(\theta_2)-\sin(\theta_1))\\
&=4\pi\rho[\sin(\t_2)-\sin(\t_1)]+2\pi \tilde x(\t_2-\t_1)
\end{flalign*}
while
\begin{flalign*}
\int_{I^{\theta_1, \theta_2}_{(\tilde x,\tilde y), \rho}} HK d\mu &= 2\pi \rho^{-1}\int_{\rho\theta_1}^{\rho\theta_2} \cos(\rho^{-1} s)\rho^{-1}+\frac{\cos^2(\rho^{-1} s)}{\rho\cos(\rho^{-1} s)+\tilde x} ds\\
&= 2\pi \int_{\theta_1}^{\theta_2}\frac{\cos^2(t)}{\rho\cos(t)+\tilde x} dt+2\pi\rho^{-1}[\sin(\t_2)-\sin(\t_1)]\ .
\end{flalign*}
As $\int_{\theta_1}^{\theta_2} \cos^2(t)dt=\frac{1}{4}[\sin(2\t_2)-\sin(2\t_1)] +\frac 1 2[\t_2-\t_1]$ we may estimate the integral as
\[\pi\frac{\frac{1}{2}[\sin(2\t_2)-\sin(2\t_1)] +[\t_2-\t_1]}{\tilde x+\rho}\leq 2\pi \int_{\theta_1}^{\theta_2}\frac{\cos^2(t)}{\rho\cos(t)+\tilde x} dt \leq \pi\frac{\frac{1}{2}[\sin(2\t_2)-\sin(2\t_1)] +[\t_2-\t_1]}{\tilde x-\rho}\]

 \begin{figure}
 \includegraphics[scale=0.16]{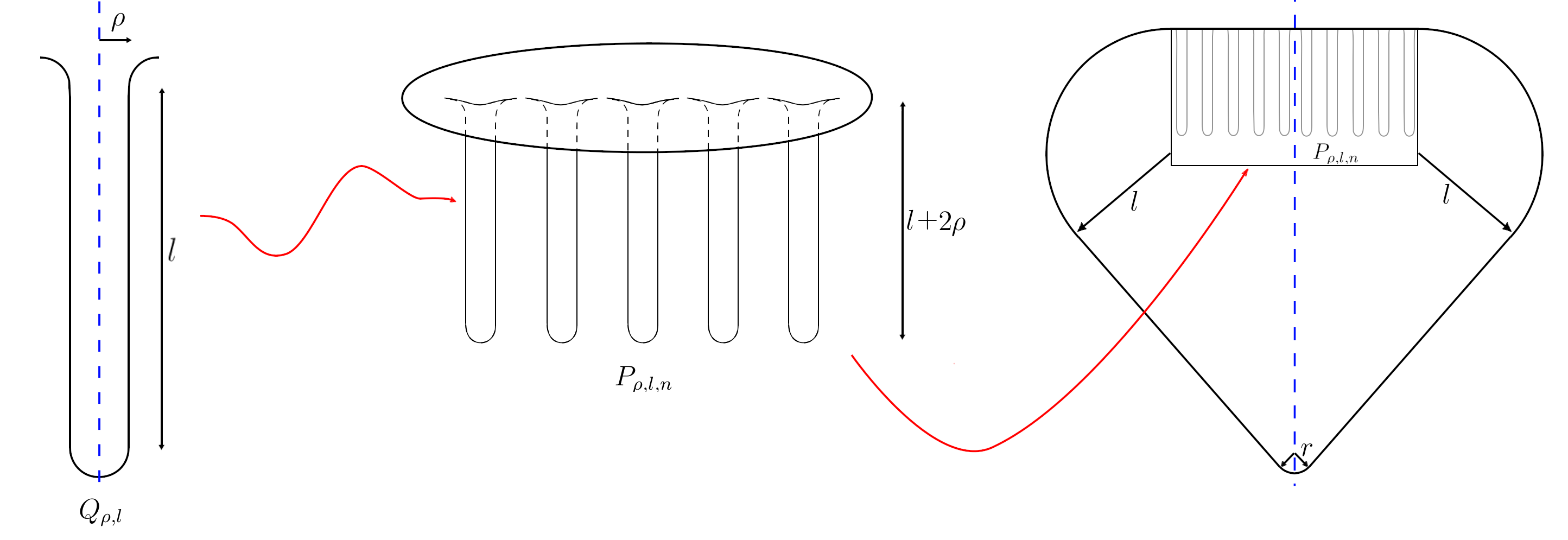}
 \caption{The construction of the Trilobite example in pictorial form}
 \end{figure}
\begin{example}[The Trilobite]\label{ex:trilobite}
We construct a family of embedded surfaces in $\bb{R}^3$ as follows. We start with a construction that is $C^1$ everywhere and smooth away from 1 dimensional glueing points. 
\begin{description}
\item[\textbf{Step 1 - Capped cylinders, $Q_{\rho,l}$}] We start with a hemisphere of radius $\rho$, $E_\rho$, attach this to a long cylinder $C_{\rho, l}$ and then attach this to the plane using a quarter circle $I_{(2\rho,-\rho),\rho}^{(\frac{\pi}{2}, \pi)}$ of radius $\rho$, as shown. We will call this entire union $Q_{\rho,l}$. In our constuction, we will choose out normals to point into the cylinders meaning that they contribute negatively to both integrals. We calculate
\begin{center}
\begin{tabular}{|| c || c | c |}
\hline\hline
&$\int Hd\mu$&$\int HK d\mu$\\
\hline\hline
$E_\rho^1$ &$-4\pi \rho$ & $-4\pi \rho^{-1}$\\
$C_{\rho, l}$& $-2\pi l$&$0$\\
$I_{(2\rho,-\rho),\rho}^{(\frac{\pi}{2}, \pi)}$&$2\pi\rho(\pi-2)$&$\geq\frac{\pi^2}{6\rho} -2\pi\rho^{-1}$\\
\hline\hline
\end{tabular}
\end{center}
 We get that
 \[\int_{Q_{\rho,l}} H d\mu = 2\pi[-(4-\pi)\rho-l], \qquad \int_{Q_{\rho,l}} KH d\mu \geq -6\pi\rho^{-1} \]
\item[\textbf{Step 2 - Attach $n$ capped cylinders to a disk}]  We now attach $n$ of these to a flat disk of radius $2n \rho$ which we may always do (e.g.\ in a line). We will refer to this modified disk as $P_{\rho,l,n}$ where we note that the integrals over $H$ and $HK$ on this disk are just $n$ times those on $Q_{\rho,l}$. 
\item[\textbf{Step 3 - Glue into a final rotationally symmetric surface}]We now attach this to the rotationally symmetric surface given by taking another third of a circle $I^{-\frac{\pi}{6}, \frac{\pi}{2}}_{(2n\rho,-l),l}$ before closing the surface with a cone of slope $\sqrt{3}$ which we will denote $O_{\sqrt{3}}$. We round off the point of the cone using a spherical cap of radius $r$ denoted $A_r$.
\item[\textbf{Step 4 - A careful choice of constants and smoothing}] In the above, altering $l$ in the cylinders $Q_{\rho, l}$ changes only $\int H$, and by increasing $n$ sufficiently, we will see below see that this may be used to ensure that in the above we have $\int H=0$. Unfortunately, the caps and joins adds a significant negative quantity to $\int HK$. However $A_r$ has only negligable $\int H$ contribution while adding an arbitrarily large amount to $\int HK$. Balancing these (see calculations below) mean we can ensure that $\int H=0$ while $\int HK$ can be made arbitrarily large. However, we would like a smooth manifold - to do this we smooth only locally to the joins, perturbing $\int H$, $\int HK$ by an arbitrarily small amount. Away from the joins, the interior of the cylinders are still cylinders and so shortening or lengthening one of these slightly will again restore $\int H=0$.

We now do the accountancy and calculate the following contributions to the integrals:
\begin{center}
\begin{tabular}{|| c || c | c ||}
\hline\hline
&$\int Hd\mu$&$\int HK d\mu$\\
\hline\hline
$P_{\rho,l,n}$ &$2n\pi[-(4-\pi)\rho-l]$ & $\geq-6n\pi\rho^{-1}$\\
\hline
$I^{-\frac{\pi}{6}, \frac{\pi}{2}}_{(2n\rho,-l),l}$& $6\pi l+\frac{8}{3}n\pi^2\rho$&positive\\
\hline
$O_{\sqrt{3}}$ &$2n\rho+l-r$&$0$\\
\hline
$A_r$&$2\pi(2-\sqrt{3}) r$&$2\pi(2-\sqrt{3}) r^{-1}$\\
\hline\hline
\end{tabular}
\end{center}
so
\begin{flalign*}
&\int H d\mu = 2\pi[(3+(2\pi)^{-1}-n)l+(\frac{11}{3}\pi+\pi^{-1} -4) n\rho+(2-\sqrt{3}-(2\pi)^{-1})r]\\
&\int HK d\mu \geq 2\pi(2-\sqrt{3}) r^{-1}-6n\pi\rho^{-1}\ .
\end{flalign*}
Therefore we may pick (for example) $\rho=1$, $n\geq 4$ (e.g.\ $n=7$), and choose 
\[l(r)=\frac{(\frac{11}{3}\pi+\pi^{-1} -4) n\rho+(2-\sqrt{3}-(2\pi)^{-1})r}{n-3-(2\pi)^{-1}}\ .\]
Then, by setting $r$ to sufficiently small (compared to $n^{-1}\rho$) we may make $\int HK d\mu$ arbitrarily large. 
\end{description}
\end{example}

\bibliographystyle{plain}
\bibliography{Lit.bib}

\end{document}